\documentclass[a4paper,12pt]{article}

\setlength{\textheight}{592pt}
\setlength{\textwidth}{430pt}

\setcounter{section}{0}

\topmargin=0mm
\textheight=230mm
\oddsidemargin=0mm
\evensidemargin=0mm
\textwidth=160mm

\usepackage{setspace} 
\setstretch{1.3} 

\usepackage{amsmath}
\usepackage{amssymb}
\usepackage{ascmac}
\usepackage{amsthm}
\usepackage{amscd}
\usepackage{color}
\usepackage{enumerate}
\usepackage[dvips]{graphicx}
\usepackage{float}
\usepackage{wrapfig}

  \makeatletter
    
    \@addtoreset{equation}{section}
    \makeatother

\theoremstyle{definition}
\newtheorem{definition}{Definition}[section]
\theoremstyle{plain}
\newtheorem{theorem}[definition]{Theorem}
\newtheorem{corollary}[definition]{Corollary}
\newtheorem{lemma}[definition]{Lemma}
\newtheorem{proposition}[definition]{Proposition}
\theoremstyle{remark}
\newtheorem{remark}[definition]{Remark}
\newcommand{\Do}{\partial\!\!\!/}




\begin{document}

\title{Current algebras on \(S^3\) of complex Lie algebras\\( Revised version )}
\author{Tosiaki Kori 
\\Department of Mathematics\\
Graduate School 
of Science and Engineering\\
Waseda University,\\Tokyo, Japan
\\email{ kori@waseda.jp}}
\date{}
\maketitle

\begin{abstract}
This is a full revised version of the previous same titled article.   The 2-cocycle of the central extension of the current algebra on \(S^3\) is taken the place of a new quarternion valued one, that is defined by the boundary Dirac operator.   And we added a discussion on the symmetric invariant bilinear forms associated to the 
central extension \(\widehat{\mathfrak{g}}\).   \\
A current algebra is a Lie algebra of smooth mappings of a given manifold into a Lie algebra, or its central extension.     Affine Kac-Moody algebra is an example where the manifold is \(S^1\subset \mathbf{C}\setminus\{0\}\).   
    We shall extend this theory to a Lie algebra of smooth mappings of \(S^3\subset \mathbf{C}^2\setminus\{0\}\) into a complex simple Lie algebra \(\mathfrak{g}\).  
    Let  \(\mathcal{L}\,\) be the  \(\mathbf{C}-\)algebra generated by  the Laurent polynomial type harmonic spinors over \(\mathbf{C}^2\setminus \{0\}\).    Here we mean "harmonic" as the zero-mode of the Dirac operator on \(\mathbf{C}^2\).      
  The real part \(\mathcal{K}\) of \(\mathcal{L}\,\)  becomes a commutative real subalgebra of \(\mathcal{L}\,\).  
    For a simple Lie algebra \(\mathfrak{g}\),  the {\it \(\mathfrak{g}\)-current algebra}  is defined as the ( real ) Lie algebra \(\mathcal{L}\mathfrak{g}\)  that is generated by \(\mathcal{L}\otimes_{\mathbf{R}}\mathfrak{g}\).      For the Cartan subalgebra  \(\mathfrak{h}\) of \(\mathfrak{g}\,\),   \(\,\mathcal{K}\mathfrak{h}=\mathcal{K}\otimes_{\mathbf{R}}\mathfrak{h}\) becomes a   Cartan subalgebra of  \(\mathcal{L}\mathfrak{g}\).   We have the weight space decomposition of \(\mathcal{L}\mathfrak{g}\) with respect to the Cartan subalgebra \(\mathcal{K}\mathfrak{h}\).    We introduce a quarternion valued 2-cocycle on the \(\mathfrak{g}\)-current algebra \(\mathcal{L}\mathfrak{g}\). 
   Then we have the central extenstion \(\mathcal{L}\mathfrak{g}\oplus \mathbf{H}{\rm c}\) associated to the 2-cocycle.   Adjoining a derivation 
 coming from the radial vector field \(\frac{\partial}{\partial n}\) on \(S^3\) we obtain the second central extension  \(\,\widehat{\mathfrak{g}}=
 \mathcal{L}\mathfrak{g}\oplus \mathbf{H}{\rm c} \oplus \mathbf{R}{\rm d}\).     We shall investigate the root space decomposition and the Chevalley generators  of \(\,\widehat{\mathfrak{g}}\,\).  
\end{abstract}

Mathematics Subject Classification.    17B65,  17B67, 22E67,  81R10,  81R25,  15B33.\\
{\bf Key Words }    Infinite dimensional Lie algebras,  Current algebras, 
	Kac-Moody Lie algebra, Spinor analysis.

\medskip

\section{Introduction}

 An affine Kac-Moody algebra of untwisted type can be realized in terms of a central extension of the loop algebra of a simple Lie algebra, \cite{K}.     Let \(L=\mathbf{C}[t,t^{-1}]\) be the algebra of Laurent polynomials \(\sum_i\,a_it^i\).   The residue function 
\(Res\,:\,L\,\longrightarrow\,\mathbf{C}\,\) is given by \(Res\,(\sum_i\,a_it^i\,)=a_{-1}\).    Given a simple Lie algebra \(\mathfrak{g}\), then \(L\mathfrak{g}=L\otimes_{\mathbf{C}}\mathfrak{g}\) may be made into a Lie algebra in a unique way satisfying 
\[[P\otimes x,\,Q\otimes y]=PQ\,\left[\, x\,,\,y\,\right] \,,\quad\mbox{ for } P,Q\in L\,,\,x,y\in\mathfrak{g}\,.\]
\(L\mathfrak{g}\) is called the loop algebra of \(\mathfrak{g}\).   Let \( \langle\,\cdot \vert \,\cdot \rangle\) be the invariant bilinear form on \(\mathfrak{g}\).   
We define a bilinear form 
\[
\langle\,\cdot\,\vert\, \cdot\,\,\rangle\,:\,L\mathfrak{g}\times L\mathfrak{g}\,\longrightarrow\,\mathbf{C}[t,t^{-1}]\,\]
by \(\langle\,P\otimes x\,\vert \,Q\otimes y\,\rangle \,=PQ\,\langle\,x\,\vert \,y\,\rangle \,\).   
  We define a 
   2-cocycle \(\kappa\) on the Lie algebra \(L\mathfrak{g}\, \) by the formula 
\[\kappa (P\otimes x\,, \,Q\otimes y)\,=\,Res\,(\frac{dP}{dt}\,Q\,) \, \langle x\vert y\rangle\,.\]
   Let \(L\mathfrak{g} \oplus \mathbf{C}{\rm c}\) be the extension of \(L\mathfrak{g}\) by the 1-dimensional center \(\mathbf{C}{\rm c}\) associated to the 2-cocycle \(\kappa \) whose Lie multiplication is given by 
   \[[\,a+\lambda {\rm c}\,,\,b+\mu{\rm c}\,]\,=\, [\,a\,, b\,]\,+\,\kappa(a,b){\rm c}\,.\]
    The Euler derivation \(t\frac{d}{dt}\) acts on \(L\mathfrak{g} \oplus \mathbf{C}{\rm c} \) as an outer derivation; \(\Delta(a+\lambda {\rm c})=t\frac{da}{dt}\) for \(a\in L\mathfrak{g}\), \(\lambda\in\mathbf{C}\).    
Then  adjoining the derivation \(\Delta\) to \(L\mathfrak{g} \oplus \mathbf{C}{\rm c}\) we have the Lie algebra 
\[
\widehat{\mathfrak{g}}\,=\,L\mathfrak{g} \oplus \mathbf{C}{\rm c}\oplus \mathbf{C}{\rm d}\]
by defining the Lie product as 
\[[\,a+\gamma {\rm c} + \lambda {\rm d}\,,\,b+\delta {\rm c} +\mu {\rm d}\,]\,=\,[\,a\,,\,b\,]+ \kappa(a,b){\rm c} + \lambda\,\Delta(b)\,+\mu\,\Delta(a)\,.\]
We follow this procedure to have a central extension of the current algebra on \(S^3\).    While the Laurent polynomial type harmonic spinors over \(\mathbf{C}^2\setminus \{0\}\) do not constitute an algebra we shall consider the \(\mathbf{C}-\)algebra generated by the Laurent polynomial type harmonic spinors which  we call {\it the algebra of current  over \(S^3\)} and denote by \(\mathcal{L}\,\).   It plays the same role as the algebra \(L=\mathbf{C}[t,\,t^{-1}]\) of Laurent polynomials over \(S^1\) does.     The  {\it current algebra of \(\,\mathfrak{g}\)} is the real Lie algebra \(\mathcal{L}\mathfrak{g}\,\) that is generated by \(\,\mathcal{L}\otimes_{\mathbf{C}}\mathfrak{g}\) .    We shall introduce a 2-cocycle on   \(\mathcal{L}\,\) that takes values in the quarternions \(\mathbf{H}\).    Then it is extended to a 2-cocycle on the current algebra \(\mathcal{L}\mathfrak{g}\).   
For this purpose we prepare in section 2 
 a rather long introduction to our previous results on analysis of harmonic  spinors on \(\mathbf{C}^2\), \cite{ F,G-M, Ko1, Ko2, Ko3} and \cite{K-I}, that is, we develop some parallel results as  in classical analysis;  the separation of variable method for Dirichlet problem, the expansion by  eigenfunctions of Laplacian, Cauchy integral formula for holomorphic functions  and Laurent expansion of meromorphic functions etc..   For example, the Dirac operator on spinors corresponds to the Cauchy-Riemann operator on complex functions.
Let  \(\Delta=\mathbf{H}^2\) be the 4-dimensional spinor space, that is, an irreducible representation of the complex Clifford algebra \(\, {\rm Clif }^c_4=End(\Delta)\).  
The algebraic basis of \({\rm Clif }^c_4\) is given by 
the Dirac matrices:
\(\gamma_k\,=\,\left(\begin{array}{cc}0&-i\sigma_k\\ i\sigma_k&0\end{array}\right)\,,\, k=1,2,3\), and
\(\gamma_4\,=\,\left(\begin{array}{cc}0&-I\\ -I&0\end{array}\right)\,.
\)
Where \(\sigma_k\) are Pauli matrices.     Let \(S= \mathbf{R}^4\times \Delta\) be the spinor bundle.   The Dirac operator is defined by  the following formula:
\begin{equation*}
\mathcal{D}=\,-\,\frac{\partial}{\partial x_1}\gamma_4\,-\,\frac{\partial}{\partial x_2}\gamma_3\,-\,\frac{\partial}{\partial x_3}\gamma_2\,-\,\frac{\partial}{\partial x_4}\gamma_1\,:\,C^{\infty}( \mathbf{R}^4, S)\longrightarrow\,
C^{\infty}( \mathbf{R}^4, S)\,.
\end{equation*}  
Let 
\(S^{\pm} = \mathbf{R}^4\times \Delta^{\pm}\) be the ( even and odd ) half spinor bundle  corresponding to the decomposition \(\Delta=\Delta^+\oplus\Delta^-\): \(\Delta^{\pm}\simeq \mathbf{H}\).    
   The half spinor Dirac operator  $D=\mathcal D\vert S^+$  has the polar decomposition:
\(
D = \gamma_+ \left( \frac{\partial}{\partial n} - \Do \right) 
\) 
with the tangential (nonchiral) component $\Do\,$ and the radial vector \(\frac{\partial}{\partial n}\).   The tangential Dirac operator \(\Do \) on \(S^3\) is a self adjoint elliptic differential operator.     
The eigenvalues of \(\Do\) are  \(\{\frac{m}{2},\,\,-\frac{m+3}{2}\,;\,m=0,1,\cdots \} \) with multiplicity \((m+1)(m+2)\).   We have an explicitly written polynomial formula of eigenspinors \(\left\{ \phi^{+(m,l,k)},\,\phi^{-(m,l,k)}\right\}_{0\leq l\leq m,\,0\leq k\leq m+1}\)  corresponding to the eigenvalues \(\frac{m}{2}\) and  \(-\frac{m+3}{2}\,\) respectively that give rise to a complete orthonormal system in \(L^2(S^3, S^+)\), \cite{Ko1, Ko2}.      A spinor \(\phi\) on a domain \(G\subset \mathbf{C}^2\) is called a {\it harmonic spinor} on \(G\) if \(D\phi=0\).   Each \(\phi^{+(m,l,k)}\) is extended to a harmonic spinor on \(\mathbf{C}^2\), while each \(\phi^{-(m,l,k)}\) is extended to a harmonic spinor on \(\mathbf{C}^2\setminus \{0\}\) that is regular at infinity.    
Every harmonic spinor \(\varphi\) on \(\mathbf{C}^2\setminus \{0\}\) has an expansion by the basis \(\phi^{\pm(m,l,k)}\):
 \begin{equation*}
 \varphi(z)=\sum_{m,l,k}\,C_{+(m,l,k)} \phi^{+(m,l,k)}(z)+\sum_{m,l,k}\,C_{-(m,l,k)}\phi^{-(m,l,k)}(z).
 \end{equation*}
 We call the above series a {\it Laurent polynomial type spinor} if finitely many coefficients \(C_{-(m,l,k)}\) are non-zero, and  the  coefficients 
 \[\left(\begin{array}{c}C_{-(0,0,1)}\\ C_{-(0,0,0)}\end{array}\right)\]
 is called the {\it quarternion residue } of \(\varphi\) and is denoted by \(qRes\,[\varphi ]\).     The space of Laurent polynomial type spinors  is denoted by   \(\mathbf{C}[\phi ^{\pm}] \).      Let \(\mathbf{H}\) be the algebra of quarternion numbers.   
 We look an even spinor also as a \(\mathbf{H}\)-valued smooth function: \(C^{\infty}(S^3,S^+)=C^{\infty}(S^3,\mathbf{H})\), so that 
the space of spinors \(C^{\infty}(S^3, S^+)\) is endowed with a multiplication rule:
  \begin{equation}
  \phi_1\cdot \phi_2\,=\,
   \begin{pmatrix} \,u_1 u _2 - \Bar{v}_1 v_2 \,\\[0.2cm]
\,v_1u _ 2 + \Bar {u} _1v _2 \, \end{pmatrix} \,,  \quad \mbox{ for }\,\phi_i=   \begin{pmatrix} \,u_i  \\[0.2cm]
\,v_i \end{pmatrix},\,i=1,2\,.
\label{multiple}
\end{equation}
 \(\mathbf{C}[\phi ^{\pm}] \) does not constitute an algebra, as we see by  the example \(\phi^{+(1,0,0)}\cdot\phi^{-(0,0,0)}\) which is not a harmonic spinor.     The  {\it algebra of current  \(\, \mathcal{L}\,\) on \(S^3\) } is a \(\mathbf{C}-\)subalgebra of \(C^{\infty}(S^3, S^+)\) ( or rather \(C^{\infty}(\mathbf{C}^2\setminus \{0\}, S^+)\) ) that is generated by  \(\mathbf{C}[\phi ^{\pm}] \) .   In section 3 we introduce a 2-cocycle on \(\mathcal{L}\,\).     For spinors \(\varphi,\, \psi\,\in \mathcal{L} \,\) we put 
\begin{equation}
A(\varphi\,,\,\psi)\,=\,q\,Res\,[\,\Do\varphi\cdot \psi\,-\,\Do\psi\cdot\varphi\,]\,.
\end{equation}
Then \(A\) gives a \(\mathbf{H}-\)valued sqew-symmetric bilinear form on \(\mathcal{L}\) and satisfies the cocycle condition:
\begin{equation}
A(\phi_1\phi_2, \phi_3)+A(\phi_2\phi_3, \phi_1)+A(\phi_3\phi_1, \phi_2)=0\,.\end{equation}
Hitherto we have prepared  the space of spinors \(C^{\infty}(S^3,S^+)\) and the algebra of current \(\mathcal{L}\,\) that will play the role of coefficients of our {\it current algebra } discussed below.   These are {\it complex} algebras.     On the other hand  \(C^{\infty}(S^3,S^+)\simeq C^{\infty}(S^3,\mathbf{H})\) has a \(\mathbf{H}\)-module structure, while our basic interest is on the  real Lie algebra generated by \(\mathcal{L}\otimes_{\mathbf{C}}\mathfrak{g}\,\).    In such a way  it is frequent that we deal with the fields \(\mathbf{H}\),  \(\mathbf{C}\) and \(\mathbf{R}\) in one formula.   
So to prove a steady point of view for our subjects we shall introduce the concept of {\it quartrernion Lie algebras}, \cite{Kq}.      First we note that a quarternion module \(V=\mathbf{H}\otimes_{\mathbf{C}}V_o=V_o+JV_o\,\), \(\,V_o\) being a \(\mathbf{C}\)-module, has two  involutions \(\sigma\) and \(\tau\):
\[\sigma(u+Jv)=u-Jv\,,\quad \tau(u+Jv)=\overline u+J\overline v\,,\quad u,v\in V_o\,.\]
  A {\it quarternion Lie algebra} \(\,\mathfrak{q}\) is defined as a real submodule of a   quarternion module \(V\) that is endowed with a real Lie algebra structure compatible with the involutions \(\sigma\) and \(\tau\):
\begin{eqnarray*}
&\sigma\mathfrak{q}\,\subset\mathfrak{q}\,,\\[0.2cm]
&\sigma [x\,,y]\,=[\sigma x\,,\sigma y]\,,\quad \tau [x\,,y]\,=[\tau x\,,\tau y]\,\quad \mbox{ for }\, x,y\in \mathfrak{q} . \label{involutions}
\end{eqnarray*}
 For a complex Lie algebra \(\mathfrak{g}\) the {\it quarternification} of \(\mathfrak{g}\) is a quarternion Lie algebra \(\mathfrak{g}^q\) that is generated ( as a real Lie algebra ) by  \(\mathbf{H}\otimes_{\mathbf{C}}\mathfrak{g}\).     For example, 
\(\mathfrak{so}^{\ast}(2n)=\mathbf{H}\otimes_{\mathbf{C}}\mathfrak{so}(n,\mathbf{C})\) is the quarternification of \(\mathfrak{so}(n,\mathbf{C})\).     
 \(\mathfrak{sl}(n,\mathbf{H})\) is the quarternification of  \(\mathfrak{sl}(n,\mathbf{C})\) though  \(\mathbf{H}\otimes_{\mathbf{C}}\mathfrak{sl}(n,\mathbf{C})\) is not a Lie algebra.      The algebra of current \(\mathcal{L}\,\) is a quarternion Lie algebra.    In fact \(\mathcal{L}\) is a real submodule of \(C^{\infty}(S^3,\,\mathbf{H})\) that is invariant under the involutions \(\sigma\) and \(\tau\).    The real part  \(\mathcal{K}\,=\{\phi\in\mathcal{L};\,\sigma{\phi}=\phi, \,\tau\phi=\phi\,\}\) plays an important role.   
\(\mathcal{K}\) is a commutative normal subalgebra of \(\mathcal{L}\), and satisfies the condition \([\mathcal{K},\,\mathcal{L}]=0\).
  
   Let \(\mathfrak{g}\) be a simple Lie algebra that we suppose to be a subalgebra of \(\mathfrak{gl}(n,\mathbf{C})\).     Let  \(\mathcal{L}\mathfrak{g}\) be the quarternion Lie algebra generated by   \(\mathcal{L}\otimes_{\mathbf{C}}\mathfrak{g}\) with the Lie bracket defined by
 \begin{equation}
  [\phi_1\otimes X_1\,,\,\phi_2\otimes X_2\,]\,=\,(\phi_1\cdot \phi_2)\otimes (X_1X_2)\,-\,(\phi_2\cdot \phi_1)\otimes (X_2X_1)
  \end{equation}
for \(\phi_1,\,\phi_2\in \mathcal{L},\,X_1,X_2\in\mathfrak{g}\,.\)
Here the right hand side is the bracket of the tensor product of the associative algebra \(\mathcal{L}\) and the matrix algebra \(\mathfrak{g}\).    \(\mathcal{L}\mathfrak{g}\)  is called the  \(\,\mathfrak{g}\)-{\it current algebra}.
Let \(\mathfrak{h}\) be the Cartan subalgebra of \(\mathfrak{g}\).      Let \(\mathcal{K}\mathfrak{h}=\mathcal{K}\otimes_{\mathbf{R}}\mathfrak{h}\).   We find that \(\mathcal{K}\mathfrak{h}\) is a Cartan subalgebra of  \(\mathcal{L}\mathfrak{g}\).   
 It extends the adjoint representation \(ad_{\mathfrak{h}}:\mathfrak{h}\longrightarrow End_{\mathbf{C}}(\mathfrak{g})\) to the  adjoint representation \(ad_{\mathcal{K}\mathfrak{h}}:\mathcal{K}\mathfrak{h}\longrightarrow End_{\mathcal{L}}(\mathcal{L}\mathfrak{g})\).       The associated weight space decomposition of  \(\mathcal{L}\mathfrak{g}\) with respect to  \(\mathcal{K}\mathfrak{h}\) will be given.   We find that the space of non-zero weights of \(\mathcal{L}\mathfrak{g}\) corresponds bijectively to the root space of \(\mathfrak{g}\).    
Let \(\mathfrak{g}_{\lambda}\) be the root space of root \(\lambda\) and let \(\Phi^{\pm}\) be the set of positive (respectively negative ) roots of \(\mathfrak{g}\).  
 Then we have the triangular decomposition of the \(\mathfrak{g}\)-current algebra:
\begin{eqnarray*}
 \mathcal{L}\mathfrak{g}&=&\mathcal{L}\mathfrak{h}\,+\,
\mathcal{L}\mathfrak{e}\,+\,\mathcal{L}\mathfrak{f}\,,\quad\mbox{( direct sum )}\,,
  \\[0.2cm]
with  \qquad \mathcal{L}\mathfrak{e}\,&=&\sum_{\lambda\in\Phi^+}\mathcal{L}\otimes_{\mathbf{R}}\mathfrak{g}_{\lambda},\quad
  \mathcal{L}\mathfrak{f}\,=\sum_{\lambda\in\Phi^-}\mathcal{L}\otimes_{\mathbf{R}}\mathfrak{g}_{\lambda}\,.
  \end{eqnarray*}
     \(\,\mathcal{L}\mathfrak{h}\) has the weight \(0\): \([\mathcal{K}\mathfrak{h},\,  \mathcal{L}\mathfrak{h}\,]=0\,\).    
     
  We discuss in section 5 
our central subject to give the central extension of  \(\mathfrak{g}\)-current algebra.    We extend the \(\mathbf{H}-\)valued 2-cocycle \(\,A\) on \(\mathcal{L}\) to a 2-cocycle on \(\mathcal{L}\mathfrak{g}\,\) by the formula 
\begin{equation}
A(\phi\otimes X,\,\psi\otimes Y)\,=\,(X \vert Y)\,A(\phi,\psi),\quad \phi,\,\psi\in \mathcal{L}\,,\,X,Y\in \mathfrak{g},
\end{equation}
where \((X\vert Y)=Trace(XY)\) is the Killing form of \(\mathfrak{g}\).     Then we have the associated central extension:
 \[\,\mathcal{L}\mathfrak{g}({\rm c})=
\mathcal{L}\mathfrak{g}\,\oplus\,\mathbf{H}{\rm c} ,\]
 which is a quarternion Lie algebra.  The radial vector field \(\frac{\partial}{\partial n}\) on \(\mathbf{C}^2\setminus 0\) acts on \(\mathcal{L}\mathfrak{g}({\rm c})\) as an outer derivation.     Then, adjoining the derivation \(\frac{\partial}{\partial n}\), we have the second central extension: 
\begin{equation*}
\widehat{\mathfrak{g}}=\mathcal{L}\mathfrak{g}({\rm c})\oplus\mathbf{C}{\rm d}.
\end{equation*}
( Actually we adopt the prolonged radial derivation ; \(2\vert z\vert^3\frac{\partial}{\partial n}\) ).
We shall investigate the root space decomposition of \(\,\widehat{\mathfrak{g}}\,\).  For a root \(\alpha\in\Phi\),  let \(\mathfrak{g}_{\alpha}=\{x\in \mathfrak{g};\,[\,h,\,x\,]=\alpha(h)x\,,\,\forall h\in \mathfrak{h}\,\}\) denote the root space of \(\alpha\).   
Put 
\begin{equation*}
\widehat{\mathfrak{h}}\,=\,
 \,\mathfrak{h} \,\oplus  \mathbf{H}{\rm c}\oplus \mathbf{C}\,{\rm d}\,
\end{equation*}
  \(\widehat{\mathfrak{h}}\) is a commutative subalgebra of \(\widehat{\mathfrak{g}}\,\) and   
 \(\,\widehat{\mathfrak{g}}\) is decomposed into a direct sum of the simultaneous eigenspaces of \(ad\,(\hat h)\), \(\,\hat h\in \widehat{\mathfrak{h}}\,\), and  
 $\Phi \subset \mathfrak{h}^{\ast}$ is regarded as a subset of $\,\widehat{\mathfrak{h}}^{\ast}$.    
 
We introduce  \(\Lambda\in \widehat{\mathfrak{h}}^{\ast}; \) as the dual elements of \(c \), and \( \delta\in \widehat{\mathfrak{h}}^{\ast}\)  as the dual element of \(d\).     
Then \(\alpha_1,\,\cdots\,,\alpha_l,\,\delta,\,\Lambda\,\) give a basis of \(\widehat{\mathfrak{h}}^{\ast}\).   
The set of simple root are 
\begin{equation*}
\widehat{\Phi} = \left\{ \frac{m}{2} \delta + \alpha\,;\quad \alpha \in \Phi\,,\,m\in\mathbf{Z}\,\right\} \bigcup \left\{ \frac{m}{2} \delta ;\quad  m\in \mathbf{Z} \, \right\}  \,.
\end{equation*}
\(\widehat{ \mathfrak{g}}\) has the weight space decomposition:
\begin{equation*}
\widehat{ \mathfrak{g}}\,=\, \oplus_{m\in\mathbf{Z} }\, \widehat{ \mathfrak{g}}_{\frac{m}{2}\delta }\,\oplus\,\,\oplus_{\alpha\in \Phi,\, m\in\mathbf{Z} }\, 
\widehat{ \mathfrak{g}}_{\frac{m}{2}\delta +\alpha}\,.
\end{equation*}
Each weight space is given as follows.    
 \begin{eqnarray*}  
\widehat{ \mathfrak{g}}_{\frac{m}{2}\delta + \alpha}\,&=&\mathcal{L}[m] \otimes_{\mathbf{C}} \mathfrak{g}_{ \alpha}\,,\quad\mbox{ for \(\alpha\neq 0\) and and \(m\in \mathbf{Z}\)}, \,,\\[0.2cm] 
\widehat{ \mathfrak{g}}_{0\delta }&=& (\,\mathcal{L}[0] \mathfrak{h}\,)\oplus \mathbf{H}{\rm c}\oplus\,\mathbf{C}{\rm d}\,\supset\,\widehat{\mathfrak{h}}\,, 
\\[0.2cm]
 \widehat{ \mathfrak{g}}_{\frac{m}{2}\delta }&=&  \,\mathcal{L}[m] \otimes_{\mathbf{C}} \mathfrak{h}\,\,, \quad\mbox{for  \(0\neq  m\in\mathbf{Z} \) . }\,
 \end{eqnarray*}
 Where \(\mathcal{L}[m]\) is the subspace of \(\mathcal{L}\) constituting of those elements 
 \(\phi\in \mathcal{L}\,\) that are of homogeneous degree \(m\).   \(\mathcal{L}[0]\mathfrak{h}\) is the Lie subalgebra generated by 
\(\mathcal{L}[0]\otimes_{\mathbf{C}}\mathfrak{h}\).

  
\section{Spinor analysis on $S^3\subset \mathbf{C}^2$.}

Here we prepare a fairly long preliminary of spinor analysis on \(\mathbf{R}^4\) because  I think various subjects belonging to quarternion analysis or detailed properties of harmonic spinors of the Dirac operator on \(\mathbf{R}^4\) are not so familiar to the readers.    
We refer to \cite{F,  Ko1} for the exposition on Dirac operators on \(\mathbf{R}^4\) and to  \cite{D-S-Sc, G-M, Ko2} for the function theory of harmonic spinors. 
Subsections 2.1, 2.2, 2.3 are to remember the theory of harmonic spinors.      

 \subsection{ Spinors and the Dirac operator on \(\mathbf{R}^4\).}

\subsubsection{}   
Let \(\mathbf{K}\) be the field \(\mathbf{R}\) or \(\mathbf{C}\).   
Let \(V\) be a \(\mathbf{K}\)-vector space  equipped with a  quadratic form \(q\) over the field \(\mathbf{K}\).   The Clifford algebra 
\(C_{\mathbf{K}}(V,q)\) is a \(\mathbf{K}\)-algebra which contains \(V\) as a sub-vector space and is generated by 
the elements of \(V\) subject to the relations
\[v_1v_2+v_2v_1=2q(v_1,v_2)\,,\]
for \(v_1,\,v_2\in V\).     
  In the sequel we denote 
 \(
{\rm Clif}_n=C_{\mathbf{R}}(\mathbf{R}^n,\,-x_1^2-\cdots-x_n^2)\) and \({\rm Clif}_{n}^c\,=  C_{\mathbf{C}}(\mathbf{C}^n,z_1^2+\cdots+z_n^2)\).  
It holds \({\rm Clif}_n^c={\rm Clif}_n\otimes_{\mathbf{R}}\mathbf{C}\).   
    We have an important isomorphism:
\begin{equation}\label{reduction}
{\rm Clif}_{n+2}^c={\rm Clif}_n^c\otimes_{\mathbf{C}}\mathbf{C}(2)\, 
\,.
\end{equation}
Here \(\mathbf{K}(m)\) denotes the algebra of \(m\times m\)-matrices with entries in the field \(\,\mathbf{K}\).     Let \(\mathbf{H}\) be the algebra of quarternion numbers that are formed on \(\mathbf{R}\) by the symbols \(i,\,j,\, k\).      
The left multiplication of \(\mathbf{H}\) yields an endomorphism of \(\mathbf{H}\,\); \(\mathbf{H}\simeq End_{\mathbf{H}}\mathbf{H}\simeq \mathbf{C}(2)\).   Then the corresponding matrices to \(i\,,j\,,k\in \mathbf{H}\,\) are given by \( i\sigma_3,\, i\sigma_2,\, i\sigma_1\).   Where \(\sigma_k\,\), \(k=1,2,3\), are 
 Pauli matrices: 
\begin{equation*}
\sigma_1=\left(\begin{array}{cc}
0&1\\[0.2cm] 1&0\end{array}\right)\,,
\, \sigma_2=\left(\begin{array}{cc}
0&-i\\[0.2cm] i&0\end{array}\right)\,,\, 
\sigma_3=
\left(\begin{array}{cc}
1&0\\[0.2cm] 0&-1\end{array}\right)\,.
\end{equation*}
The relations \(\sigma_i^2=-1\), \(i=1,2,3\), and \(\sigma_1\sigma_3+\sigma_3\sigma_1=0\) 
shows that \(\{\sigma_1,\,\sigma_3\}\) generate \({\rm Clif}^c_2\), so that \({\rm Clif}^c_2=\mathbf{H}\).   Let 
\(\Delta=\mathbf{C}^2\otimes_{\mathbf{C}}\mathbf{C}^2\) be the vector space of complex 4-spinors that gives 
the spinor representation of Clifford algebra \({\rm Clif }^c_4\,\):
\[{\rm Clif }^c_4={\rm End} _{\mathbf{C}}(\Delta)=
\mathbf{C}(4)\,.\]
Then   \(\,{\rm Clif }_4^c\) is 
generated by  the following Dirac matrices:
\begin{equation*}
\gamma_k\,=\,\left(\begin{array}{cc}0&-i\sigma_k\\ i\sigma_k&0\end{array}\right)\,,\quad k=1,2,3,\quad
\gamma_4\,=\,\left(\begin{array}{cc}0&-I\\ -I&0\end{array}\right)\,.
\end{equation*}
The set 
\begin{equation}\label{Dmatrices}
\left\{\gamma_p, \quad \gamma_p\gamma_q,\quad \gamma_p\gamma_q\gamma_r,\quad\gamma_p\gamma_q\gamma_r\gamma_s\,;\quad 1\leq p,q,r,s\leq 4\,\right\}
\end{equation}
gives a 16-dimensional basis of the representation \({\rm Clif}^c_4\,\simeq\, {\rm End}_{\mathbf{C}}(\Delta)\,\) with the following relations:
\begin{equation*}
\gamma_p\gamma_q+\gamma_q\gamma_p=2\delta_{pq}\,.
\end{equation*}
The representation \(\Delta\) decomposes into irreducible representations \(\Delta^{\pm}=\mathbf{C}^2\) of \(\,{\rm Spin}(4)\). 

Let 
\(S= \mathbf{C} ^2\times \Delta\) be the trivial spinor bundle on \( \mathbf{C} ^2\).   The corresponding bundle 
\(S^+= \mathbf{C} ^2\times \Delta^+\) ( respectively  \(S^-= \mathbf{C} ^2\times \Delta^-\) )  is called the even ( respectively  odd ) half spinor bundle and the sections are called even ( respectively odd ) spinors.   
On the other hand, since \({\rm Clif}^c_4=\mathbf{H}(2)\otimes_{\mathbf{R}}\mathbf{C}\) and \(\Delta=\mathbf{H}^2=\mathbf{H}\oplus\mathbf{H}\),   we may look an even spinor on \(M\subset \mathbf{R}^4\) as a \( \mathbf{H}\) valued smooth function:
 \(
C^{\infty}(M, \mathbf{H})\,=\, C^{\infty}(M, S^+)\).   We feel free to use the alternative notation to write a spinor:
\begin{equation*}\label{shidentification}
C^{\infty}(M, \mathbf{H})\ni u+jv=p+qi+rj+sk\,
\longleftrightarrow \,\left(\begin{array}{c}u\\v\end{array}\right)\,\in\,C^{\infty}(M, S^+),\quad
\begin{array}{c}u=p+qi\\ v=-r+si\end{array}.
\end{equation*}
We have also the following alternative notation:
\begin{equation}\label{shidentification}
C^{\infty}(M, \mathbf{H})\,\ni\,p+qi+rj+sk\,\longleftrightarrow\,
p\,I+q\,i\sigma_3 \,+ r\,i\sigma_2\,+s\,i\sigma_1\,\in\,C^{\infty}(M, S^+)\,.
\end{equation}
The conjugate quarternion \(\overline x\) of \(x=p+qi+rj+sk\) is defined by 
\(\overline x=p-qi-rj-sk\) and conjugation is an anti-involution; \(\overline{(xy)}=\overline{y}\overline{x}\).   

The multiplication of two even spinors is defined by
\begin{equation}\label{spinormultip}
\phi_1\cdot \phi_2\,=\,
\,\left(\begin{array}{c}u_1u_2-\overline v_1v_2\\  v_1u_2+\overline u_1v_2\end{array}\right)\,\end{equation}
for \(\phi_i=\left(\begin{array}{c}u_i\\ v_i\end{array}\right)\), \(i=1,2\).
It corresponds to the quarternion multiplication:
\[(u_1+jv_1)(u_2+jv_2)=(u_1u_2-\overline v_1v_2)+j( v_1u_2+\overline u_1v_2)\,.\]

\subsubsection{}   

 The Dirac operator is defined by
\begin{equation*}
\mathcal{D} = c \circ d\,:\, C^{\infty}(M, S)\,\longrightarrow\,  C^{\infty}(M, S)\,.
\end{equation*}
where $d : S \rightarrow  T^{*} \mathbf{C}^2\otimes S \simeq  T \mathbf{C}^2\otimes S $ is the covariant derivative which is the exterior differential in this case, 
and $c: T \mathbf{C}^2 \otimes S \rightarrow S$ is the bundle 
homomorphism coming from the Clifford multiplication.   
With respect to the Dirac matrices \(\{\gamma_{j}\}_{j=1,2,3,4}\,\), (\ref{Dmatrices}), the Dirac operator has the expression:
\begin{equation*}
\mathcal{D}=\,-\,\frac{\partial}{\partial x_1}\gamma_4\,-\,\frac{\partial}{\partial x_2}\gamma_3\,-\,\frac{\partial}{\partial x_3}\gamma_2\,-\,\frac{\partial}{\partial x_4}\gamma_1\,.
\end{equation*}
By means of the decomposition $S = S^{+} \oplus S^{-}$ the Dirac operator has 
the chiral decomposition:
\begin{equation*}
\mathcal{D} = 
\begin{pmatrix}
0 & D^{\dagger} \\
D & 0
\end{pmatrix}
: C^{\infty}(\mathbf{C}^2, S^{+} \oplus S^{-}) \rightarrow C^{\infty}(\mathbf{C}^2, S^{+} \oplus S^{-}).
\end{equation*}
If we adopt the notation
\[\frac{\partial}{\partial z_1}=\frac{\partial}{\partial x_1}-i\frac{\partial}{\partial x_2}\,,\quad
\frac{\partial}{\partial z_2}=\frac{\partial}{\partial x_3}-i\frac{\partial}{\partial x_4}\,,\]

 $D$ and \(D^{\dagger}\) have the following coordinate expressions;
\begin{equation*}
D =  \begin{pmatrix} \frac{\partial}{\partial z_1} & - \frac{\partial}{\partial \Bar{z_2}} 
\\ \\ \frac{\partial}{\partial z_2} & \frac{\partial}{\partial \Bar{z_1}} \end{pmatrix} , 
\quad
D^{\dagger} = \begin{pmatrix} \frac{\partial}{\partial \Bar{z_1}} & \frac{\partial}{\partial \Bar{z_2}} 
\\ \\ - \frac{\partial}{\partial z_2} & \frac{\partial}{\partial z_1} \end{pmatrix}.
\end{equation*}

\subsubsection{}
The right action  of \( SU(2)\) on \(\mathbf{C}^2\) is written by 
\[R_gz\,=\,\left(\begin{array}{c}\, az_1-b\overline z_2\, \\ az_2+b\overline  z_1 \end{array}\right),
 \quad g= \left(\begin{array}{cc}a& -\overline b \\ b&\overline a\end{array}\right)\in\, SU(2),\quad z=\left(\begin{array}{c}z_1\\z_2\end{array}\right)\in \mathbf{C}^2.\]
Then the infinitesimal action of \(su(2)\) on \(\mathbf{C}^2\) is   \[((dR_e)X)F=\frac{d}{dt}\vert_{t=0}R_{\exp\,tX}F\,,\quad X\in su(2)\,.\]
It yields the following basis of  vector fields \((\theta_3,\theta_1,\theta_2)\) on 
 $\{|z|= 1\}\simeq S^3$ :
 \begin{equation}
 \theta_1=\frac{1}{2\sqrt{-1}}dR(\sigma_2)\,,\,\, \theta_2=\frac{1}{2\sqrt{-1}}dR(\sigma_1)\,,\,\,\theta_3=-\frac{1}{2\sqrt{-1}}dR(\sigma_3)\,.
 \end{equation}
We prefer often the following basis \((e_+,e_-,\theta)\,\) given by 
\begin{equation}\label{vectbasis}
2\theta_3=-\sqrt{-1}\theta\,, \quad 2\theta_1=e_++e_-\,,\quad 2\theta_2=\sqrt{-1}(e_+-e_-)\,.
\end{equation}
The local coordinate expression of these vector fields becomes:
\begin{eqnarray}\label{bycoordinate}
e_+ &=& -z_2 \frac{\partial}{\partial \Bar{z_1}} +z_1 \frac{\partial}{\partial \Bar{z_2}}\, ,
\quad
e_- = - \Bar{z_2} \frac{\partial}{\partial z_1} + \Bar{z_1} \frac{\partial}{\partial z_2}\,\,,
\\[0.2cm]
\theta &=& \left(z_1 \frac{\partial}{\partial z_1} + z_2 \frac{\partial}{\partial z_2}
- \Bar {z_1} \frac{\partial}{\partial \Bar{z_1}} - \Bar{z_2} \frac{\partial}{\partial \Bar{z_2}}\right)\,,
\end{eqnarray}
and the following commutation relations hold;
\begin{equation*}
[\,\theta , \,e_+\,] \,=\, 2e_+\,,\quad [\,\theta , \,e_-\,] \,=\, -2e_-\,, \quad [\,e_+ ,\,e_-\,]=\,- \theta\,. 
\end{equation*}
The dual basis are given by the differential 1-forms:
\begin{eqnarray*}
\theta_3^{\ast}&=&\frac{1}{2\sqrt{-1}|z|^2}( \overline z_1 d z_1+\overline z_2 dz_2 -z_1 d\overline z_1- z_2 d \overline z_2  ),\\[0.2cm]
\theta_1^{\ast}&=&\frac{1}{2|z|^2}(e_+ ^{\ast}+e_- ^{\ast})\,,\qquad
\theta_2^{\ast}=\frac{1}{2\sqrt{-1}|z|^2}(e_+ ^{\ast}-e_- ^{\ast})\,,
\end{eqnarray*}
where
\begin{equation*}
e_+ ^{\ast}=( -\overline z_2 d\overline z_1+\overline z_1 d\overline z_2 )\,,
\quad
e_- ^{\ast}=( -z_2 d z_1+ z_1 d z_2 )\,,
\end{equation*}
here we wrote the formulae in the form extended to \(\mathbf{C}^2\setminus \{0\}\).     

\noindent \(\theta^{\ast}_k\,\), \(k=1,2, 3\), are real 1-forms:
\(\,
\overline\theta_k^{\ast}=\theta_k^{\ast}\,
\).      It holds that \(\theta_j^{\ast}(\theta_k)=\delta_{jk}\,\) for  \(\,j,k\,=\,1,2,3\,\).  
The 
integrability condition becomes
\begin{equation}\label{integrable}
\frac{\sqrt{-1}}{2}d\theta_3^{\ast}=\theta_1^{\ast}\wedge \theta_2^{\ast}\,,\quad
\frac{\sqrt{-1}}{2}d \theta_1^{\ast}=\theta_2^{\ast}\wedge \theta_3^{\ast}\,,\quad
\frac{\sqrt{-1}}{2}d \theta_2^{\ast}=\theta_3^{\ast}\wedge \theta_1^{\ast}\,.\end{equation}
And 
\(\,\theta_0^{\ast}\wedge \theta_1^{\ast}\wedge \theta_2^{\ast}=d\sigma_{S^3}\,
\) is the volume form on \(S^3\).

\begin{lemma}\label{vanishing}
\begin{equation}\int_{S^3}\,\theta_{k} f\,d\sigma\,=\,0\,,\quad k=1,2,3\,,\end{equation}
for any function \(f\) on \(S^3\). 
\end{lemma}
   This is proved as follows.   We consider the 2-form \(\beta=f\theta_1^{\ast}\wedge\theta_2^{\ast}\).   By virtue of the integrable condition (\ref{integrable}) we have 
\[d\beta=(\theta_3f)\,\theta_3^{\ast}\wedge\theta_1^{\ast}\wedge\theta_2^{\ast}=\theta_3f\,d\sigma\,.\]
Hence 
\[0=\int_{S^3}\,d\beta\,=\,\int_{S^3}\theta_3f\,d\sigma.\]
Similarly for the integral of \(\theta_kf\), \(k=1,2\), of the base vector fields \(\theta_k\,;\,k=1,2\), on \(S^3\). 

On the other hand the lemma is an immediate consequence of the \(SO(4)-\)invariance of the volume form.    This is a remark due to Professor T. Iwai of Kyoto university.

\subsubsection{}

We introduce the normal vector \(\mathbf{n}\) to \(S^3\) and its \((1,0)\)-components \(\nu\) as follows:
\begin{equation}\label{normal}
\mathbf{n}=\,\nu+\overline{\nu}\,\,,\quad \nu=
z_1 \frac{\partial}{\partial z_1} + z_2 \frac{\partial}{\partial z_2}\,.
\end{equation}
Then \(\theta=\nu-\overline{\nu}\) and we have the following commutation relations:
\begin{equation}\label{relation}
\mathbf{ n} \,e_{\pm}\,=\,e_{\pm}\,\mathbf{ n} \,,\quad
\mathbf{ n} \, \theta\,=\,\theta\, \mathbf{ n} \, .
\end{equation}

The normal derivation is defined by
\begin{equation}
\frac{\partial}{\partial n} =\frac{1}{2 |z|}\mathbf{n}\,= \frac{1}{2 |z|}(\nu + \Bar{\nu}), \qquad .\label{normalder}
\end{equation}

We shall denote by \(\gamma\) the Clifford multiplication of the normal vector $\frac{\partial}{\partial n}\,$.   
The multiplication  $\gamma$ changes the chirality:
\(\gamma=
\gamma_+\oplus\gamma_- : S^+ \oplus S^- \longrightarrow S^- \oplus S^+ \), and \(\gamma^2 = 1
\).
The matrix forms of \(\gamma_{\pm}\) are  
\begin{equation}
\gamma_+\,=\,\frac{1}{\vert z\vert}\left(\begin{array}{cc}\overline{z}_1&-z_2\\ \overline{z}_2& z_1\end{array}\right)\,,\,
\quad 
\gamma_-\,=\,\frac{1}{\vert z\vert}\left(\begin{array}{cc}z_1& z_2\\ -\overline{z}_2&\overline{z}_1\end{array}\right)\,.
\end{equation}
\begin{proposition}\cite{Ko1}~~~
The Dirac operators $D$ and \(D^{\dagger}\) have the following polar decompositions:
\begin{eqnarray}\label{tgDirac}
D &= \gamma_+ \left( \frac{\partial}{\partial n} - \Do \right)\,:& \,S^+\longrightarrow\,S^- ,\\[0.2cm]
D^\dagger &= \left( \frac{\partial}{\partial n} + \Do + \frac{3}{2|z|} \right)\gamma_- \,\,:& \, S^-\longrightarrow\,S^+\,,\nonumber
\end{eqnarray}
where the non-chiral Dirac operator $\Do$ is given by
\begin{equation}\label{tgDirac2}
\Do = - \left[ \sum^{3} _{i = 1} \left( \frac{1}{|z|} \theta_i \right) \cdot \nabla_{\frac{1}{|z|} \theta_i} \right]
= \frac{1}{|z|} 
\begin{pmatrix}
-\,i \,\theta_3 & \,\,\theta_1-\,i\theta_2\, \\[0.2cm]
\,-\theta_1-\,i\theta_2\,& \,i\, \theta_3
\end{pmatrix}.
\end{equation}
\end{proposition}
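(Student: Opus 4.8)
The plan is to start from the coordinate expressions of $D$ and $D^{\dagger}$ recorded in Subsection 2.1 and to rewrite the Wirtinger derivatives in the frame $\{\nu,\bar\nu,e_+,e_-\}$ adapted to the fibration $\mathbf{C}^2\setminus\{0\}\to S^3$. First I would invert the two linear systems
\[
\nu=z_1\tfrac{\partial}{\partial z_1}+z_2\tfrac{\partial}{\partial z_2},\qquad e_-=-\bar z_2\tfrac{\partial}{\partial z_1}+\bar z_1\tfrac{\partial}{\partial z_2},
\]
\[
\bar\nu=\bar z_1\tfrac{\partial}{\partial\bar z_1}+\bar z_2\tfrac{\partial}{\partial\bar z_2},\qquad e_+=-z_2\tfrac{\partial}{\partial\bar z_1}+z_1\tfrac{\partial}{\partial\bar z_2}.
\]
Each has Jacobian determinant $|z|^2$, so Cramer's rule yields $\frac{\partial}{\partial z_1}=\frac{1}{|z|^2}(\bar z_1\nu-z_2e_-)$, $\frac{\partial}{\partial z_2}=\frac{1}{|z|^2}(\bar z_2\nu+z_1e_-)$, $\frac{\partial}{\partial\bar z_1}=\frac{1}{|z|^2}(z_1\bar\nu-\bar z_2e_+)$ and $\frac{\partial}{\partial\bar z_2}=\frac{1}{|z|^2}(z_2\bar\nu+\bar z_1e_+)$. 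The single identity that makes the decomposition collapse cleanly is $\theta=\nu-\bar\nu$: combined with $\frac{\partial}{\partial n}=\frac{1}{2|z|}(\nu+\bar\nu)$ it turns the diagonal combinations $\frac12(\nu+\bar\nu)\pm\frac12\theta$ into $\nu$ and $\bar\nu$ respectively.

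Next I would dispose of $D$, which is the clean direction because its chirality factor sits on the left and is never differentiated. Substituting the four formulas into the matrix for $D$ and factoring the coefficient matrix out on the left gives
\[
D=\frac{1}{|z|}\begin{pmatrix}\bar z_1&-z_2\\ \bar z_2&z_1\end{pmatrix}\cdot\frac{1}{|z|}\begin{pmatrix}\nu&-e_+\\ e_-&\bar\nu\end{pmatrix},
\]
and by $\theta=\nu-\bar\nu$ the second factor is exactly $\frac{\partial}{\partial n}-\Do$. It then remains to identify the first factor with $\gamma_+$: expressing $x_1,\dots,x_4$ through $z_1,z_2$ one checks that $\frac{1}{|z|}\left(\begin{smallmatrix}\bar z_1&-z_2\\ \bar z_2&z_1\end{smallmatrix}\right)$ is precisely the $S^+\to S^-$ block of the Clifford multiplication $-\frac{1}{|z|}(x_1\gamma_4+x_2\gamma_3+x_3\gamma_2+x_4\gamma_1)$ of the unit radial vector, and that its product with the $S^-\to S^+$ block $\gamma_-=\frac{1}{|z|}\left(\begin{smallmatrix}z_1&z_2\\ -\bar z_2&\bar z_1\end{smallmatrix}\right)$ is the identity, so that $\gamma^2=1$. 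This establishes the formula for $D$.

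The delicate point, and the one I expect to be the main obstacle, is the formula for $D^{\dagger}$, where the chirality factor $\gamma_-$ must stand on the \emph{right}, hence is applied before the first-order operators $\nu,\bar\nu,e_\pm$. The Leibniz rule then produces extra zeroth-order terms coming from the action of these vector fields on the entries $z_i/|z|$ of $\gamma_-$, and the whole purpose of the summand $\frac{3}{2|z|}$ is to absorb them. Concretely I would expand
\[
\Bigl(\tfrac{\partial}{\partial n}+\Do+\tfrac{3}{2|z|}\Bigr)\gamma_-=\frac{1}{|z|}\begin{pmatrix}\bar\nu+\tfrac32&e_+\\ -e_-&\nu+\tfrac32\end{pmatrix}\cdot\frac{1}{|z|}\begin{pmatrix}z_1&z_2\\ -\bar z_2&\bar z_1\end{pmatrix}
\]
entry by entry, using the elementary facts $\nu(|z|^{-1})=\bar\nu(|z|^{-1})=-\tfrac12|z|^{-1}$, $e_\pm(|z|)=0$, the holomorphy relations $\bar\nu(z_i)=0=\nu(\bar z_i)$, and $\nu(z_i)=z_i$, $\bar\nu(\bar z_i)=\bar z_i$. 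In each entry the constant $+\tfrac32$ combines with the $-\tfrac12$ produced by differentiating $|z|^{-1}$ to leave a net $+1$, which cancels exactly the zeroth-order contribution arising when the off-diagonal operator differentiates the numerator of $\gamma_-$; what survives is precisely the coordinate matrix of $D^{\dagger}$ obtained from the substitution formulas. Matching the two matrices entrywise then finishes the proof.
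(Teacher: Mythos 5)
Your computation is correct: the inversion of the frames $\{\nu,e_-\}$ and $\{\bar\nu,e_+\}$, the identification of $\gamma_\pm$ with the blocks $\frac{1}{|z|}\bigl(\begin{smallmatrix}\bar z_1&-z_2\\ \bar z_2&z_1\end{smallmatrix}\bigr)$ and $\frac{1}{|z|}\bigl(\begin{smallmatrix}z_1&z_2\\ -\bar z_2&\bar z_1\end{smallmatrix}\bigr)$, and the entrywise cancellation of the zeroth--order terms in $\bigl(\frac{\partial}{\partial n}+\Do+\frac{3}{2|z|}\bigr)\gamma_-$ (using $\nu(|z|^{-1})=\bar\nu(|z|^{-1})=-\frac12|z|^{-1}$, $e_\pm(|z|)=0$, and $e_+(\bar z_2)=z_1$, $e_+(\bar z_1)=-z_2$, $e_-(z_1)=-\bar z_2$, $e_-(z_2)=\bar z_1$) all check out, reproducing exactly the coordinate matrices of $D$ and $D^{\dagger}$. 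The paper itself gives no proof of this proposition, citing \cite{Ko1} instead, so there is no internal argument to compare against; your direct verification is the natural one and is complete.
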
 

\(\Do\) restricted is called the {\it tangential } Dirac operator:  
 \begin{equation*}
 \Do | S^3 : C^{\infty} (S^3, S^+) \longrightarrow C^{\infty} (S^3, S^+)
 \end{equation*}
The tangential Dirac operator on \(S^3 \) 
is a self adjoint elliptic differential operator.
 The tangential Dirac operator (\ref{tgDirac2}) is written in the formula:
 \begin{equation*}
 \Do\,=\frac{\sqrt{-1}}{\vert z\vert}(\,-\,\theta_3\,\sigma_3+\theta_1\sigma_2-\,\theta_2\sigma_1\,),
 \end{equation*}
and by the quarternion notation it becomes:
 \begin{equation}
 \Do\,=\frac{1}{\vert z\vert}(-\theta_3\,i+\theta_1 j-\theta_2 k\,).\end{equation}
The following commutation relation holds:
 \begin{equation}\label{DNcommute}
 {\bf n}\,\Do\,=\,\Do\, {\bf n}\,-\,\Do\,.
 \end{equation}
Lemma \ref{vanishing} yields the following
 \begin{proposition}\label{spvanishing}
 \begin{equation}
 \int_{S^3}\,\Do\,\phi\,d\sigma\,=\,0\,,\end{equation}
for any \(\phi\in C^{\infty}(S^3, S^+)\simeq  C^{\infty}(S^3, \mathbf{H})\). 
\end{proposition}

\subsection{Harmonic spinors}

\subsubsection{Harmonic polynomials on \(S^3\subset\mathbf{C}^2\)}

In the following we denote a function $f(z, \Bar{z})$ of variables $z, \Bar{z}\,$ simply by $f(z)$.
  \begin{definition}~~~
   For \(m = 0, 1, 2, \cdots\), and \(\, l\,, k = 0, 1, \cdots , m\),  we define the {\it monomials}:
\begin{eqnarray}
v ^{k} _{(l,m-l)} &=& (e_-)^k z^{l}_{1} z^{m-l}_{2}.\label{v}\\[0.2cm]  
 w^{k} _{(l,m-l)}&=& (-1)^k\frac{l!}{(m-k)!}\,v^{m-l}_{(k,m-k)}\,. \label{wtov}
\end{eqnarray}
\end{definition}
Actually these are polynomials of \(z_1,z_2,\overline{z}_1, \overline{z}_2\) but they play the role of {\it monomials} in our study.   
The monomials  \(v ^{k} _{(l,m-l)} \) in (\ref{v}) come naturally from the right action of \(SU(2)\) on \(\mathbf{C}^2\), so as the monomials 
\( w^{k} _{(l,m-l)}\) from the left action of \(SU(2)\) on \(\mathbf{C}^2\setminus \{0\}\).     \(v ^{k} _{(l,m-l)}\) are harmonic polynomials on $\mathbf{C}^2$;   
\(\,\Delta v ^{k} _{(l,m-l)}=0\,\),  where \(\,\Delta
= \frac{\partial ^2}{\partial z _1 \partial \Bar{z}_1} + \frac{\partial ^2}{\partial z _2 \partial \Bar{z}_2}
\).      It holds that 
\begin{equation}\label{complexconj}
\overline{ v^k_{(l,m-l)}}=(-1)^{m-l-k}\frac{k!}{(m-k)!}v^{m-k}_{(m-l,l)}\,.
\end{equation}

 In \cite{Ko0,Ko1} we saw that  
  \(\left\{\,\frac{1}{\sqrt{2}\pi}
  \sqrt{\frac{(m+1)(m-k)}{l!(m-l)!k!}}v ^{k} _{(l,m-l)}\, ;  \,m = 0, 1, \cdots,\,  0\leq k,l\leq m\,\right\} \)  forms a \(L^2(S^3)\)-complete orthonormal system of the space of harmonic polynomials.    
The similar assertions hold for   \(\left\{\,\frac{1}{\sqrt{2}\pi}\sqrt{\frac{(m+1)(m-k)}{l!(m-l)!k!}}w ^{k} _{(l,m-l)}\, ;  \,m = 0, 1, \cdots,\,  0\leq k,l\leq m\,\right\} \).

Let \(H\) be the space of harmonic polynomials on \(S^3\).   For each pair \((m,l)\), \(0\leq l\leq m\),  let 
  \(H_{(m,l)}\) be the linear subspace generated by the vectors \(\{ v ^{k} _{(l,m-l)}\,; 0\leq k\leq m+1\}\,\).   
  \(H\) is the direct sum of \( H_{(m,l)}\), \(0\leq m,\,0\leq l\leq m\), and each \( H_{(m,l)}\) gives a \((m+1)\)-dimensional right representation of \(su(2)\) with the highest weight \(\frac m2\).    Similarly 
 the subspace  \(H^{\dag}_{(m,l)}=\{w^{k} _{(l,m-l)}\,; 0\leq k\leq m+1\}\) gives a \((m+1)\)-dimensional left representation of \(su(2)\) with the highest weight \(\frac m2\).

In Lemma 4.1 of \cite{Ko0} we proved the  following product formula for the harmonic polynomials \(v^k_{(a.b)}\,\):   
\begin{proposition}\label{productformula}
\begin{equation}\label{multiv}
v ^{k_1} _{(a_1,b_1)} v ^{k_2} _{(a_2,b_2)}=\sum_{j=0}^{a_1+a_2+b_1+b_2}C_j\vert z\vert^{2j}\,v ^{k_1+k_2-j} _{(a_1+a_2-j,\,b_1+b_2-j)} \, ,
\end{equation}
for some rational numbers  \(C_j=C_j(a_1,a_2,b_1,b_2,k_1,k_2)\).     
\end{proposition}

\begin{proposition}~~~\\
The space of harmonic polynomials \(H\) on \(S^3\) is given a graded \(\mathbf{C}\)-algebra structure.  
\end{proposition}
In fact, let \(k=k_1+k_2\), \(a=a_1+a_2\) and \(b=b_1+b_2\).   Restricted to \(S^3\), the harmonic  polynomial \(v^k_{(a,b)}\) is equal to  a constant multiple of 
\(\,v^{k_1}_{(a_1,b_1)}\cdot v^{k_2}_{(a_2,b_2)}\) modulo a linear combination of polynomials \(v^{k-j}_{(a-j,b-j)}\,\), \(1\leq j\leq min(k,a,b)\).   So the set of harmonic polynomials becomes a graded \(\mathbf{C}\)-algebra.

\subsubsection{Harmonic spinors on \(S^3\subset\mathbf{C}^2\)}

Now we introduce a basis of the space of even harmonic spinors.   
\begin{definition}~~~
For $m = 0,1,2, \cdots ; l = 0,1, \cdots , m$ and $k=0,1, \cdots , m+1$, we put 
\begin{eqnarray}\label{basespinor}
\phi^{+(m,l,k)} (z) &=& \sqrt{\frac{(m+1-k)!}{k!l!(m-l)!}} \begin{pmatrix} k v^{k-1} _{(l, m-l)}\\ \\ -v^{k}_{(l, m-l)} \end{pmatrix}\nonumber,\\ \notag \\
\phi^{-(m,l,k)} (z) &=& \sqrt{ \frac{(m+1-k)!}{k!l!(m-l)!}} \left(\frac{1}{\vert z\vert^2}\right)^{m+2}\begin{pmatrix} w^{k} _{(m+1-l,l)}\\ \\ w^{k}_{(m-l,l+1)} \end{pmatrix}.
\end{eqnarray}
\end{definition}

{\bf Examples}

\begin{eqnarray*}
\phi^{+(0,0,1)}&=\left( \begin{array}{c}1\\0 \end{array} \right) \,,\quad
 \phi^{+(0,0,0)}&=\left( \begin{array}{c}0\\-1 \end{array} \right)\,,\quad
\phi^{+(1,0,0)}=\left(\begin{array}{c}0\\-\sqrt{2}\,z_2  \end{array} \right)\,\\[0.2cm]
 \phi^{+(1,0,1)}&=\left(\begin{array}{c}z_2\\ -\overline z_1 \end{array} \right)\,,\quad 
\phi^{+(1,1,1)}&=\left(\begin{array}{c}z_1\\ \overline z_2 \end{array}\right)\,,\quad
 \phi^{+(2,0,0)}=\left(\begin{array}{c}0\\-\sqrt{3}\,z_2^2 \end{array}\right)\,.
 \end{eqnarray*}
 \begin{eqnarray*}
 \phi^{-(0,0,0)}&=\frac{1}{|z|^4}\left( \begin{array}{c}z_2\\\overline{z}_1 \end{array} \right) \,,\quad
 \phi^{-(0,0,1)}&=\frac{1}{|z|^4}\left( \begin{array}{c}-z_1\\\overline{z}_2 \end{array} \right)\,,\,\\[0.2cm]
 \phi^{-(1,0,0)}&=\frac{\sqrt{2}}{|z|^6}\left(\begin{array}{c}z_2^2\\ 2z_2\overline{z}_1  \end{array} \right)\,,\,
 \phi^{-(1,0,1)}&=\frac{-2}{|z|^6}\left(\begin{array}{c}z_1z_2\\  |z_1|^2-|z_2|^2 \end{array} \right)\,
.
\end{eqnarray*}

\begin{proposition}\cite{Ko1}~~~
\begin{enumerate}
\item
\(\phi^{+(m,l,k)}\)  is a harmonic spinor on \(\mathbf{C}^2\) and  \(\phi^{-(m,l,k)}\) is a harmonic spinor on \(\mathbf{C}^2 \backslash \{0\}\) that is regular  at infinity.   
\item
On $S^3 = \{|z| = 1\}$ we have:
 \begin{equation*}
 \Do \phi^{+(m,l,k)} =\frac{m}{2}\, \phi^{+(m,l,k)} \,,\qquad 
 \Do \phi^{-(m,l,k)} = -\frac{m+3}{2}\, \phi^{-(m,l,k)} \, .
 \end{equation*}
 \item
The eigenvalues of $\,\Do$ are 
 \begin{equation*}
 \frac{m}{2} \,,\quad - \frac{m+3}{2} \, ; \quad  m = 0, 1, \cdots,
 \end{equation*}
and the multiplicity of each eigenvalue is equal to $(m+1)(m+2)$.
\item
The set of eigenspinors
 \begin{equation*}
 \left\{ \frac{1}{\sqrt{2}\pi }\phi^{+(m,l,k)}, \quad \frac{1}{\sqrt{2}\pi }\phi^{-(m,l,k)} \,
 ; \quad m = 0, 1, \cdots , \,  0\leq l\leq  m,\, 0\leq k\leq m+1\right\}
 \end{equation*}
forms a complete orthonormal system of $L^2 (S^3, S^+)$.
\end{enumerate}
\end{proposition}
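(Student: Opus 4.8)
The plan is to prove the eigenvalue equations (2) first, since both the harmonicity (1) and the multiplicity statement (3) will follow from them once (4) is available, and to obtain (4) from the Peter--Weyl theorem on \(S^3\simeq SU(2)\). I would compute \(\Do\phi^{+(m,l,k)}\) directly from the matrix form \(\Do=\frac{1}{|z|}\left(\begin{smallmatrix}-\tfrac{1}{2}\theta & e_+\\ -e_- & \tfrac{1}{2}\theta\end{smallmatrix}\right)\). Dropping the normalizing scalar, \(\phi^{+(m,l,k)}\) has top component \(kv^{k-1}_{(l,m-l)}\) and bottom component \(-v^{k}_{(l,m-l)}\), so the equation \(\Do\phi^{+(m,l,k)}=\tfrac{m}{2}\phi^{+(m,l,k)}\) reduces to the two scalar identities \(e_+v^{k}_{(l,m-l)}=-k(m-k+1)v^{k-1}_{(l,m-l)}\) and \(e_-v^{k-1}_{(l,m-l)}=v^{k}_{(l,m-l)}\), together with \(\theta v^{k}_{(l,m-l)}=(m-2k)v^{k}_{(l,m-l)}\). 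The last two are immediate from \(v^{k}_{(l,m-l)}=(e_-)^{k}z_1^{l}z_2^{m-l}\) and \([\theta,e_-]=-2e_-\), while the first is the standard \(su(2)\) ladder relation obtained by induction from \([e_+,e_-]=-\theta\), starting from the highest-weight fact \(e_+z_1^{l}z_2^{m-l}=0\) (which holds because \(z_1^{l}z_2^{m-l}\) is holomorphic and \(e_+\) carries only \(\bar z\)-derivatives); the normalizing scalar cancels throughout. The equation for \(\phi^{-(m,l,k)}\) is entirely parallel: one uses the left \(su(2)\)-module structure of the \(w^{k}_{(l,m-l)}\) and the fact that \(\theta\) and \(e_\pm\) annihilate every radial factor \(|z|^{2s}\) (so they pass through \(|z|^{-2(m+2)}\)), and the same bookkeeping produces the eigenvalue \(-\tfrac{m+3}{2}\).

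Statement (1) then costs almost nothing. Each \(\phi^{+(m,l,k)}\) is homogeneous of total degree \(m\), so by the Euler relation \(\frac{\partial}{\partial n}\phi^{+(m,l,k)}=\frac{m}{2|z|}\phi^{+(m,l,k)}\), and the algebraic identity above holds verbatim on all of \(\mathbf{C}^2\setminus\{0\}\), giving \(\Do\phi^{+(m,l,k)}=\frac{m}{2|z|}\phi^{+(m,l,k)}\) there. The polar decomposition \(D=\gamma_+\bigl(\frac{\partial}{\partial n}-\Do\bigr)\) then forces \(D\phi^{+(m,l,k)}=0\) off the origin, hence identically since the components are polynomials. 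For \(\phi^{-(m,l,k)}\) the two computations again match: the \(w\)'s contribute degree \(m+1\) and the factor \(|z|^{-2(m+2)}\) degree \(-2(m+2)\), so \(\phi^{-(m,l,k)}\) is homogeneous of degree \(-(m+3)\); the Euler relation and the eigenvalue identity cancel in the polar decomposition, giving a harmonic spinor on \(\mathbf{C}^2\setminus\{0\}\) whose negative homogeneity is exactly the decay that makes it regular at infinity.

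For (4), orthogonality between the \(+\) and \(-\) families, and between different values of \(m\), is automatic because \(\Do\) is self-adjoint and these carry distinct eigenvalues. The remaining relations among fixed-\(m\) spinors and the norm evaluations reduce to integrals over \(S^3\simeq SU(2)\) of products of the \(v^{k}_{(l,m-l)}\), which are matrix coefficients of the spin-\(\tfrac{m}{2}\) representation; the Schur orthogonality relations supply both the orthogonality and the norms, and the factor \(\sqrt{(m+1-k)!/(k!\,l!\,(m-l)!)}\) together with \(\tfrac{1}{\sqrt{2}\pi}\) is precisely the normalization they demand. Completeness is then the Peter--Weyl theorem, which makes the span of all matrix coefficients dense in \(L^2(S^3)\) and hence the associated spinors dense in \(L^2(S^3,S^+)=L^2(S^3)\otimes\mathbf{C}^2\). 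Finally (3) follows: the numbers \(\tfrac{m}{2}\) and \(-\tfrac{m+3}{2}\) (for \(m\ge 0\)) are pairwise distinct, and for each the indices \(0\le l\le m\), \(0\le k\le m+1\) give exactly \((m+1)(m+2)\) eigenspinors, which by the orthonormality just established form a basis of the corresponding eigenspace.

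I expect the real work to lie in the last step: identifying the \(v^{k}_{(l,m-l)}\) with \(SU(2)\) matrix coefficients cleanly enough that Schur orthogonality pins down the constants, and then invoking Peter--Weyl for completeness rather than re-deriving the spectral theory of \(\Do\) by hand. The eigenvalue equations and the harmonicity are essentially bookkeeping once the \(su(2)\) ladder relations are in place.
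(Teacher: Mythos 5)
The paper itself offers no proof of this proposition: it is imported verbatim from \cite{Ko1}, so there is nothing internal to compare your argument against. Taken on its own terms, your reconstruction is correct and follows the natural route. The reduction of \(\Do\phi^{+(m,l,k)}=\frac{m}{2}\phi^{+(m,l,k)}\) to the three scalar identities \(e_-v^{k-1}=v^{k}\), \(\theta v^{k}=(m-2k)v^{k}\) and \(e_+v^{k}=-k(m-k+1)v^{k-1}\) is exactly right (I checked the bookkeeping: the top row gives \(-\tfrac{1}{2}k(m-2k+2)v^{k-1}-e_+v^{k}=\tfrac{m}{2}kv^{k-1}\), which is the stated ladder relation, and the induction from \(e_+z_1^{l}z_2^{m-l}=0\) via \([e_+,e_-]=-\theta\) closes), and the homogeneity-plus-polar-decomposition argument for harmonicity is the standard one, using that \(\theta\) and \(e_\pm\) annihilate radial factors so the eigenvalue identity propagates off the sphere with the \(1/|z|\) prefactor matching the Euler relation. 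The one place I would ask you to tighten is the passage from Peter--Weyl to completeness of the \emph{spinor} system: density of matrix coefficients gives \(L^2(S^3,S^+)=\oplus_m H_m\otimes\mathbf{C}^2\) with \(\dim H_m=(m+1)^2\), and you still need to observe that the degree-\(m\) block is spanned by the \(\phi^{+(m,l,k)}\) together with the \(\phi^{-(m-1,l,k)}\) (whose restrictions to \(S^3\) are degree-\(m\) harmonics); the count \((m+1)(m+2)+m(m+1)=2(m+1)^2\) plus the orthogonality you already have makes this exact. Without that dimension count, assertion (3) --- that the multiplicities are \emph{exactly} \((m+1)(m+2)\) and that there are no further eigenvalues --- is not yet justified. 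With it, everything you wrote goes through.
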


\begin{remark}~~~
On \(\mathbf{C}^2\setminus\{0\}\) we have 
\[\Do\phi^{+(m,l,k)}=\frac{m}{2\vert z\vert}\,\phi^{+(m,l,k)}\,,\quad 
\Do\phi^{-(m,l,k)}=\,-\frac{m+3}{2\vert z\vert}\,\phi^{-(m,l,k)}\,.\]
\end{remark}

The Bergman kernel on the space of harmonic spinors is associated to the basis spinors \(\left\{\,\phi^{+(m,l,k)}\,,\,\phi^{-(m,l,k)}\,\right\}\).    It is given by the following formula:
\begin{equation*}
B(\,z\,,\,\zeta\,)\,=\, \,\frac{1}{2\pi^2}\,
 \sum_{m,l,k}\,\phi^{+(m,l,k)}(\zeta)\otimes \phi^{+(m,l,k)}(z)\,+\phi^{-(m,l,k)}(\zeta)\otimes \phi^{-(m,l,k)}(z)\,,
 \end{equation*}
for \( z\in G,\, \zeta\in \partial G\).  

The Cauchy kernel ( fundamental solution )  of the half Dirac operator \(D: C^{\infty}(\mathbf{C}^2,\,S^+)\longrightarrow 
 C^{\infty}(\mathbf{C}^2,\,S^-)\) is given by 
\begin{equation*}
K^{\dag}(z,\zeta)=\frac{1}{\vert\zeta-z\vert^3}\gamma_-(\zeta-z)\,: C^{\infty}(\mathbf{C}^2,\,S^-)\longrightarrow 
C^{\infty}(\mathbf{C}^2,\,S^+)
,\quad |z|< |\zeta|\,.
\end{equation*}

 In \cite{Ko2} we showed that the the Bergman kernel coincides with the Cauchy kernel:
  \begin{equation*}
 K^{\dag}(z,\zeta)\,=\,B(\,z\,,\,\zeta\,)\quad\mbox{ for \( z\in G,\, \zeta\in \partial G\)}
 \end{equation*}
 for any bounded domain \(G\subset\mathbf{C}^2\).

We have the following integral representation of spinors:
\begin{theorem}\cite{Ko1}~~~
Let \(G\) be a domain of \(\mathbf{C}^2\) and let \(\varphi\in C^{\infty}(\overline G,\,S^+)\).   
We have 
\begin{equation*}
\varphi(z)=-\frac{1}{2\pi^2}\int_G\,K^{\dag}(z,\zeta)D\varphi(\zeta)dv(\zeta)+\frac{1}{2\pi^2}\int_{\partial G}\,K^{\dag}(z,\zeta)(\gamma_+\varphi)(\zeta)d\sigma(\zeta)\,,\quad z\in G.
\end{equation*}
\end{theorem}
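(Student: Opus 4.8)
The statement is the four-dimensional spinor analogue of the Cauchy--Pompeiu (Borel--Pompeiu) formula, and the plan is to derive it from Stokes' theorem on a punctured domain combined with the fundamental-solution property of the Cauchy kernel $K^{\dagger}$. Two preliminary facts must be isolated first. The first is that, for fixed $z$, the map $\zeta\mapsto K^{\dagger}(z,\zeta)=|\zeta-z|^{-3}\gamma_-(\zeta-z)$ is a harmonic spinor on $\mathbf{C}^2\setminus\{z\}$, i.e. it is annihilated in the variable $\zeta$ by the Dirac operator acting from the appropriate side. I would verify this directly from the coordinate/polar expressions of $D$ and $D^{\dagger}$, using that $K^{\dagger}$ is essentially a first-order Dirac derivative of the Newtonian kernel $|\zeta-z|^{-2}$, which is the fundamental solution of the Laplacian $\Delta$ on $\mathbf{R}^4$ normalised by $\mathrm{vol}(S^3)=2\pi^2$; since $D^{\dagger}D$ and $DD^{\dagger}$ are Laplacians on half-spinors, harmonicity of $|\zeta-z|^{-2}$ away from the pole yields the claim, and also pins down that $K^{\dagger}(z,\cdot)$ is homogeneous of degree $-3$ about $z$, the homogeneity proper to a fundamental solution of a first-order operator in $\mathbf{R}^4$. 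The second fact is the Clifford--Stokes identity: writing $d\sigma^{\ast}(\zeta)=\gamma(\nu_\zeta)\,d\sigma(\zeta)$ for the Clifford-valued surface element built from multiplication by the outward unit normal $\nu$, one has, for any even spinor $\varphi$ and any domain $\Omega$, the Green-type formula $\int_{\partial\Omega}K^{\dagger}(z,\zeta)\,d\sigma^{\ast}(\zeta)\,\varphi(\zeta)=\int_{\Omega}\bigl[(K^{\dagger}D')(z,\zeta)\,\varphi(\zeta)+K^{\dagger}(z,\zeta)(D\varphi)(\zeta)\bigr]\,dv(\zeta)$, where $D'$ denotes $D$ acting on the kernel from the right. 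This is the integrated Leibniz rule for the Dirac operator and follows by applying the ordinary Stokes theorem to the Clifford-valued $3$-form $K^{\dagger}\,d\sigma^{\ast}\,\varphi$.

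The two facts combine as follows. On the punctured domain $G_{\varepsilon}=G\setminus\overline{B_{\varepsilon}(z)}$ the kernel $K^{\dagger}(z,\cdot)$ is smooth and, by the first fact, the term $(K^{\dagger}D')\varphi$ in the Green formula vanishes. Hence $\int_{G_{\varepsilon}}K^{\dagger}(z,\zeta)(D\varphi)(\zeta)\,dv(\zeta)=\int_{\partial G_{\varepsilon}}K^{\dagger}(z,\zeta)\,d\sigma^{\ast}(\zeta)\,\varphi(\zeta)$, and since $\partial G_{\varepsilon}=\partial G\cup(-\partial B_{\varepsilon}(z))$ the right-hand side splits into an integral over $\partial G$, which is exactly the boundary term of the assertion once $\gamma(\nu)$ is identified with $\gamma_+$ (this identification is literal when $\partial G$ is a sphere centred at the origin, where the normal is radial), and an integral over the inner sphere carrying the opposite orientation.

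It then remains to let $\varepsilon\to0$, and this is the crux. The volume integral over $G_{\varepsilon}$ converges to an absolutely convergent integral over $G$: by the degree $-3$ homogeneity the integrand is $O(|\zeta-z|^{-3})$, which against the four-dimensional volume element $\sim r^{3}\,dr$ is integrable near $z$, so dominated convergence applies. The main obstacle is the inner-sphere integral, which must be shown to tend to $\varphi(z)$. Parametrising $\partial B_{\varepsilon}(z)$ by the unit $S^3$ via $\zeta=z+\varepsilon\xi$ gives $d\sigma=\varepsilon^{3}\,d\omega(\xi)$ and, by homogeneity, $K^{\dagger}(z,z+\varepsilon\xi)=\varepsilon^{-3}K^{\dagger}(z,z+\xi)$, so the powers of $\varepsilon$ cancel; continuity of $\varphi$ at $z$ then lets one replace $\varphi(z+\varepsilon\xi)$ by $\varphi(z)$, leaving the constant matrix integral $\frac{1}{2\pi^2}\int_{S^3}K^{\dagger}(z,z+\xi)\,\gamma(\xi)\,d\omega(\xi)$. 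This must evaluate to $\mathrm{id}$, which is precisely the normalisation encoded in $2\pi^2=\mathrm{vol}(S^3)$ together with the relation $\gamma_+\gamma_-=\mathrm{id}$ on half-spinors; carrying out this evaluation, and tracking the reversed orientation of the inner sphere so that it contributes $-\varphi(z)$ after normalisation, produces the stated signs $-\tfrac{1}{2\pi^2}$ on the volume term and $+\tfrac{1}{2\pi^2}$ on the boundary term once $\varphi(z)$ is moved to the left. I expect the only genuinely delicate points to be this last normalisation computation and the careful bookkeeping of orientations and of the side on which $D$ acts on the kernel.
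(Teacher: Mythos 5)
Your proposal is correct and is exactly the standard Borel--Pompeiu argument of Clifford analysis (Stokes on the punctured domain, left-monogenicity of \(K^{\dagger}(z,\cdot)\) away from the pole, absolute convergence of the volume term by the \(O(|\zeta-z|^{-3})\) bound, and the small-sphere limit normalised by \(\mathrm{vol}(S^3)=2\pi^2\)); the paper itself gives no proof, citing [Ko1], and that reference proceeds along the same lines. The one point you rightly flag — that \(\gamma_+\) in the boundary term must be read as Clifford multiplication by the outward unit normal of \(\partial G\) rather than the radial vector when \(G\) is not a ball centred at the origin — is a notational matter, not a gap in your argument.
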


\section{ Laurent polynomial type harmonic spinors  on\\  \(\mathbf{C}^2\setminus\{0\}\) and the associated 2-cocycle}

\subsection{Algebra of  Laurent polynomial type harmonic spinors on \(\mathbf{C}^2\setminus\{0\}\)
}

\subsubsection{ Quarternion trace and Quarternion residue}
  
We have the Laurent expansions of harmonic spinors, that is, 
a harmonic spinor \(\varphi\) on 
 \(\mathbf{C}^2\setminus\{0\}\) has an expansion by the basic spinors \(\{\,\phi^{\pm(m,l,k)}\}_{m,l,k}\,\):
 \begin{equation}
 \varphi(z)=\sum_{m,l,k}\,C_{+(m,l,k)}\phi^{+(m,l,k)}(z)+\sum_{m,l,k}\,C_{-(m,l,k)}\phi^{-(m,l,k)}(z),\label{Laurentspinor}
 \end{equation}
which is uniformly convergent on any compact subset of \(\mathbf{C}^2\setminus\{0\}\), \cite{Ko2}. 
The coefficients \(C_{\pm(m,l,k)}\) are given by the following formula:
\begin{equation}\label{coefficient}
C_{\pm(m,l,k)}=\,\frac{1}{2\pi^2}\int _{S^3}\, \langle \varphi,\,\phi^{\pm(m,l,k)}\rangle\,d\sigma,
\end{equation}
where \(\langle\,,\,\rangle\) is the inner product of \(S^+\).    

\begin{definition}
We call the series (\ref{Laurentspinor})  {\it a spinor of  Laurent polynomial type} on \(\mathbf{C}^2\setminus \{0\}\) if only finitely many coefficients \(C_{- (m,l,k)}\)  are non-zero .   The space of spinors of Laurent polynomial type is denoted by   \(\mathbf{C}[\phi ^{\pm}] \).  
\end{definition} 

The {\it quarternion trace} of \(\varphi\) is defined by 
\begin{equation}\label{qtrace1}
qTr\,\varphi\,=\,\left(\begin{array}{c}C_{+(0,0,1)}\\ -C_{+(0,0,0)}\end{array}\right)\,.
\end{equation}
By the quarternion notation it becomes 
\[qTr\,\varphi\,=\,C_{+(0,0,1)}\,+\,C_{+(0,0,0)}\,j\,.\]
We have 
\begin{equation}
qTr\,\varphi\,=\,\frac{1}{2\pi^2} \int_{\vert\zeta\vert=1}\varphi(\zeta)\sigma(d\zeta).
\end{equation}

\begin{proposition}\label{qtrace2}
\begin{equation}
q\,Tr\,(\phi_1\cdot\,\phi_2)\,=\,q\,Tr\,(\phi_2\cdot\,\phi_1)\,.
\end{equation}
\end{proposition}
In fact, for
\[\phi_i(z) = \sum_{m,l,k}\,C^{(i)}_{+(m,l,k)}\phi^{+(m,l,k)}(z)+\sum_{m,l,k}\,C^{(i)}_{-(m,l,k)}\phi^{-(m,l,k)}(z)\,,\quad i=1,2,\]
we have 
\[ q\,Tr( \phi_1\cdot\phi_2)=\,
\sum_m\,C^{(1)}_{+(m+3,\,,\,)}C^{(2)}_{-(m,\,,\,)}+
C^{(2)}_{+(m+3),\,,\,)}C^{(1)}_{-(m,\,,\,)}\,=\,q\,Tr( \phi_2\cdot\phi_1). \]

\begin{definition}\label{qres}
Let \(\varphi\) be a Laurent polynomial type spinor (\ref{Laurentspinor}).
We call the vector  \(\left(\begin{array}{c}C_{-(0,0,1)}\\-C_{-(0,0,0)}\end{array}\right)\)  the {\it quarternion residue of \(\,\varphi\) at \(z=0\) } and we denote it by \(qRes\,\varphi\).
\end{definition}
By the quarternion notation it becomes 
\[qRes\varphi\,=\,C_{-(0,0,1)}\,+\,C_{-(0,0,0)}\,j\,.\]
Since \(\phi^{-(m,l,k)}(z)\sim\,O(\vert z\vert^{-m-3})\),   \(\,qRes\,\varphi\)  is the coefficient of \(O(\vert z\vert^{-3})\) in the expansion of \(\varphi\).    We have the following 
\begin{proposition}\label{intrepres}
\begin{equation}
qRes\,\varphi=\frac{1}{2\pi^2}\int_{\vert\zeta\vert=1}\gamma_+(\zeta)\varphi(\zeta)\sigma(d\zeta).
\end{equation}
\end{proposition}
\begin{proof}
From (\ref{Laurentspinor}) and (\ref{coefficient}) we have
\begin{equation}
 \varphi(z)=\sum_{m,l,k}\,C_{+(m,l,k)}\phi^{+(m,l,k)}(z)+\sum_{m,l,k}\,C_{-(m,l,k)}\phi^{-(m,l,k)}(z),
 \end{equation}
with 
\begin{equation*}
C_{-(m,l,k)}=
\frac{1}{2\pi^2}\int_{\vert z\vert=1}\langle \varphi(z),\,\phi^{-(m,l,k)}(z)\rangle \sigma(dz).
\end{equation*}
Then 
\begin{eqnarray*}
C_{-(0,0,0)}&=&
\frac{1}{2\pi^2}\int_{\vert z\vert=1}
\langle (\gamma_+\varphi)(z),\,(\gamma_+\phi^{-(0,0,0)})(z)\rangle \sigma(dz)\\
&=&\frac{1}{2\pi^2}\int_{\vert z\vert=1} (\gamma_+\varphi )_2(z)\sigma(dz),
\end{eqnarray*}
since \(\gamma_+\varphi^{-(0,0,0)}=\left(\begin{array}{c}0\\1\end{array}\right)\), where \((\phi)_2\) is the 2nd component of \(\phi\).   Similarly we have 
\[
-\,C_{-(0,0,1)}
=\frac{1}{2\pi^2}\int_{\vert z\vert=1} (\gamma_+\varphi )_1(z)\sigma(dz).\]
\end{proof}

\begin{proposition}\label{resofproduct}
\begin{equation}
qRes\,(\phi_1\cdot\,\phi_2)\,=\,qRes\,(\phi_2\cdot\,\phi_1)\,.
\end{equation}
\end{proposition}
In fact, for
\[\phi_i(z) = \sum_{m,l,k}\,C^{(i)}_{+(m,l,k)}\phi^{+(m,l,k)}(z)+\sum_{m,l,k}\,C^{(i)}_{-(m,l,k)}\phi^{-(m,l,k)}(z)\,,\quad i=1,2,\]
the coefficient of the 
\(O(|z|^{-3})\)- terms in \(\phi_1\cdot\phi_2\)  is  
\[ qRes ( \phi_1\cdot\phi_2)=\,
\sum_m\,C^{(1)}_{+(m,\,\,)}C^{(2)}_{-(m,\,\,)}+
C^{(2)}_{+(m,\,\,)}C^{(1)}_{-(m,\,\,)}\,=\,qRes( \phi_2\cdot\phi_1). \]

We remark the following equivalence of the actions of the boundary Dirac operator and the normal derivation.   For a Laurent polynomial type spinor
\[\varphi(z) = \sum_{m,l,k}\,C_{+(m,l,k)}\phi^{+(m,l,k)}(z)+\sum_{m,l,k}\,C_{-(m,l,k)}\phi^{-(m,l,k)}(z)\,,\]
we have 
\[\frac{\partial}{\partial n}\varphi(z) =
\frac{1}{2|z|}\,\sum_{m,l,k}\,mC_{+(m,l,k)}\phi^{+(m,l,k)}(z)-\frac{1}{2|z|}\,\,\sum_{m,l,k}\,(m+3)C_{-(m,l,k)}\phi^{-(m,l,k)}(z)\,,\]
and 
\[
\Do\varphi(z) =
\frac{1}{2|z|}\,\sum_{m,l,k}\,mC_{+(m,l,k)}\phi^{+(m,l,k)}(z)-\frac{1}{2|z|}\,\,\sum_{m,l,k}\,(m+3)C_{-(m,l,k)}\phi^{-(m,l,k)}(z)\,.\]
Hence we have the following
\begin{proposition}~~~
For a Laurent polynomial type spinor \(\varphi\), 
\begin{equation}\label{bDequalnormal}
\Do\varphi(z) \,=\,\frac{\partial}{\partial n}\varphi(z) .
\end{equation}
\end{proposition}

Since there is no term of order \(O(\frac{1}{|z|^3})\) in the above expansions of 
\(\frac{\partial}{\partial n}\varphi\) and \(\Do\varphi\) we have the following
\begin{lemma}~~~
For a Laurent polynomial type spinor \(\varphi\), 
\begin{equation}\label{resDandn}
qRes\,(\,\Do \,\varphi\,)\,=\,0\,,\qquad 
 qRes\,(\frac{\partial}{\partial n}\varphi)=0\,.
 \end{equation}
 \end{lemma}

\subsubsection{ Algebra of currents on \(S^3\)}

The space \( \mathbf{C}[\,\phi^{\pm}\,]\) of spinors of Laurent polynomial type over \(\mathbf{C}^2\setminus \{0\}\) is not an algebra.   
\begin{definition}~~~
 The subalgebra of \(C^{\infty}(\mathbf{C}^2\setminus\{0\},\,S^+)\) that is generated by \( \mathbf{C}[\,\phi^{\pm}\,]\) is called the {\it algebra of currents} on \(\mathbf{C}^2\setminus\{0\}\) and is denoted by  \(\mathcal{L}\) .
\end{definition}

Every spinor \(\varphi\,\in \mathcal{L}\,\) is written as a \(\mathbf{C}\)-linear combination of basic spinors 
\(\,\phi_1\cdot \phi_2\cdot\,\cdots\,\phi_r\,\) for \(\,\phi_i=\,\phi^{\pm(m_i,l_i,k_i)}\,,\,0\leq m_i, 0\leq l_i\leq m_i\,,\,0\leq k_i\leq m_i+2,\,1\leq i\leq r 
\).

 We note that a spinor of \(\mathcal{L}\) is not necessarily a harmonic spinor on \(\mathbf{C}^2\setminus \{0\}\).   
For example, 
 \[
 \phi^{+(1,0,0)}\cdot \phi^{-(0,0,0)}\,=\,\sqrt{2}\,\left(\begin{array}{c}0\\-z_2\end{array}\right)\,\cdot \frac{1}{\vert z\vert^4}\,
 \left(\begin{array}{c}z_2\\ \overline{z}_1 \end{array}\right)\,
\,=\, \frac{\sqrt{2}}{\vert z\vert^4}\,\left(\begin{array}{c}\overline{z}_1\overline{z}_2\\ - z_2^2\end{array}\right)\, \]
 is not harmonic over \(\,\mathbf{C}^2\setminus \{0\}\).     
 
\begin{lemma}\label{multitosum}~~~
The product \(\,\phi^{\pm(m_1,\l_1,,k_1)}\cdot\phi^{\pm(m_2,\l_2,,k_2)}\) 
 of two spinors \(\phi^{\pm(m_1,\l_1,,k_1)}\) and \(\phi^{\pm(m_2,\l_2,,k_2)}\) is written by a linear combination of spinors  
 \(\vert z\vert^p\,\phi^{\pm(m,l,k)}\,\),  \(0\leq\,,m,\,l,\,k\,,\,p\,\leq m_1+m_2+2
\).
\end{lemma}
 \begin{proof}~~~
 The multiplication of spinors is defined in (\ref{spinormultip}).   
 By virtue of Proposition \ref{productformula} and the formula (\ref{wtov})  and  (\ref{complexconj}) we see that the  product \(\phi^{\pm (m_1,l_1,k_1)}\cdot \phi^{\pm (m_2,l_2,k_2)}\) of two spinors 
 \(\phi^{\pm (m_1,l_1,k_1)}\) and \( \phi^{\pm (m_2,l_2,k_2)}\)  is decomposed into a linear sum of the following spinors; 
  \[ \vert z\vert^{ p}\,\left(\begin{array}{c}v^k_{(l,m-l)}\\0\end{array}\right),\quad \vert z\vert^{ p}\,\left(\begin{array}{c}0\\v^{k}_{(l,m-l)}\end{array}\right)\,,\]
where   \(  k\,, \,l,\, m\,\) and \(\,p\,\) are numbers that are smaller  than or equal to \( m_1+m_2+2\,\).   
     On the other hand a spinor of the form 
 \(|z|^p\left(\begin{array}{c}v^k_{(l,m-l)}\\0\end{array}\right)\) or  \(|z|^p\left(\begin{array}{c}0\\v^{k+1}_{(l,m-l)}\end{array}\right)\)  is written as a linear combinations of \(|z|^{p^{\prime}}\phi^{+(m,l,k)}\) and \(|z|^{p^{\prime}}\phi^{-(m-1,k-1,l)}\).   For example,
 \[\left(\begin{array}{l}
 v^k_{(l,m-l)}\\0\end{array}\right)=A\phi^{+(m,l,k+1)}+B\phi^{-(m-1,k,l)}\,,\]
 with \( A= \sqrt{\frac{(k+1)!l!(m-l)!}{(m-k)!}}\frac{1}{m+1}\,\) and \(
 B=
 (-1)^l\frac{\sqrt{m-l}}{m+1}\,\).
 So any product  \(\phi^{\pm (m_1,l_1,k_1)}\cdot \phi^{\pm (m_2,l_2,k_2)}\) is written as a linear combination of \(\left\{\,|z|^p \phi^{\pm(m_1+m_2-n\,,\,\cdot\,,\,\cdot\,)}\,; \, 1\leq n,\, p\,\leq m_1+m_2+2\,\right\}\).   
 \end{proof}
We have seen that 
  the \(\mathbf{C}\)-algebra \(\mathcal{L}\,\) has the basis
\begin{equation}
\left\{\,\vert z\vert^p\,\phi^{\pm(m,l,k)}\,;\,0\leq p\,, \,0\leq m\,, 0\leq l\leq m,\,0\leq k\leq m+1\,\right\}.  
\end{equation}

 Moreover, we see from Proposition \ref{productformula}  that  any \(v^{k}_{(l,m-l)}\) is written by  a linear combination of \(\,|z|^{2r}\,v^{k_1}_{(l_1,m_1-l_1)}v^{k_2}_{(l_2,m_2-l_)}\) for \(1\leq r\leq m,\, \,0\leq m_1+m_2\leq m-1\,\),  \(0\leq l_1+l_2\leq l\) and \(0\leq k_1+k_2\leq k\), so that any  \(\,\phi^{\pm(m,l,k)}\)  is written by a linear combination of the products  \(\,|z|^{2r}\,\phi^{\pm(m_1,l_1,k_1)}\cdot \phi^{\pm(m_2,l_2,k_2)}\) for \(0\leq r\leq m-1,\,0\leq m_1+m_2\leq m-1\,\),  \(0\leq l_1+l_2\leq l\) and \(0\leq k_1+k_2\leq k\),  so that  \( \mathcal{L}\) is a graded algebra generated by the four spinors 
\(I=\phi^{ +(0,0,1)}\,,\,J=-\phi^{ +(0,0,0)}\,,\,\kappa=\phi^{ -(0,0,1)}\,,\,\lambda=\phi^{-(0,0,0)}\,\). 
    Thus we have proved the following:
\begin{theorem}\label{Laurentalgebra}~~~
The algebra of current \(\mathcal{L}\) becomes a graded algebra  with the generators given by the  spinors:
\begin{eqnarray*}
I &=\,\phi^{ +(0,0,1)}=\left(\begin{array}{c}1\\0\end{array}\right),\qquad
J &=-\,\phi^{ +(0,0,0)}=\left(\begin{array}{c}0\\1\end{array}\right),\\[0.2cm]
\kappa &= \phi^{ -(0,0,1)}=\frac{1}{\vert z\vert^4}\left(\begin{array}{c}-z_1\\\overline z_2\end{array}\right)\,,\quad
\lambda &=\phi^{-(0,0,0)}=\frac{1}{\vert z\vert^4}\left(\begin{array}{c}z_2\\ \overline z_1.\end{array}\right)\,.
\end{eqnarray*}
\end{theorem}
 
 {\bf Example}.  \\
As we saw in the above,  
\(\phi^{+(1,0,0)}\cdot \phi^{-(0,0,0)}\,\) is not a harmonic spinor.   But
\[|z|^4\, \phi^{+(1,0,0)}\cdot \phi^{-(0,0,0)}\,=\frac{1}{\sqrt{2}}\,\phi^{+(2,0,2)}+\sqrt{\frac{2}{3} }\,\phi^{+(2,0,0)}
 \in \mathbf{C}[\,\phi^{\pm}]\,\subset \mathcal{L},\]
and their restrictions to the boundary \(S^3=\{|z|=1\}\) are equal. 

 We put 
\begin{eqnarray}
\mathcal{K}&=&\{\phi\in\mathcal{L}\,;\,\phi=\left(\begin{array}{c}f\\0\end{array}\right),\quad\mbox{ for  } f\in C^{\infty}(\mathbf{C}^2\setminus\{0\},\,\mathbf{R})\},\\[0.2cm]
\mathcal{J}&=&\{\phi\in\mathcal{L}\,;\,\phi=\left(\begin{array}{c}\sqrt{-1}f\\ g+\sqrt{-1}h\end{array}\right),\quad\mbox{ for  } f, g, h \in C^{\infty}(\mathbf{C}^2\setminus\{0\},\,\mathbf{R})\}.
\end{eqnarray}
As we noticed at the beginning of Theorem\ref{Laurentalgebra}
  a spinors of the form 
\(\left(\begin{array}{c}v^k_{(l,m-l)}\\0\end{array}\right)\) or  \(\left(\begin{array}{c}0\\v^{k+1}_{(l,m-l)}\end{array}\right)\)  is written by a linear combinations of \(\phi^{+(m,l,k+1)}\) and \(\phi^{-(m-1,k,l)}\).     This fact and the relation (\ref{complexconj}) yield  that \(\mathcal{K}\) and 
\(\mathcal{J}\) are \(\mathbf{R}\)-linear suspaces of \(\mathcal{L}\) and    
 \(\mathcal{L}\)  is decomposed into the direct sum;  
  \(
 \mathcal{L}=\mathcal{K}\,\oplus\,\mathcal{J}\,\).     Evidently \(\mathcal{K}\) is a commutative subalgebra of \(\mathcal{L}\,\).    
 By the induced Lie algebra structure on \(\mathcal{L}\) we see that \(\mathcal{J}\) is an ideal of \(\mathcal{L}\) and 
\begin{equation}\label{Kcommute}
[\mathcal{K}\,,\,\mathcal{L}\,]=0\,,\quad 
 [\mathcal{L}\,,\,\mathcal{L}\,]\,=\,\mathcal{J}\,.
 \end{equation}
The quarternion trace (\ref{qtrace1}) on \(\mathcal{L}\) is non-degenerate and the restriction to  \(\mathcal{K}\) is also non-degenerate.  

\subsection{Radial derivative on \(\mathcal{L}\)}

The vector fields \(\theta_i\), \(i=1,2,3\,\), and the boundary Dirac operator \(\Do\,\), (\ref{tgDirac}),  are tangent to \(S^3=\{|z|=1\}\,\); 
 \[\theta_i\,|z|\,=0\,, \,i=1,2,3\,,\qquad \Do(\,|z|\,\varphi )\,=\,\vert z\vert\,\Do\,\varphi\,.\]
The normal vector field on \(\mathbf{R}^4\setminus\{0\}\) and  the normal derivation are defined in (\ref{normal}) and  (\ref{normalder}):
   \begin{equation*}
   \,{\mathbf n}\,=\nu+\overline{\nu},\qquad \frac{\partial}{\partial n}=\frac{1}{2|z|}{\mathbf n}\,.
 \end{equation*}  
  The action of  the normal vector field \({\bf n}\) on a spinor \(\varphi=\left(\begin{array}{c}u\\[0.2cm] v\end{array}\right)\in C^{\infty}(\mathbf{R}^4\setminus\{0\},\,S^+)\,\) is defined by 
 \begin{equation*}\label{radial}
{\mathbf n}\,\,\varphi\,=\,\left(\begin{array}{c}{\mathbf n}\,\,u\\[0.2cm] {\mathbf n}\,\,v\end{array}\right).
\end{equation*}

\begin{proposition}\label{derivationofL}~~
\begin{enumerate}
\item
\begin{equation}
{\mathbf n}( \phi_1 \cdot \phi_2 ) =\,(\,{\mathbf n}\,\phi _1) \cdot \phi_2 + \phi _1 \cdot (\,{\mathbf n}\,\phi _2)\,.  
\label{leibnitz}
\end{equation}
So  \(\,{\mathbf n}\,\) gives a derivation of the algebra \(\,\mathcal{L}\,\).    In particular \(\frac{\partial}{\partial n}\) is a derivation of \(\,\mathcal{L}\,\).
\item
\begin{equation}
{\mathbf  n}\,\,\phi^{+(m,l,k)} =\, m\, \phi^{+(m,l,k)}\,,\quad 
{\mathbf n}\,\,\phi^{-(m,l,k)}=\,- (m+3)\, \phi^{-(m,l,k)}\,.\label{normalderofbase}
\end{equation}
 \item
 If \(\varphi\) is a spinor of Laurent polynomial type on \(\mathbf{C}^2\setminus\{0\}\,\):
\begin{equation*}
 \varphi(z)=\sum_{m,l,k}\,C_{+(m,l,k)}\phi^{+(m,l,k)}(z)+\sum_{m,l,k}\,C_{-(m,l,k)}\phi^{-(m,l,k)}(z)\,, 
 \end{equation*}
  \({\bf n}\,\varphi\) is also a  spinor of Laurent polynomial type:
\begin{equation}
{\bf n}\,\varphi\,=\sum_{m,l,k}\,m\,C_{+(m,l,k)}\phi^{+(m,l,k)}(z)-\sum_{m,l,k}\,(m+3)\,C_{-(m,l,k)}\phi^{-(m,l,k)}(z)\,, 
 \end{equation}
 \end{enumerate}
\end{proposition}
\begin{proof}~~~
The formula (\ref{normalderofbase}) follows from the definition (\ref{basespinor}).      
\end{proof}

 \subsection{Homogeneous decomposition of \(\mathcal{L}\)}

Let  \(\mathcal{L}[m]\) be the subspace of \(\mathcal{L}\) consisting of those spinors 
 that are of homogeneous degree \(m\): 
  \[\varphi(z)=|z|^m\varphi(\frac{z}{|z|})\,. \]
\(\mathcal{L}[m]\) is spanned by the sum of the spinors  
\(\varphi=\,\phi_1\cdots\phi_n \) such that each \(\phi_i\) is equal to  \(\phi_i=\phi^{+(m_i,l_i,k_i)}\) or  \(\,\phi_i=\phi^{-(m_i,l_i,k_i)}\) , where \(m_i\geq 0\,\), \(\,0\leq l_i\leq m_i+1, \,0\leq k_i\leq m_i+2\), and such that 
\[m= \sum_{i:\,\phi_i=\phi^{+(m_i,l_i,k_i)}}\,m_i\,\,-\,\sum_{i:\,\phi_i=\phi^{-(m_i,l_i,k_i)}}\,(m_i+3).\]
It holds that \({\bf n}\,\varphi=\,m\varphi\), so the eigenvalues of \({\bf n}\,\) on \(\mathcal{L}\) are \(\left\{m;\,m\in\mathbf{Z}\,\right\}\) and 
\(\mathcal{L}[m]\) is the space of eigenspinors for the eigenvalue \(m\).    

{\bf Example }
\[\phi=\phi^{+(2,00)}\cdot\phi^{-(0,00)}\,\in \mathcal{L}[-1]\,,\quad\mbox{ and }\quad {\bf n}\phi\,=\,-\,\phi\,.\]
Here we note that \(-1\) is not an eigenvalue of the tangential Dirac operator \(\Do\).  
 
On the other hand \(\varphi\in\mathcal{L}\) is also expressed as a linear sum of the spinors of the form \(|z|^p\phi^{\pm(m,l,k)}\).    For this component we have 
\begin{eqnarray}\label{ncoef}
{\bf n}\,|z|^p\phi^{+(m,l,k)}\,&=&\,(p+ m)|z|^p\phi^{+(m,l,k)}\,,\nonumber \\[0.2cm] 
{\bf n}\,|z|^p\phi^{-(m,l,k)}\,&=&\,(p-m-3)|z|^p\phi^{-(m,l,k)}\,
\end{eqnarray}
so that  \(|z|^p\phi^{+(m,l,k)}\) is of homogeneous order \((p+ m)\) and \(|z|^p\phi^{-(m,l,k)}\) is of homogeneous order \((p-m-3)\).
The normal vector field \(\,{\bf n}\,\) acting on \(\mathcal{L}\,\) preserves the homogeneous degree.     Thus we have the eigenspace decomposition of \(\mathcal{L}\) by the normal derivative \({\bf n}\,\):
\begin{equation}\label{homog}
\mathcal{L}=\, \bigoplus_{m\in \mathbf{Z}}\,\mathcal{L}[m]\,\end{equation}

Let 
\begin{equation}
H_{[m]}\,=\left\{\varphi\in \mathcal{L}[m] ;\quad D\varphi=0\,\right\}.
\end{equation}

\begin{proposition}~~~
\begin{enumerate}
\item
\begin{equation*} H_{-1}=H_{-2}=0\,.
\end{equation*}
\item
\begin{equation}\label{harmonicdecomp}
\mathcal{L}[m]=\,\sum_{p\in \mathbf{Z}}\,|z|^{p}\,H_{m-p}
\end{equation}
\end{enumerate}
\end{proposition}
So any \(m\)-homogeneous spinor of \(\mathcal{L}\) is equal to a finite sum \(\sum_k|z|^k\phi_{(m-k)}\) with 
\(\phi_{(m-k)}\in H_{(m-k)}\).

{\it Proof}~~~
Since \(\phi(z)=|z|^m\phi(\frac{z}{|z|})\) for  \(\phi\in \mathcal{L}[m]\) , we have 
\[ D\phi(z)=
|z|^{m-1}\gamma_+(z)(\frac{m}{2}\phi-\Do\phi\,)(\frac{z}{|z|}).
\]
For \(\phi\in H_m\) we have 
\(\,\Do\phi=\frac{m}{2}\phi\) 
on \(S^3=\{|z|=1\}\).   
Since the eigenvalues of  the boundary Dirac operator \(\Do\) are \(\frac{m}{2}\) for \(m\geq 0\) and \(m\leq -3\), 
we have 
\[H_{-2}=H_{-1}=0.\]
We shall prove that 
\begin{equation}\label{intersection}
H_k\cap\,|z|\mathcal{L}[k-1]=0,\quad\forall k\in \mathbf{Z}\,,
\end{equation}
then by counting the dimensions we have
 \begin{equation}
 \mathcal{L}[k]=H_{k}\oplus |z|\mathcal{L}[k-1] .   
 \end{equation}
 It implies 
\begin{eqnarray*} 
|z|^{-k}\mathcal{L}[m+k]&=&|z|^{-k}H_{m+k}\oplus |z|^{-k+1}H_{m+k-1}\cdots\oplus
 |z|^{m}H_0\oplus\\[0.2cm]
 &\,&\qquad  |z|^{m+3}H_{-3}\oplus \cdots\oplus |z|^{k-1}H_{m-k+1}\oplus |z|^{k}\mathcal{L}[m-k].
 \end{eqnarray*}
 for any \(k\).     For the proof of (\ref{intersection}) we suppose that \(|z|\phi\in H_k\cap\,|z|\mathcal{L}[k-1]\) with \(\phi\in \mathcal{L}[k-1]\).    Let \(q\) be the biggest  
 number such that \(|z|\phi=|z|^q\psi\) for a \(\psi\in \mathcal{L}[k-q]\).   Then 
 \[D(|z|^q\psi)=\frac{q}{2}|z|^{q-1}\gamma_+\psi+|z|^qD\psi=0.\]
We have \(|z|\phi=|z|^{q+1}\alpha\) with \(\alpha=-\frac{2}{q}\gamma_-D\psi\in \mathcal{L}[k-q-1]\).    This is a contradiction.   
 \hfill\qed

\begin{corollary}
Let \(m=0,1,\cdots\).
\begin{enumerate}
\item
\(H_{m}\) has the base \(\{\phi^{+(m,l,k)};\,l=0,1,\cdots,m,\, k=0,\cdots m+2\}\)  .  
\item
\(H_{-(m+3)}\) has the base \(\{\phi^{-(m,l,k)};\,l=0,1,\cdots m,\, k=0,\cdots m+2\}\) 
.
\end{enumerate}
\end{corollary}

\begin{proposition}
For each \(m\in \mathbf{Z}\) there is a linear isomorphism
 \begin{equation}
 \mathcal{L}[m]\,\simeq\,\mathbf{C}[\phi^{\pm}]\,.\label{brestr}
 \end{equation}  
 \end{proposition}
 In fact,  we have the following isomorphism
 \begin{eqnarray*}
 |z|^{p}H_{m-p}\,&\simeq&\, \mathbf{C}[\phi^{+(m-p,l,k)};\,l=0,1,\cdots,m-p,\, k=0,\cdots m-p+1]\\[0.2cm]
  |z|^{p}H_{-(m-p+3)}\,&\simeq&\, \mathbf{C}[\phi^{-(m-p,l,k)};\,l=0,1,\cdots,m-p,\, k=0,\cdots m-p+1]
  \end{eqnarray*}
 that is given by letting \(|z|=1\) and the inverse is defined by 
 \begin{equation}
 \phi\longrightarrow |z|^m\phi(\frac{z}{|z|}).\label{extension}
 \end{equation}
 From  (\ref{harmonicdecomp}) we have our assertion.
   \hfill\qed

{\bf Example 1}\\
\(\phi^{+(1,0,1)}\phi^{+(1,1,1)}\in \mathcal{L}[2]\) is decomposed to 
\begin{eqnarray}
\phi^{+(1,0,1)}(z)\phi^{+(1,1,1)}(z)&=&
\,\frac{1}{\sqrt{2}}\phi^{+(2,1,1)}(z)-\frac{1}{3}\phi^{+(2,2,2)}(z)\,-
\frac{1}{3\sqrt{2}}|z|^6\phi^{-(1,1,2)}(z)\nonumber \\[0.2cm]
&&\in  H_2\oplus |z|^6H_{-4}\,.
\end{eqnarray}
The restriction to \(\{|z|=1\}\) becomes 
\begin{equation*}
\mathcal{L}[2]\ni \phi^{+(1,0,1)}\phi^{+(1,1,1)}\longrightarrow 
\varphi=
\,\,\frac{1}{\sqrt{2}}\phi^{+(2,1,1)}(z)-\frac{1}{3}\phi^{+(2,2,2)}(z)\,-
\frac{1}{3\sqrt{2}}\phi^{-(1,1,2)}(z)\,\in \mathbf{C}[\phi^{\pm}]
\end{equation*}
and the inverse is given by 
\[
\mathbf{C}[\phi^{\pm}]\ni\varphi\longrightarrow |z|^{2}\varphi(\frac{z}{|z|})\,\in \mathcal{L}[2]\,.\]

{\bf Example 2}\\
\(\phi^{+(1,0,1)}\phi^{-(0,0,0)}\in  \mathcal{L}[-2]\) is decomposed to 
\begin{eqnarray}
\phi^{+(1,0,1)}(z)\phi^{-(0,0,0)}(z)&=&
\frac{1}{|z|^4}(\,\frac{2}{3}\phi^{+(2,0,1)}(z)+\frac{\sqrt{2}}{3}\phi^{+(2,1,2)}(z)\,)+
\frac{1}{|z|^2}
\frac{1}{2}\phi^{+(0,0,1)}(z)\nonumber \\[0.2cm]
&&+
|z|^2(\,\frac{1}{6}\phi^{-(1,1,1)}(z)+\frac{1}{3\sqrt{2}}\phi^{-(1,0,0)}(z)\,)\nonumber
\\[0.2cm]
&&\in  \frac{1}{|z|^4}H_2\oplus \frac{1}{|z|^2}H_0\oplus |z|^2H_{-4}\,.\label{examp}
\end{eqnarray}
The restriction to \(\{|z|=1\}\) becomes 
\begin{eqnarray*}
\mathcal{L}[-2]&\ni &\phi^{+(1,0,1)}\phi^{-(0,0,0)}\longrightarrow \\[0.2cm]
\varphi&=&
\,\frac{2}{3}\phi^{+(2,0,1)}+\frac{\sqrt{2}}{3}\phi^{+(2,1,2)}\,+\frac{1}{2}\phi^{+(0,0,1)}+\frac{1}{6}\phi^{-(1,1,1)}+\frac{1}{3\sqrt{2}}\phi^{-(1,0,0)}\,\in \mathbf{C}[\phi^{\pm}]\end{eqnarray*}
and the inverse is given by 
\[
\mathbf{C}[\phi^{\pm}]\ni\varphi\longrightarrow |z|^{-2}\varphi(\frac{z}{|z|})\,\in \mathcal{L}[-2]\,.\]

By virtue of the above isomorphisms (\ref{brestr}) and (\ref{extension}) we 
extend the definitions of a quarternion trace and a quarternion residue that are defined for the  Laurent polynomial type spinors to those over the algebra of current: 
\[\mathcal{L}\,=\, \bigoplus_{m\in \mathbf{Z}}\,\mathcal{L}[m]\,,\quad   \mathcal{L}[m]=\sum_{p\in \mathbf{Z}}\,|z|^{p}\,H_{m-p}\,
.   \]

\begin{definition}\label{defforL}
Let \(\varphi=\,\sum_{m}\,\varphi_{[m]}\,\) with  \(\varphi_{[m]}\in \mathcal{L}[m]\) be the decomposition of \(\varphi\) into the homogeneous components \(\varphi_{[m]}\).   We denote \(\varphi_{[m]}=\sum_{p\in \mathbf{Z}}|z|^p\phi_{[m-p]}\) with \(\phi_{[m-p]}\in H_{m-p}\).    
\(qTr\,\varphi\) is defined by the term 
 \(\,\varphi_{[0]}\vert_{|z|=1}\,=\sum_{p\in\mathbf{Z}}\phi_{[-p]}\in\mathbf{C}[\phi^{\pm}]\,\): 
 \begin{equation}
 qTr\,\varphi\,=\,\frac{1}{2\pi^2}\int_{S^3}\,\varphi_{[0]}(\zeta)\sigma(d\zeta)\,.
 \end{equation}
And \( qRes\,\varphi\,\) is given by the term  
  \(\varphi_{[-3]}\vert_{|z|=1}\,=\sum_{p\in\mathbf{Z}}\phi_{[-3-p]}\in\mathbf{C}[\phi^{\pm}]\,\):
\begin{equation}
q Res\,\varphi\,=\,\frac{1}{2\pi^2}\int_{S^3}\,\gamma_+(\zeta)\varphi_{[-3]}(\zeta)\sigma(d\zeta)\,.
 \end{equation}
 \end{definition}

{\bf Example 1.}~~~
Let 
 \(\varphi=\phi^{+(1,1,2)}\cdot \phi^{+(1,1,2)}\cdot\phi^{-(1,0,0)}\,\) with 
  \(\,\, \phi^{+(1,1,2)}=\left(\begin{array}{c}-\sqrt{2}\overline{z}_2\\ 0\end{array}\right)\) and \( \phi^{-(1,0,0)}=\frac{\sqrt{2}}{|z|^6}\left( \begin{array}{c}z_2^2\\ \frac12z_2\overline{z}_1\end{array}\right)\,\).   Then 
\(\,\varphi(z)
=\frac{\sqrt{2}}{|z|^7}\,
\left(
\begin{array}{c}2|z_2|^4-|z_2|^2\overline{z}_2\overline{z}_1\\ 
2z_2^4+z_2^3\overline{z}_1\end{array}\right )
\,\in\mathcal{L}\,\) 
and 
\[\,qRes\,\varphi
=\,2\sqrt{2}\left(\begin{array}{c}1\\ 0\end{array}\right)\,.\]

{\bf Example 2.}~~~
Let \(\psi=|z|^3\phi^{+(1,1,2)}\cdot \phi^{+(1,1,2)}\cdot\phi^{-(1,0,0)}\,\).
\begin{equation*}
\psi=|z|^{3}\varphi(z)=\frac{\sqrt{2}}{|z|^4}\,
\left(
\begin{array}{c}2|z_2|^4-|z_2|^2\overline{z}_2\overline{z}_1\\ 
2z_2^4+z_2^3\overline{z}_1\end{array}\right )
\,\in\mathcal{L}\,
\end{equation*}
it holds that 
\[ qTr\,\psi\,=\,2\sqrt{2}\left(\begin{array}{c}1\\ 0\end{array}\right)\,.\]

\hspace{1pt}\\
\begin{proposition}\label{resrelation2}
Lemma \ref{resDandn} is valid for the algebra of current \(\mathcal{L}\):
\begin{equation}
qRes\,\Do\varphi\,=0\,, \quad    qRes\,|z|^{-k}\,{\bf n}\varphi=0\,,\quad \forall\, k\in \mathbf{N}.
\end{equation}
\end{proposition}
In particular \( qRes\,{\frac{\partial}{\partial n} }\varphi=0\,\).     But \( qRes\,{\bf n}\varphi=-3\,qRes\varphi \).

\begin{lemma}\label{res-tr}
  \begin{enumerate}
  \item
  \begin{equation}
  qRes\,|z|^{-k}\varphi\,=0\,,\quad k=1,2,\qquad\forall\varphi\in\mathcal{L}.
  \end{equation}
  \item
  \begin{equation}
  q Res\, |z|^{-3}\varphi\,=\,q Tr \varphi\,.
  \end{equation}
  \end{enumerate}
\end{lemma}
\begin{proof}~~~
First 
let \(\varphi\) be a Laurent polynomial type spinor; \(\varphi\,=\,\sum\,C_{-(m,\cdot,\cdot)}\phi^{-(m,\cdot,\cdot)}\,+\,
\,\sum\,C_{+(m,\cdot,\cdot)}\phi^{+(m,\cdot,\cdot)}\,\).    The quarternion trace of \(|z|^3\varphi\) is given by the coefficients of 
its homogeneous component of order \(0\), hence is equal to
\(qTr\,|z|^3\varphi\,=\,\left(\begin{array}{c}C_{-(0,0,1)}\\ C_{-(0,0,0)}
\end{array}\right)=\,q\,Res\,\varphi \).     Any spinor of \(\mathcal{L}\) is written by a sum of Laurent polynomial type spinors each summand of which may be multiplied by some radial length \(|z|^p\) so that we the proof follows from Definition\ref{defforL} .   
\end{proof}

\subsection{\(\mathbf{H}\)-valued 2-cocycle on \(\mathcal{L}\).
}

\subsubsection{2-cocycle on \(\mathcal{L}\)}

We shall introduce a {\it \(\mathbf{H}\)-valued }  2-cocycle on the \(\mathbf{R}-\)algebra  
\(\,\mathcal{L}\,\).   Remember the decomposition 
\begin{eqnarray*}
\mathcal{L}&=&\, \mathcal{K}\oplus\mathcal{J}, \\
\mathcal{K}&=&\,\mathbf{R}I\,,\quad \mathcal{J}\,=\{\phi=q\,i+r\,j+s\,k\in \mathcal{L}\,;\quad q,\,r\,, s\in C^{\infty}(\mathbf{C}^2\setminus{0},\mathbf{R})\,.
\end{eqnarray*}
We call  \(\phi\in \mathcal{L}\) a {\it basic form} if it takes one of the following forms:
\[\phi=pI,\,qi,\,rj,\,sk\quad   \mbox{ with } \,p,\,q,\,r,\,s\in C^{\infty}(\mathbf{C}^2\setminus{0},\mathbf{R})\,.\]
    The product of two basic forms is also a basic form.   So every spinor in \(\mathcal{L}\) is written as a sum of basic forms.   For basic forms \(\phi\,,\,\psi\,\in \mathcal{L}\), we put
\begin{equation}
\epsilon(\phi,\,\psi\,)=\left\{
\begin{array}{rl} 
+1, &\quad\mbox{ if \(\phi\) or \(\psi\) or \(\phi\cdot \psi\in \mathcal{K}\)} \\
-1, &\quad\mbox{ if \(\phi,\,\psi \,\mbox{ and } \phi\cdot\psi \in \mathcal{J}\)}.
\end{array}\right.
\end{equation}
That is, 
\begin{eqnarray*}
\epsilon(pI,tI)&=&\epsilon(pI,\,qi)=\epsilon(pI,rj)=\epsilon(pI,sk)=+1,\\
\epsilon(qi,\,ui)&=&\epsilon(rj,\,vj)=\epsilon(sk,\,wk)=\,+1,\\
\epsilon(qi,\,vj)&=&\epsilon(qj,\,wk)=\epsilon(rj,\,wk)\,=-1\,.\,.\end{eqnarray*}

\begin{lemma}\label{spinordeLambert}~~~ For basic forms \(\phi\) and \(\psi\) we have
\begin{equation}
\Do\phi\cdot \psi\,+\,\epsilon(\phi,\,\psi)\Do\psi\cdot\phi\,=\,\Do(\phi\cdot\psi)\,=\epsilon(\phi,\,\psi)\Do(\psi\cdot\phi).
\end{equation}
\end{lemma}
\begin{proof}
The formulae are proved for each couple of basic forms \(\phi,\,\psi\): 
For example, for \(\phi=pI\) and \(\psi=ui\), we have 
\begin{eqnarray*}
\Do\phi\cdot\psi\,&=&\, -(\theta_3p)uI+(\theta_1p)uk+(\theta_2p)uj\\
\Do\psi\cdot\phi\,&=&\, -(\theta_3u)pI+(\theta_1u)pk+(\theta_2u)pj\\
\Do\phi\cdot \psi\,+\,\Do\psi\cdot\phi\,&=&\, -(\theta_3up)I+(\theta_1up)k+(\theta_2up)j \,\\
&=& \Do(\,p\,u\,i)\,=\,\Do(\phi\cdot\psi)\,.
\end{eqnarray*}
For 
 \(\phi=rj\) and \(\psi=wk\), we have 
\begin{eqnarray*}
\Do\phi\cdot\psi\,&=&\, -(\theta_3r)wI+(\theta_1r)wk+(\theta_2r)wj\\
\Do\psi\cdot\phi\,&=&\, +(\theta_3w)rI-(\theta_1w)rk-(\theta_2w)rj\\
\Do\phi\cdot \psi\,-\,\Do\psi\cdot\phi\,&=&\, -(\theta_3rw)I+(\theta_1rw)k+(\theta_2rw)j \,\\
&=& \Do(\,r\,w\,i)\,=\,\Do(\phi\cdot\psi)\,.
\end{eqnarray*}
Others follow by similar calculations.\end{proof}

\begin{definition}\label{cocycle}
For \(\phi,\,\psi\,\in  \mathcal{L}\),  
we put  
\begin{equation}
A(\phi,\,\psi\,)\,=\,qRes\,\left( \,\Do\phi\,\cdot\,\psi\,-\,\Do\psi\,\cdot\,\phi\,\right)\,.
\end{equation}
\end{definition}
From Proposition \ref{intrepres} we have 
\begin{equation}
A(\phi,\,\psi\,)\,=\,\frac{1}{2\pi^2}\int_{\vert\zeta\vert=1}\gamma_+(\zeta)\left( \,\Do\phi\,\cdot\,\psi\,-\,\Do\psi\,\cdot\,\phi\,\right)\,\sigma(d\zeta).
\end{equation}

\(A(\phi,\,\psi\,)\) is an antisymmetric bilinear form of \(\phi,\,\psi\,\in   \mathcal{L}\).   
    From Proposition \ref{resrelation2}  and Lemma \ref{spinordeLambert} 
we see that \(A(\phi,\psi)=0\) for basic forms \(\phi,\,\psi\) with \(\epsilon(\phi,\psi)=\,-1\).

\begin{proposition}~~~
   For \(\phi_1,\,\phi_2\,,\,\phi_3\,\in   \mathcal{L}\),  we have
\begin{eqnarray}
&& A(\,\phi_1\,,\,\phi_2\,)\,+\,A(\,\phi_2\,,\,\phi_1\,)\,=\,0\,,\label{antisym}\\
&& A(\,\phi_1\cdot \phi_2\,, \,\phi_3)\,+\,A(\,\phi_2\cdot \phi_3\,, \,\phi_1)\,+
\,A(\,\phi_3\cdot \phi_1\,, \,\phi_2)\,=\,0\,.\label{precocycle}
\end{eqnarray}
\end{proposition}

\begin{proof}
Evidently \(A\) is  an antisymmetric bilinear form.   
We put \(\mathfrak{a}(\phi,\,\psi)=\left( \,\Do\phi\,\cdot\,\psi\,-\,\Do\psi\,\cdot\,\phi\,\right)\).   
The 2-cocycle property is proved by calculations  
exploiting the formula 
\(\mathfrak{a}(\phi_1\phi_2\,,\,\phi_3)\, \) 
for basic forms \(\phi_1,\phi_2,\,\phi_3\).
For these basic forms we have 
\begin{eqnarray*}
\mathfrak{a}(\phi_1\cdot\phi_2\,,\,\phi_3)&= &(\Do(\phi_1\cdot\phi_2))\cdot\phi_3\,- (\Do \phi_3)\cdot (\phi_1\cdot \phi_2)\\
&=&(\Do\phi_1)\cdot (\phi_2\cdot\phi_3)\,+\,\epsilon(\phi_1,\phi_2)(\Do\phi_2)\cdot(\phi_1\cdot\phi_3)
- (\Do \phi_3)\cdot (\phi_1\cdot \phi_2)\,.\\
\mathfrak{a}(\phi_2\cdot \phi_3\,,\,\phi_1)&= &(\Do(\phi_2\cdot\phi_3))\cdot\phi_1\,- (\Do \phi_1)\cdot (\phi_2\cdot \phi_3)\\
&=&(\Do\phi_2)\cdot\phi_3\cdot\phi_1\,+\,\epsilon(\phi_2,\phi_3)(\Do\phi_3)\cdot(\phi_2\cdot\phi_1)
- (\Do \phi_1)\cdot (\phi_2\cdot \phi_3)\,.\\
\mathfrak{a}(\phi_3\cdot \phi_1\,,\,\phi_2)&=&(\Do\phi_3)\cdot\phi_1\cdot\phi_2\,+\,\epsilon(\phi_3,\phi_1)(\Do\phi_1)\cdot(\phi_3\cdot\phi_2)
- (\Do \phi_2)\cdot (\phi_3\cdot \phi_1)\,.
\end{eqnarray*}
Then 
\begin{eqnarray*}
&&\mathfrak{a}(\,\phi_1\cdot \phi_2\,, \,\phi_3)\,+\,\mathfrak{a}(\,\phi_2\cdot \phi_3\,, \,\phi_1)\,+
\,\mathfrak{a}(\,\phi_3\cdot \phi_1\,, \,\phi_2)\,\\
&=& \,\epsilon(\phi_1,\phi_2)(\Do\phi_2)\cdot(\phi_1\cdot\phi_3)\,+\,
\epsilon(\phi_2,\phi_3)(\Do\phi_3)\cdot(\phi_2\cdot\phi_1)\,+
\epsilon(\phi_3,\phi_1)(\Do\phi_1)\cdot(\phi_3\cdot\phi_2)\,\\
&=&\,k\,\Do(\phi_1\cdot\phi_2\cdot\phi_3)\,,
\end{eqnarray*}
for a constant \(k\).
Since 
\[A(\phi,\psi)\,=\,q\,Res.\mathfrak{a}(\phi,\psi)\,,\quad\mbox{ for \( \phi\,,\psi\in \mathcal{L}\,\),}\]
we have from Proposition \ref{resrelation2} 
\begin{equation}
\begin{split}
&A(\,\phi_1\cdot \phi_2\,, \,\phi_3)\,+\,A(\,\phi_2\cdot \phi_3\,, \,\phi_1)\,+
\,A(\,\phi_3\cdot \phi_1\,, \,\phi_2)\,\\
&=\,\,qRes.\,\left(
\mathfrak{a}(\,\phi_1\cdot \phi_2\,, \,\phi_3)\,+\,\mathfrak{a}(\,\phi_2\cdot \phi_3\,, \,\phi_1)\,+
\,\mathfrak{a}(\,\phi_3\cdot \phi_1\,, \,\phi_2)\,\right)d\sigma\,\\
&=\,\,qRes.\,\left(\,k\,\Do(\phi_1\cdot\phi_2\cdot\phi_3)\,\right)=0\,,
\end{split}
\end{equation}
for basic forms \(\phi_1,\phi_2\) and \(\phi_3\).       
  Then by the additivity of \(\,A(\phi,\,\psi)\) for 
the variables \(\phi\) and \(\psi\) we conclude that \(A\) satisfies the condition (\ref{precocycle}).
\end{proof}
The bilinear form \(A(\phi\,,\,\psi\,)\) of Definition \ref{cocycle} on \( \mathcal{L}\) gives a \(\mathbf{H}\)-valued 2-cocycle.

By the definition of quarternion residue we have the following:
\begin{eqnarray}
A(\phi^{+(m_1,l_1,k_1)}\,,\,\phi^{+(m_2,l_2,k_2)}\,)\,&=&\,0\\[0.2cm]
A(\phi^{-(m_1,l_1,k_1)}\,,\,\phi^{-(m_2,l_2,k_2)}\,)\,&=&\,0\,.
\end{eqnarray}

\begin{proposition}\label{cocycleandnormalder}
\begin{equation}
A(\,\frac{\partial}{\partial  n}\,\phi\,,\,\psi\,)\,+\,A(\,\phi\,,\,\frac{\partial}{\partial  n}\psi\,)\,=\,0\,.
\end{equation}
\end{proposition}

 \begin{proof}~~~
 Let \(\,\mathfrak{a}(\phi,\,\psi)=\,\Do\phi\,\cdot\,\psi\,-\,\Do\psi\,\cdot\,\phi\,\).  
It holds from (\ref{DNcommute}) that 
\(\, \frac{\partial}{\partial  n}\,\Do\,=\Do\,\frac{\partial}{\partial  n} - \Do\,\) on \(\mathcal{L}\).   
So we have
\begin{eqnarray*}
\mathfrak{a}(\frac{\partial}{\partial  n}\phi\,,\,\psi\,)\,+\,\mathfrak{a}(\,\phi\,,\,\frac{\partial}{\partial  n}\psi\,)\,
&=&\Do(\,\frac{\partial}{\partial  n}\phi\,)\,\cdot \psi -\Do\psi \,\cdot (\,\frac{\partial}{\partial  n}\phi\,)\,+
\Do\phi \,\cdot (\,\frac{\partial}{\partial  n}\psi\,) \, -\Do(\,\frac{\partial}{\partial  n}\psi\,)\,\cdot \phi\,\\[0.2cm]
&=&\,\frac{\partial}{\partial  n}\,(\Do\phi)\,\cdot \,\psi \,-\,  \,\frac{\partial}{\partial  n}\,(\Do\psi)\,\cdot \,\phi \,  +\,\Do\phi\cdot(\,\frac{\partial}{\partial  n}\,\psi)\,-\, 
\Do\psi\cdot(\,\frac{\partial}{\partial  n}\phi)\\[0.2cm]
&\,& +\frac{1}{2|z|}\left(\Do\phi\cdot\psi-\Do\psi\cdot\phi\right)\\[0.2cm]
&=&\,
\,\frac{\partial}{\partial  n}\,(\Do\phi\,\cdot\psi\,-\,\Do\psi\cdot\phi\,) 
+\frac{1}{2|z|}\left(\Do\phi\cdot\psi-\Do\psi\cdot\phi\right)
\\[0.2cm]
&=& \,\frac{\partial}{\partial  n}\,\mathfrak{a}(\phi,\psi)\,+\,\frac{1}{2|z|}\mathfrak{a}(\phi,\psi)\,.
\end{eqnarray*}
By virtue of Proposition \ref{resrelation2}, we have 
\[A(\frac{\partial}{\partial  n}\phi\,,\,\psi\,)\,+\,A(\,\phi\,,\,\frac{\partial}{\partial  n}\psi\,)\,=\,qRes \left [ \frac{\partial}{\partial  n}\,\mathfrak{a}(\phi,\psi)\,+\,\frac{1}{2|z|}\mathfrak{a}(\phi,\psi)\,\right]=\,0\,.\]
\end{proof}

 \section{\(\mathfrak{g}\)-current algebras on \(S^3\)}

\subsection{Quarternification of a Lie algebra}
 
    Hitherto we have prepared the spaces \( C^{\infty}(S^3,\,S^+)\) and \(\mathcal{L}\)  that will play the role of {\it coefficients of current algebras}  which we shall discuss in  the next section.   These are {\it complex} vector spaces.     As we have seen they are given a \(\mathbf{C}-\)algebra structure.    \(C^{\infty}(S^3,S^+)\simeq C^{\infty}(S^3,\mathbf{H})\) has also a \(\mathbf{H}\)-module structure, while our basic interest is on  \(\,\mathcal{L}\) that is  endowed with the {\it real Lie algebra} structure.    In such a way  it is frequent that we deal with the fields \(\mathbf{H}\),  \(\mathbf{C}\) and \(\mathbf{R}\) in one formula.   
So to prove a steady point of view for our subjects we introduce here the concept of {\it quarternion Lie algebras}, \cite{Kq}.    

  First we note that a quarternion module \(V=\mathbf{H}\otimes_{\mathbf{C}}V_o=V_o+JV_o\,\), \(V_o\) being a \(\mathbf{C}\)-module, has two  involutions \(\sigma\) and \(\tau\):\[\sigma(u+Jv)=u-Jv\,,\quad \tau(u+Jv)=\overline u+J\overline v\,,\quad u,v\in V_o\,.\]
 \[\sigma\,\left(\begin{array}{c}u\\
v\end{array}\right)\,=\left(\begin{array}{c}u\\
-v\end{array}\right)\, \mbox{ and } \quad
\tau \left(\begin{array}{c}u\\
v\end{array}\right) =\left(\begin{array}{c}\overline u\\
\overline v\end{array}\right)\,.
\]
  We have \(\mathcal{L}= \mathcal{K}\oplus  \mathcal{J}\) with 
 \begin{eqnarray*}
 \mathcal{K}&=&\{\varphi\in\mathcal{L};\,\sigma\varphi=\tau\varphi=\varphi\},\\
 \mathcal{J}&=&\{\varphi\in\mathcal{L};\,\sigma\varphi=-\varphi\quad\mbox{ or }\quad\tau\varphi=-\varphi\,\}.
 \end{eqnarray*}

\begin{definition} ~~~ A {\it quarternion Lie algebra} \(\,\mathfrak{q}\) is a \(\mathbf{C}\)-submodule of a \(\mathbf{H}-\)module \(V\) that is endowed with a real Lie algebra structure compatible with the involutions \(\sigma\) and \(\tau\):
\begin{eqnarray*}
&\sigma\mathfrak{q}\,\subset\mathfrak{q}\,,\\[0.2cm] &\sigma [x\,,y]\,=[\sigma x\,,\sigma y]\,,\quad \tau [x\,,y]\,=[\tau x\,,\tau y]\,\quad \mbox{ for }\, x,y\in \mathfrak{q} . \label{involutions}
\end{eqnarray*}
 For a complex Lie algebra \(\mathfrak{g}\) the {\it quarternification} of \(\mathfrak{g}\) is a quarternion Lie algebra \(\mathfrak{g}^q\) that is generated ( as a real Lie algebra ) by  \(\mathbf{H}\otimes_{\mathbf{C}}\mathfrak{g}\).     
 \end{definition}
 For example, 
\(\mathfrak{so}^{\ast}(2n)=\mathbf{H}\otimes_{\mathbf{C}}\mathfrak{so}(n,\mathbf{C})\) is the quarternification of \(\mathfrak{so}(n,\mathbf{C})\,\).   And 
 \(\mathfrak{sl}(n,\mathbf{H})\) is the quarternification of  \(\mathfrak{sl}(n,\mathbf{C})\) while 
\(\mathbf{H}\otimes_{\mathbf{C}}\mathfrak{sl}(n,\mathbf{C})\) is not a Lie algebra.     

\begin{theorem}~~~
 The algebra of current \(\mathcal{L}\,\) is a quarternion Lie algebra by its associated Lie bracket.   
\end{theorem}
     In fact \(\mathcal{L}\,\) is a real submodule of \(S^3\mathbf{H}=C^{\infty}(S^3,\,\mathbf{H})\) that is invariant under the involutions \(\sigma\) and \(\tau\).     \(\mathcal{L}\,\) is an associative complex algebra with the multiplication (\ref{spinormultip}) and has an induced  Lie algebra structure over \(\mathbf{R}\).   The involutions \(\sigma\) and \(\tau\) are given by 
 \[\sigma\,\left(\begin{array}{c}u\\
v\end{array}\right)\,=\left(\begin{array}{c}u\\
-v\end{array}\right)\, \mbox{ and } \quad
\tau \left(\begin{array}{c}u\\
v\end{array}\right) =\left(\begin{array}{c}\overline u\\
\overline v\end{array}\right)\,.
\]
and satisfy \(\sigma([x,y])=[\sigma\,x,\,\sigma\,y]\) and  \(\tau([x,y])=[\tau\,x,\,\tau\,y]\,\).   

  We have \(\mathcal{L}= \mathcal{K}\oplus  \mathcal{J}\) with 
 \begin{eqnarray*}
 \mathcal{K}&=&\{\varphi\in\mathcal{L};\,\sigma\varphi=\tau\varphi=\varphi\},\\
 \mathcal{J}&=&\{\varphi\in\mathcal{L};\,\sigma\varphi=-\varphi\quad\mbox{ or }\quad\tau\varphi=-\varphi\,\}.
 \end{eqnarray*}
     
\medskip

\subsection{\(\mathfrak{g}-\)Current algebras and its subalgebras }

 We remember that we called  the algebra  \(\,\mathcal{L}\)  {\it  the algebra of current on \(S^3\)}.   By virtue of Theorem \ref{Laurentalgebra} \(\,\mathcal{L}\) is a \(\mathbf{C}\)-algebra generated by the spinors 
\[I=\phi^{+(0,0,1)},\,J=-\phi^{+(0,0,0)},\,\kappa=\phi^{-(0,0,1)},\,\lambda=\phi^{-(0,0,0)}\,.\] 
In the sequel we denote \(S^3E\) for a \(\mathbf{C}\)-module \(E\)  the space of \(E\)-valued smooth functions on \(S^3\): \(S^3E=C^{\infty}(S^3,\,E)\).    The space of positive spinors \(C^{\infty}(S^3, S^+)\) is identified with 
\(S^3\mathbf{H}\).     

\(S^3\,\mathfrak{gl}(n,\mathbf{H})\,=\,S^3\mathbf{H}\otimes_{\mathbf{C}}\mathfrak{gl}(n,\mathbf{C})\) becomes a quarternion Lie algebra  with the  Lie bracket defined by 
 \begin{equation}\label{glbracet}
 [\,\phi_1 \otimes X_1\, , \,\phi_2  \otimes X_2\,]=  ( \phi_1\cdot\phi_2 ) \otimes X_1X_2\,
- (\phi_2 \cdot \phi_1 ) \otimes X_2X_1\,,
\end{equation}
for \( \phi_1,\phi_2\in 
S^3\,\mathbf{H}\) and \(X_1,X_2\in \mathfrak{gl}(n,\mathbf{C})\).
Here the right hand side is in the tensor product of the \(\mathbf{C}\)-algebra \(S^3\mathbf{H}\) and the matrix algebra \(\mathfrak{gl}(n,\mathbf{C})\).   

Let \((\,\mathfrak{g}\,,\,[\,\,,\,\,]\,)\) be a complex Lie algebra that we suppose to be a subalgebra of \(\mathfrak{gl}(n,\mathbf{C})\) for some  \(n\).  
   Then \(\mathcal{L}\otimes_{\mathbf{C}}  \mathfrak{g}\,\) becomes a \(\mathbf{C}\)-submodule of  \( S^3\mathfrak{gl}(n,\mathbf{H})=\,S^3\mathbf{H}\otimes_{\mathbf{C}}\mathfrak{gl}(n,\mathbf{C})\).   
The involutions \(\sigma\) and \(\tau\) on \(\,\mathcal{L}\) are extended to \(\,\mathcal{L}\otimes_{\mathbf{C}}\mathfrak{g}\,\)  by
 \[\sigma(\phi\otimes X)=\sigma(\phi)\otimes X\,\quad\mbox{ and }\quad \tau(\phi\otimes X)=\tau(\phi)\otimes X\,\]
 respectively  for \(\phi\in \mathcal{L}\) and \(X\in \mathfrak{g}\,\).      
Thus \(\mathcal{L}\otimes_{\mathbf{C}}  \mathfrak{g}\,\) endowed with 
the  bracket (\ref{glbracet}) generates a quarternion Lie algebra.    
 \begin{definition}~~~
 The quarternification of \(
\left(\,\mathcal{L}\otimes_{\mathbf{C}}  \mathfrak{g}\,,\,[\,\,,\,\,]\,\right)
\), that is, 
the quarternion Lie algebra generated by 
\(
\left(\,\mathcal{L}\otimes_{\mathbf{C}}  \mathfrak{g}\,,\,[\,\,,\,\,]\,\right)
\) 
is called {\it \(\mathfrak{g}\)-current algebra} and is denoted by \(\mathcal{L}\mathfrak{g}\).  
\end{definition}
As the following examples show \(\mathcal{L}\otimes_{\mathbf{C}}\mathfrak{g}\) is not necessarily a Lie algebra so that the Lie algebra \(\mathcal{L}\mathfrak{g}\) is defined as the {\it quarternification } generated by \(\mathcal{L}\otimes_{\mathbf{C}}\mathfrak{g}\) in the \(\mathbf{H}\)-module 
\(S^3\mathfrak{gl}(n,\mathbf{H})\).     

{\bf Examples}: ~~~ The following elements are in 
\(\, \mathcal{L}\mathfrak{g}\,\ominus \,(\mathcal{L}\otimes_{\mathbf{C}}\mathfrak{g})\,\).
\begin{enumerate}
\item
\begin{equation*}
\sqrt{-1}(X_1X_2+X_2X_1)\,\in \mathcal{L}\mathfrak{g}\,,\quad\mbox{ for }\forall X_1,\,X_2\in \mathfrak{g}\,.
\end{equation*}
In fact we have 
\[\mathcal{L}\mathfrak{g}\,\ni\,
[\,J\otimes X_1\,,\,\sqrt{-1}J\otimes X_2\,]\,=\,\sqrt{-1}I\otimes(X_1X_2+X_2X_1)\,.\]
Here the right hand-side is calculated in \(S^3\mathfrak{gl}(n,\mathbf{H})\) which gives the left-hand side element of \(\mathcal{L}\mathfrak{g}\,\).  
\item
\begin{equation*}
\sqrt{-1}J\otimes (X_1X_2+X_2X_1)\,\in \mathcal{L}\mathfrak{g}\,,\quad\mbox{ for }\forall X_1,\,X_2\in \mathfrak{g}\,.
\end{equation*}
In fact 
\[\mathcal{L}\mathfrak{g}\,\ni\,
[\,J\otimes X_1\,,\,\sqrt{-1}I\otimes X_2\,]\,=\,\sqrt{-1}J\otimes(X_1X_2+X_2X_1)\,.\]
\end{enumerate} 

Remember that the quarternification of a complex Lie algebra \(\mathfrak{g}\) is the quarternion Lie algebra \(\mathfrak{g}^q\) generated by \(\mathbf{H}\otimes_{\mathbf{C}}\mathfrak{g}=\mathfrak{g}+J\mathfrak{g}\).    The latter is not a Lie algebra in general.  Since 
 \(I=\phi^{+(0,0,1)}=\left(\begin{array}{c}1\\0\end{array}\right)\) and \(J=-\phi^{+(0,0,0)}=\left(\begin{array}{c}0\\1\end{array}\right)\) are in \(\,\mathcal{L}\),  
\(\mathfrak{g}^q\) becomes a subspace of \(\mathcal{L}\mathfrak{g}\).   
We have the following relations:
\begin{equation*}
S^3\mathfrak{g}^q\supset S^3\mathfrak{g}+J\,(S^3\mathfrak{g})\supset \mathcal{L}\mathfrak{g}\,\supset \mathfrak{g}^q,
\end{equation*}
where \(S^3\mathfrak{g}+J\,(S^3\mathfrak{g})\) is not necessarily a Lie algebra and   
\(S^3\mathfrak{g}^q\) is the Lie algebra with bracket (\ref{glbracet}).    

The following examples \(1\sim 3\) show the case where both \(S^3\mathbf{H}\otimes_{\mathbf{C}}\mathfrak{g}\) and \(\mathbf{H}\otimes_{\mathbf{C}}\mathfrak{g}\) become Lie algebras (over \(\mathbf{C}\)).

{\bf Examples}
\begin{enumerate}
\item
\[\mathfrak{gl}(n,\mathbf{H})=\mathbf{H}\otimes_{\mathbf{C}}\mathfrak{gl}(n,\mathbf{C})
\subset \mathcal{L}\mathfrak{gl}(n,\mathbf{C})
\subset S^3\mathbf{H}\otimes_{\mathbf{C}}\mathfrak{gl}(n,\mathbf{C})\,= S^3
\mathfrak{gl}(n,\mathbf{H})
\]
\item
\[
so^{\ast}(2n)=\mathbf{H}\otimes_{\mathbf{C}}\mathfrak{so}(n,\mathbf{C})\,\subset\, 
\mathcal{L}\mathfrak{so}(n,\mathbf{C})\,\subset S^3\mathbf{H}\otimes \mathfrak{so}(n,\mathbf{C})=S^3\mathfrak{so}^{\ast}(2n)  \]
\item
\[
\mathfrak{sp}(2n)=\mathbf{H}\otimes_{\mathbf{C}}\mathfrak{u}(n)\,\subset \mathcal{L}\mathfrak{u}(n)\,\subset \, S^3\mathbf{H}\otimes_{\mathbf{C}}\mathfrak{u}(n)\,=\,S^3\mathfrak{sp}(2n)    .\]
\item~~~
 \(\mathfrak{sl}(n,\mathbf{H})\) is the quarternification of  \(\mathfrak{sl}(n,\mathbf{C})\) which is not in \(\mathcal{L}\otimes_{\mathbf{C}}\mathfrak{sl}(n,\mathbf{C})\). \\
   In fact, 
 let \(\{h_i=E_{ii}-E_{i+1\,i+1};\,1\leq i\leq n-1\,,\quad E_{ij},\,i\neq j\,\}\) be the basis of \(\mathfrak{g}=\mathfrak{sl}(n,\mathbf{C})\).   Then 
\([\sqrt{-1}Jh_1,Jh_2\,]=-2\sqrt{-1}E_{22}\,\in \mathfrak{g}^q\subset\mathcal{L}\mathfrak{g}\) but not in \(\mathcal{L}\otimes_{\mathbf{C}}\mathfrak{g}\).  
\end{enumerate}

\subsection{Root space decomposition of \(\mathfrak{g}\)-current algebras}

\subsubsection{ }
Let \(\mathfrak{g}\) be a simple Lie algebra with Cartan matrix \(A=\left( c_{ij}\right )\).    Let \(\mathfrak{h}\) be a Cartan subalgebra, \(\Phi\) the corresponding root system.   Let \(\Pi=\{\alpha_i;\,i=1,\cdots,l=\dim\,\mathfrak{h}\}\subset \mathfrak{h}^{\ast}\) be the set of simple roots and  \(\{h_i=\alpha_i^{\vee}\,;\,i=1,\cdots,l\,\}\subset \mathfrak{h}\) be the set of simple coroots.   The Cartan matrix
 \(A=(\,c_{ij}\,)_{i,j=1,\cdots,r}\) is given by \(c_{ij}=\left\langle \alpha_j ,\,\alpha_i^{\vee}\right\rangle\).       
 \(\alpha(h)\) is real if  \(h\in\mathfrak{h}\) is real.   The Killing form 
 \( (x\vert \,y) =tr( ad\,x\,\,ad\,y )\) gives the symmetric invariant bilinear form on \(\mathfrak{g}\).   We have an isomorphism \(h\longrightarrow h^{\ast}\) from \(\mathfrak{h}\) to \(\mathfrak{h}^{\ast}\) given by \(\langle h^{\ast}, x \rangle = ( h\vert x )\).
 Let \(\,\mathfrak{g}_{\alpha}=\{\xi\in\mathfrak{g}\,;\,ad(h)\xi\,=\,\alpha(h)\xi, \quad\forall h\in \mathfrak{h}\}\) be the root space of \(\alpha\in\Phi\).   Then  \(\dim_{\mathbf{C}}\,\mathfrak{g}_{\alpha}=1\).    
  Let \(\Phi_{\pm}\) be the set of positive ( respectively negative )  roots of \(\mathfrak{g}\) and put 
\[\mathfrak{e}=\sum_{\alpha \in \Phi_{+}}\,\mathfrak{g}_{\alpha}\,,\quad \mathfrak{f}=\sum_{\alpha \in \Phi_{-}}\,\mathfrak{g}_{\alpha}\,.\]
   Fix a standard set of generators \(\,h_i\in \mathfrak{h}\,, \,e_i\in \mathfrak{g}_{\alpha_i}\),  \(f_i\in \mathfrak{g}_{-\alpha_i}\).      
\(\mathfrak{g}\) is generated by the set 
\( \{e_i,\,f_i,\,h_i\,;\,i=1,\cdots,l\,\}\), and these generators satisfy the relations:
\begin{equation}\label{S1}
[\,h_i,\,h_j\,]\,=\,0\,, \quad   [\,e_i\,,\,f_j\,] \,=\,\delta_{ij}h_i\,,\quad
  [\,h_i\,,\,e_j\,]\,=\,c_{ji}e_j\,,\quad
[\,h_i\,,\,f_j\,]\,=\,-\,c_{ji}f_j\,.
\end{equation}
This is a presentation of \(\mathfrak{g}\) by generators and relations which depend only on the root system \(\Phi\).     
     The triangular decomposition of the simple Lie algebra \(\mathfrak{g}\) becomes 
\(\mathfrak{g}=\mathfrak{f}+ \mathfrak{h}+ \mathfrak{e}\), direct sum , with the space of positive root vectors \(\mathfrak{e}\) and  
the space of negative root vectors \(\mathfrak{f}\).   
 
\( \mathfrak{g}\) is considered as a quarternion Lie subalgebra of  the \(\mathfrak{g}\)-current algebra \(\,\mathcal{L}\mathfrak{g}\,\); 
\begin{eqnarray}\label{includ}
& i\,:\,\mathfrak{g}\,\ni \,X\,\longrightarrow \,\phi^{+(0,0,1)}\otimes X\,\in\,\mathcal{L}\mathfrak{g}\,,\\[0.2cm]
& \left[\phi^{+(0,0,1)}\otimes X,\,\phi^{+(0,0,1)}\otimes Y\right]_{\mathcal{L}\mathfrak{g}} =
\left[X,Y\right]_{\mathfrak{g}}\,.\nonumber
\end{eqnarray}
We adopt the following abbreviated notations:   
For \(\phi_{i}\in \mathcal{L}\). \(\,x_{i}\in \mathfrak{g}\,\), 
\(i=1,\cdots,t\),  we put
\begin{eqnarray}
 x_{12\cdots t}&=&[\, x_{1},\,[\, x_{2},\,[\,\cdots\,\cdots\, x_{t}\,]\,]\cdots\,]\,,\nonumber\\[0.2cm]
\,\phi_{12\cdots t}\ast x_{12\cdots t}&=&[\phi_{1}\otimes x_{1},\,[\phi_{2}\otimes x_{2},\,[\,\cdots\,\cdots\,,\,\phi_{t}\otimes x_{t}\,]\,]\cdots\,]\,.\label{abbreviation}
\end{eqnarray}
Every element of \(\mathcal{L}\mathfrak{g}\) is expressed by a linear combination of  \(\,\phi_{12\cdots t}\ast x_{12\cdots t}\)'s.       
We have a projection from \(\mathcal{L}\mathfrak{g}\) to \(\mathfrak{g}\) that extends the correspondence:
\begin{equation}\label{project}
\pi:\,\mathcal{L}\mathfrak{g}\ni \,\phi_{12\cdots t}\ast x_{12\cdots t}\,\longrightarrow \,
 x_{12\cdots t}\,\in \mathfrak{g}.
 \end{equation}
 It is obtained by letting all \(\phi_i\)'s in (\ref{abbreviation}) equal to \(\phi^{+(0,0,1)}\).  
 
\subsubsection{ The adjoint representation \(ad_{\mathcal{K}\mathfrak{h}}:\,\mathcal{K}\mathfrak{h}\longrightarrow\,End(\mathcal{L}\mathfrak{g})\)}


We shall investigate the triangular decomposition of \(\mathfrak{g}\)-current algebra \(\mathcal{L}\mathfrak{g}\).  
 \begin{definition}~~~
 Let \(\mathcal{L}\,\mathfrak{h}\), 
 \(\,\mathcal{L}\,\mathfrak{e}\) and \(\mathcal{L}\,\mathfrak{f}\) respectively be the  Lie subalgebras of  the \(\mathfrak{g}\)-current algebra \(\mathcal{L}\mathfrak{g}\) that are generated by \(\,\mathcal{L}\otimes_{\mathbf{R}}\mathfrak{
 h}\,\), \(\,\mathcal{L}\otimes_{\mathbf{R}}\mathfrak{e}\,\) and \(\,\mathcal{L}\otimes_{\mathbf{R}}\mathfrak{f}\,\) respectively. \\
 Let \(\mathcal{K}\, \mathfrak{h}\) 
 and \(\,\mathcal{J}\, \mathfrak{h}\)
  be the  Lie subalgebra of \(\mathcal{L}\,\mathfrak{g}\,\) generated by  \(\mathcal{K}\otimes_{\mathbf{R}}\mathfrak{h}\,\)
   and \(\mathcal{J}\otimes_{\mathbf{R}}\mathfrak{h}\) respectively.
 \end{definition}
 
  \(\mathcal{L}\mathfrak{e}\) 
consists of linear combinations of elements of the form \(\phi_{12\cdots t}\ast e_{12\cdots t}\) for \(\phi_j\in\mathcal{L}\) and \(e_j\in\mathfrak{e}\), \(j=1,2,\cdots,\,t\).   Similarly \(\,\mathcal{L}\mathfrak{f}\,\) is generated by \(\phi_{12\cdots t}\ast f_{12\cdots t}\) with \(\phi_j\in\mathcal{L}\,\) and \(\,f_j\in\mathfrak{f}\), \(j=1,2,\cdots,\,t\,\).     
 Later we shall see that \(\mathcal{L}\mathfrak{e}=\mathcal{L}\otimes_{\mathbf{R}}\mathfrak{e}\) and \(\mathcal{L}\mathfrak{f}=\mathcal{L}\otimes_{\mathbf{R}}\mathfrak{f}\), viewed as real Lie algebras.      
 
 \begin{lemma}
 It holds that 
  \begin{equation}
  [\phi\otimes x,\,\psi\otimes y]=(\phi\psi)\otimes [\,x,\,y\,]\,
  \end{equation}
  for any \(\phi\in\mathcal{K}\,, \psi\in\mathcal{L}\,\), and \(x,\,y\in\mathfrak{g}\,\).
  \end{lemma}
  This is an immediate consequence of  (\ref{Kcommute}) and (\ref{glbracet}).
  
 \begin{lemma}\label{heigen}~~~
 \begin{enumerate}
 \item
 \begin{equation}\label{Kh}
 \mathfrak{h}\subset \mathcal{K}\mathfrak{h}\,\quad\mbox { and }\quad\,\mathcal{K}\mathfrak{h}=\,\mathcal{K}\otimes_{\mathbf{R}}\mathfrak{h}\,.
 \end{equation}
 \item
 \(\mathcal{K}\,\mathfrak{h}\) 
 is a commutative subalgebra of \(\mathcal{L}\,\mathfrak{g}\,\), and 
 \(N(\mathcal{K}\mathfrak{h})\,=\,\mathcal{K}\mathfrak{h}\,\).     That is, \( \mathcal{K}\mathfrak{h}\) is a Cartan subalgebra of \(\mathcal{L}\mathfrak{g}\,\), where \(N(\mathcal{K}\mathfrak{h})=\{\,\xi\in\mathcal{L}\mathfrak{g};\,[\kappa,\xi]\in\mathcal{K}\mathfrak{h},\,\forall\kappa\in\mathcal{K}\mathfrak{h}\,\}\) is the normalizer of \(\mathcal{K}\mathfrak{h}\).   
  \item
  \begin{equation*}
 [\,\mathcal{K}\,\mathfrak{h}\,,\,\mathcal{L}\mathfrak{h}\,]\,=\,0\,, \quad
 [\,\mathcal{K}\,\mathfrak{h}\,,\,\mathcal{L}\,\mathfrak{e}\,]\,=\,\mathcal{L}\mathfrak{e}\,,\quad
 [\,\mathcal{K}\,\mathfrak{h}\,,\,\mathcal{L}\,\mathfrak{f}\,]\,=\,\mathcal{L}\,\mathfrak{f}\,.
 \end{equation*}
 \end{enumerate}
 \end{lemma}
 \begin{proof}~~~
Let \(\phi_i\in\mathcal{K}\) and \(h_i\in\mathfrak{h}\), \(i=1,2\).     We have \(\,[\phi_1\otimes h_1\,,\,\phi_2\otimes h_2\,]\,=\,(\phi_1\phi_2)\,[h_1,h_2]=0\,\).    So \(\mathcal{K}\mathfrak{h}=\mathcal{K}\otimes_{\mathbf{R}} \mathfrak{h}\), and \(\mathcal{K}\mathfrak{h}\) is a commutative subalgebra of \(\mathcal{L}\mathfrak{g}\,\).      
  Now the first assertion follows from the definitions; \(\phi^{+(0,0,1)}\otimes\mathfrak{h}\subset \mathcal{K}\mathfrak{h}\).    We shall prove 
 \(N(\mathcal{K}\mathfrak{h})\,=\,\mathcal{K}\mathfrak{h}\,\).      Let \(\psi\otimes x\in( \mathcal{L}\otimes \mathfrak{g})\cap\,N(\mathcal{K}\mathfrak{h})\).     By hypothesis 
 \([\phi\otimes h,\,\psi\otimes x]=(\phi\psi)\otimes [h,x]\) is in \(\mathcal{K}\mathfrak{h}=\mathcal{K}\otimes\mathfrak{h}\) for any \(\phi\in\mathcal{K}\) and \(h\in\mathfrak{h}\).   Then \(\phi\psi\in \mathcal{K}\) for all \(\phi\in\mathcal{K}\), so \(\psi\in N(\mathcal{K})\), which implies \(\psi\in\mathcal{K}\).  And also  \([h,x]\in \mathfrak{h}\) for all \(h\in\mathfrak{h}\).    \(\mathfrak{h}\) being a Cartan subalgebra it follows \(x\in\mathfrak{h}\).     Hence \(\psi\otimes x\in \mathcal{K}\mathfrak{h}\).      \(N(\mathcal{K}\mathfrak{h})\) being generated by \(( \mathcal{L}\otimes \mathfrak{g})\cap\,N(\mathcal{K}\mathfrak{h})\),  it follows  
 \(N(\mathcal{K}\mathfrak{h})\,=\mathcal{K}\mathfrak{h}\).    We proceed to the proof of the 3rd assertion.   
 Let \(\phi\otimes h\in\mathcal{K}\otimes\mathfrak{h}\) and \(\psi\otimes h^{\prime}\in \mathcal{L}\otimes\mathfrak{h}\) with 
\(\phi\in \mathcal{K}\,,  \psi\in \mathcal{L}\) and \(h\,,\,h^{\prime}\in\mathfrak{h}\).    We have 
\(
[\phi\otimes h,\,\psi\otimes h^{\prime}]=(\phi\psi)\otimes\,[h.h^{\prime}]=0\,
\).
Jacobi identity yields \([\,\phi\otimes h,\,[\psi_1\otimes h_1,\,\psi_2\otimes h_2]\,]=0\) for \(\psi_i\in\mathcal{L},\,h_i\in \mathfrak{h}\), \(i=1,2\), and \(\,[\,\phi\otimes h,\,\psi_{12\cdots t}\ast h_{12\cdots t}\,]=0\,\).   Hence \([\,\mathcal{K}\,\mathfrak{h}\,,\mathcal{L}\mathfrak{h}\,]=0\) .   Let \(\psi \otimes e_j\in\mathcal{L}\otimes \mathfrak{e}\).    We have \([\phi\otimes h_i\,,\,\psi\otimes e_j\,]\,
 =\,(\phi\psi)\otimes [h_i,e_j]\,=(\phi\psi)\otimes c_{ji}e_j\,\in \mathcal{L}\mathfrak{e}\,\).    The similar argument with Jacobi identity yields 
 \begin{equation}\label{eigen}
 [ \phi\otimes h_i,\,\psi_{j_1\cdots j_t}\ast e_{j_1\cdots j_t}\,]\,=\,\left (c_{j_1i}+\cdots c_{j_ti}\right )(\phi \psi_{j_1}\psi_{j_2}\cdots \psi_{j_t})\otimes e_{j_1\cdots j_t} \in \mathcal{L}\mathfrak{e}\,.
 \end{equation}
   So we have \([ \phi\otimes h_i\,,\, \mathcal{L}\mathfrak{e}\,]\subset  \mathcal{L}\mathfrak{e}\), hence \([\,\mathcal{K}\mathfrak{h},\,\mathcal{L}\mathfrak{e}\,]\subset  \mathcal{L}\mathfrak{e}\).    Similarly \(\,[\,\mathcal{K}\,\mathfrak{h},\,\mathcal{L}\mathfrak{f}\,]\,\subset  \mathcal{L}\mathfrak{f}\,\).    Conversely any element 
\(\psi_{j_1\cdots j_t}\ast e_{j_1\cdots j_t}\,\in\,\mathcal{L}\mathfrak{e}\) satisfies the relation (\ref{eigen}) for all \(\phi\otimes h\in \mathcal{K}\mathfrak{h}\) with non-zero \(
 (c_{j_1i}+\cdots c_{j_ti} )\) hence \([\,\mathcal{K}\mathfrak{h}\,,\,   \mathcal{L}\mathfrak{e}\,]\,=\,\mathcal{L}\mathfrak{e}\,\).    Similarly \([\,\mathcal{K}\mathfrak{h}\,,\,   \mathcal{L}\mathfrak{f}\,]\,=\,\mathcal{L}\mathfrak{f}\,\).   
\end{proof}

Let 
\(i:\mathfrak{h}\hookrightarrow\mathcal{K}\mathfrak{h}\) be the embedding (\ref{includ}).   Let \(\,\pi:\mathcal{L}\mathfrak{g}\longrightarrow\mathfrak{g}\) be the projection (\ref{project}) and  \(\pi_o: \mathcal{L}\longrightarrow\mathbf{C}\) be the projection to the homogeneous degree \(0\) terms i.e. the trace of Laurent polynomial type spinors,(\ref{qtrace1}).

 \(\mathcal{K}\mathfrak{h}\) being a Cartan subalgebra of \(\mathcal{L}\mathfrak{g}\),  we shall investigate the adjoint representation 
 \(ad_{\mathcal{K}\mathfrak{h}}\,\in End_{\mathbf{R}}(\mathcal{L}\mathfrak{g})\,\)  
  and the associated weight space decomposition.    
The adjoint representation \(ad_{\mathcal{K}\mathfrak{h}}\) is written as follows:  \begin{eqnarray}\label{adK}
 ad_{\phi\otimes h}\,(\psi\otimes x)&=&\,(\phi\, \psi)\,\otimes ad_hx\,,\\[0.2cm]
ad_{\phi\otimes h}(\psi_{1\cdots m}\ast x_{1\cdots m})&=&
\sum_{i=1}^m\,[\psi_1\otimes x_1, [\psi_2\otimes x_2,\,\cdots[\,(\phi\psi_i )\otimes ad_hx_i\,, [\,\psi_{i+1}\otimes x_{i+1},\cdots,\psi_m\otimes x_m]\cdots],\nonumber
\end{eqnarray}
for \(\phi\otimes h\in\mathcal{K}\mathfrak{h}\,\) and  \(\psi\otimes x\,\), \(\,
\psi_{1\cdots m}\ast x_{1\cdots m}\in \mathcal{L}\mathfrak{g}\,\).   

To a \(\lambda\,\in Hom_{\mathbf{R}}(\mathcal{K}\mathfrak{h}\,, \mathcal{L})\) there corresponds 
  \(\alpha=\pi_o\circ \lambda\circ i\,\in Hom(\,\mathfrak{h},\,\mathbf{C}) =\mathfrak{h}^{\ast}\).     Conversely given \(\alpha\in \mathfrak{h}^{\ast}\), we have \(\lambda \,\in Hom_{\mathbf{R}}(\mathcal{K}\mathfrak{h}\,, \mathcal{L})\) that associates  to \(\kappa=\phi\otimes h\in \mathcal{K}\mathfrak{h}\,\) 
 with an element 
\(\alpha(h)\phi\in \mathcal{L}\).   Then  \(\lambda(ih)=\alpha(h)\phi^{+(0,0,1)}\), that is,  \(\alpha=\pi_o\circ\lambda\circ i\).   
We put 
\begin{equation}\label{dualofKh}
Hom_{\mathbf{R}}(\,\mathcal{K}\mathfrak{h}\,,\,\mathcal{L})\,=\,\left\{\lambda\,: \mathcal{K}\mathfrak{h}\,\longrightarrow \mathcal{L}\,,\quad \lambda(\kappa)= \pi_o\circ \lambda\circ i(\,h)\,\phi,\quad \forall \kappa=\phi\otimes h\,\right\}.
\end{equation}
 \(\lambda(\kappa)\) yields a one dimensional representation of the Lie algebra \(\mathcal{L} \mathfrak{g}\,\), that is 
 given by the multiplication of \(\lambda(\kappa)=\alpha(h)\phi\in \mathcal{L}\,\):
 \[\mathcal{L}\mathfrak{g}\ni \xi= \psi\otimes x\longrightarrow \lambda(\kappa)\xi=\alpha(h)\,(\phi\psi)\otimes x\,\in \mathcal{L}\mathfrak{g}\,,\qquad  \kappa=\phi\otimes h\,.\]

 For each \(\lambda\in Hom_{\mathbf{R}}(\,\mathcal{K}\,\mathfrak{h}\,,\,\mathcal{L}) \), we define  
 \begin{equation}
  (\mathcal{L}\mathfrak{g})_{\lambda}=\{\xi\in \mathcal{L}\mathfrak{g}\,;\quad 
   ad_{\kappa}\,\xi\,=\,\lambda(\kappa)\,\xi\,,\quad \forall \kappa\in\mathcal{K}\mathfrak{h}\}\,.
  \end{equation}
  \(\lambda\in Hom_{\mathbf{R}}(\,\mathcal{K}\mathfrak{h}\,,\,\mathcal{L}) \) is called a {\it weight} of \(ad_{\mathcal{K}\mathfrak{h}}\) whenever  \((\mathcal{L}\mathfrak{g})_{\lambda}\neq 0\).    \((\mathcal{L}\mathfrak{g})_{\lambda}\) is called the {\it weight space of weight \(\lambda\)}.      The set of the non-zero weights is denoted by 
   \[{\Phi}_{\mathcal{L}}=\,\{\lambda\in \, Hom_{\mathbf{R}}(\,\mathcal{K}\mathfrak{h}\,,\,\mathcal{L}) \,;\,\lambda\neq 0\,\}\,.\]
The relevance of the root space decomposition \(\mathfrak{g}=\sum_{\alpha \in \Phi}\,(\mathfrak{g})_{\alpha}\,\) is shown by the following proposition.

\begin{proposition}~~~
 \(\mathcal{L}\mathfrak{g}\) is the direct sum of the weight spaces:
\begin{equation}\label{Khdecomp}
\mathcal{L}\mathfrak{g}\,=\,(\mathcal{L}\mathfrak{g})_0\,\oplus_{\lambda\in\Phi_{\mathcal{L}}}
\,(\mathcal{L}\mathfrak{g})_{\lambda}\,.
\end{equation}
\end{proposition}

We have 
\begin{equation}
ad_{\kappa}\,[\,\xi_1\,,\,\xi_2\,]\,=\,[\,ad_{\kappa}\xi_1\,,\,\xi_2\,]\,+\,[\,\xi_1\,,\,ad_{\kappa}\xi_2\,]\, ,
\end{equation}
for all \(\kappa\in\mathcal{K}\mathfrak{h}\,\), \(\xi_i\in\mathcal{L}\mathfrak{g}\), \(i=1,2\).
This follows inductively from the definition of \(ad_{\mathcal{K}\mathfrak{h}}\,\), (\ref{adK}).   \\

It holds that if \(\xi,\,\eta\in \,\mathcal{L}\mathfrak{g}\) are weight vectors of weights \(\lambda,\,\mu\) then \([\xi\,,\,\eta]\) is a weight vector of weight 
\(\lambda+\mu\,\):   
\begin{equation}
[\,(\mathcal{L}\mathfrak{g})_{\lambda}\,,\,(\mathcal{L}\mathfrak{g})_{\mu}\,]\,\subset\,(\mathcal{L}\mathfrak{g})_{\lambda+\mu}\,.
\end{equation}

\begin{proposition}\label{extofad}
The adjoint representation \(ad_{\mathfrak{h}}\) of \(\mathfrak{g}\) extends to the adjoint representation  \(ad_{\mathcal{K}\mathfrak{h}}\) of \(\mathcal{L}\mathfrak{g}\)
.
\end{proposition}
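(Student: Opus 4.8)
The plan is to make precise the sense in which $ad_{\mathcal{K}\mathfrak{h}}$ extends $ad_{\mathfrak{h}}$ and then verify it directly from the explicit formula (\ref{adkonL}). First I would record two elementary facts about $I=\phi^{+(0,0,1)}$: by the multiplication rule (\ref{spinormultip}) one has $I\cdot\phi=\phi$ for every $\phi$, so $I$ is the unit of the associative algebra $\mathcal{L}$; and $\sigma I=I$, $\tau I=I$, so $I\in\mathcal{K}=\mathcal{L}^r_0$ by Definition \ref{K}. Consequently the embedding $i:\mathfrak{g}\to\mathcal{L}\mathfrak{g}$, $X\mapsto I\otimes X$ of (\ref{includ}), is a homomorphism of Lie algebras carrying $\mathfrak{h}$ into $\mathcal{K}\mathfrak{h}=\mathcal{K}\otimes_{\mathbf{R}}\mathfrak{h}$ (Lemma \ref{heigen}). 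The word \emph{extension} then means exactly that the square relating $ad_{\mathfrak{h}}:\mathfrak{h}\to End_{\mathbf{C}}(\mathfrak{g})$ and $ad_{\mathcal{K}\mathfrak{h}}:\mathcal{K}\mathfrak{h}\to End_{\mathcal{L}}(\mathcal{L}\mathfrak{g})$ through $i$ commutes: the operator $ad_{i(h)}$ restricted to $i(\mathfrak{g})$ corresponds, under $i$, to $ad_h$ on $\mathfrak{g}$.

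The core is a one-line computation with (\ref{adkonL}). For $h\in\mathfrak{h}$ and $\psi\otimes x\in\mathcal{L}\otimes_{\mathbf{C}}\mathfrak{g}$,
\[
ad_{I\otimes h}(\psi\otimes x)=(I\cdot\psi)\otimes ad_h\,x=\psi\otimes ad_h\,x,
\]
so $ad_{I\otimes h}$ restricts on $\mathcal{L}\otimes_{\mathbf{C}}\mathfrak{g}$ to $\mathrm{id}_{\mathcal{L}}\otimes ad_h$, and in particular
\[
ad_{I\otimes h}(I\otimes x)=I\otimes[h,x]=i(ad_h\,x),
\]
which is precisely the asserted commutativity on the generators $i(\mathfrak{g})$. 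If moreover $x$ lies in a root space $\mathfrak{g}_\alpha$, the same computation gives $ad_{\phi\otimes h}(I\otimes x)=\alpha(h)\,\phi\cdot(I\otimes x)$, so $i(\mathfrak{g}_\alpha)\subset(\mathcal{L}\mathfrak{g})_{\lambda_\alpha}$ for the weight $\lambda_\alpha(\phi\otimes h)=\alpha(h)\phi$; this is consistent with (\ref{dualofKh}) because $\pi_o(\lambda_\alpha(I\otimes h))=\alpha(h)$, and it exhibits the root decomposition of $\mathfrak{g}$ sitting inside the weight decomposition (\ref{Khdecomp}).

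The remaining point, and the only place requiring care, is that $\mathcal{L}\mathfrak{g}$ is strictly larger than $\mathcal{L}\otimes_{\mathbf{C}}\mathfrak{g}$ (see the Examples following the definition of the current algebra), so one must confirm the agreement on the bracket-generated part as well. Here I would invoke the derivation identity $ad_\kappa[\xi_1,\xi_2]=[ad_\kappa\xi_1,\xi_2]+[\xi_1,ad_\kappa\xi_2]$ established after (\ref{Khdecomp}): since $ad_h$ is a derivation of $\mathfrak{g}$ and $\mathcal{L}\mathfrak{g}$ is generated as a Lie algebra by $\mathcal{L}\otimes_{\mathbf{C}}\mathfrak{g}$, both $ad_{I\otimes h}$ and the Leibniz-propagation of $\mathrm{id}_{\mathcal{L}}\otimes ad_h$ are derivations of $\mathcal{L}\mathfrak{g}$ that agree on a generating set, hence coincide everywhere. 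This shows that $h\mapsto ad_{I\otimes h}$ is the extension of $h\mapsto ad_h$ under $i$, which is the statement of the proposition. I expect the main (though mild) obstacle to be exactly this step of propagating the identity from $\mathcal{L}\otimes_{\mathbf{C}}\mathfrak{g}$ to all of $\mathcal{L}\mathfrak{g}$; everything else is immediate once $I$ is recognized as the unit of $\mathcal{L}$.
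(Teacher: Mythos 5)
Your proposal is correct and follows essentially the same route as the paper: both hinge on recognizing $I=\phi^{+(0,0,1)}$ as the unit of $\mathcal{L}$ lying in $\mathcal{K}$, using the embedding $i(X)=I\otimes X$, and reading off the compatibility $ad_{I\otimes h}=\mathrm{id}_{\mathcal{L}}\otimes ad_h$ directly from formula (\ref{adkonL}). Your final step, propagating the identity from $\mathcal{L}\otimes_{\mathbf{C}}\mathfrak{g}$ to all of $\mathcal{L}\mathfrak{g}$ via the derivation property, is a slightly more explicit treatment of what the paper builds into the second line of (\ref{adkonL}), which already defines the action on iterated brackets.
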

\begin{proof}~~~ 
 \(\phi^{+(0,0,1)}\in\mathcal{K}\) and 
the abbreviation \(\phi^{+(0,0,1)}\otimes \mathfrak{h}\,\simeq \mathfrak{h}\) imply the embedding \(i:\,\mathfrak{h}\longrightarrow\,\mathcal{K}\mathfrak{h}\).    The  adjoint representation  \(ad_{\mathcal{K}\mathfrak{h}}\) restricts to the adjoint representation  of \(\mathfrak{h}\) on \(\mathfrak{g}\) if we take \(\phi=\psi=\phi^{+(0,0,1)}\) in (\ref{adK}).    Then we have 
\begin{equation}
ad_h\circ \pi\,=\,
\pi \circ ad_{ih}\,,\quad \forall h\in \mathfrak{h}.
\end{equation}
Conversely we see from (\ref{adK}) that the action of the representation \(ad_{\mathcal{K}\mathfrak{h}}\) on \(\mathcal{L}\mathfrak{g}\)  comes from \(ad_{\mathfrak{h}}\in End(\mathfrak{g})\).    
If \(ad_{h}y=0\,\) for a \(h\in\mathfrak{h}\) and a \(y\in\mathfrak{g}\,\)  then \(ad_{\phi\otimes h}\psi\otimes y=0\) for all \(\phi\in \mathcal{K}\) and \(\psi\in\mathcal{L}\) .   In fact, since \([\mathcal{K},\mathcal{L}]=0\) we have  \([\phi\otimes h,\,\psi\otimes y]=(\phi\cdot\psi)\otimes [h,y]=0\).
\end{proof}

\begin{theorem}~~~
\begin{enumerate}
\item
The root spaces of the adjoint representation \(ad_{\mathcal{K}\mathfrak{h}}\) on \(\mathcal{L}\mathfrak{g}\) and that of \(ad_{\mathfrak{h}}\) on \(\mathfrak{g}\) correspond bijectively:  \(\Phi_{\mathcal{L}}\simeq\Phi\).
\item
For \(\lambda\in\Phi_{\mathcal{L}}\,\), hence \(\alpha=\pi_o\circ\lambda\circ i\in\mathfrak{h}^{\ast}\), 
\begin{equation}\label{weightvectors}
(\mathcal{L}\mathfrak{g})_{\lambda}\,=\,\mathcal{L}\otimes\,\mathfrak{g}_{\alpha}\,.
\end{equation}
And 
\begin{equation}\label{weight0vectors}
(\mathcal{L}\mathfrak{g})_0\,=\,\mathcal{L}\mathfrak{h}\,= \mathcal{K}\mathfrak{h}\,\oplus\,\mathcal{J}\mathfrak{h}\,.
\end{equation}
\item
\(\mathcal{L}\mathfrak{g}\) is the direct sum of the weight spaces:
\begin{equation}
\mathcal{L}\mathfrak{g}\,=\,\mathcal{L}\mathfrak{h}\ \oplus\,\oplus_{\lambda\in \Phi}\,(\mathcal{L}\otimes \,\mathfrak{g}_{\lambda})\,.
\end{equation}

\end{enumerate}
\end{theorem}
 \begin{proof} 
 Let \(\lambda\in \Phi_{\mathcal{L}}\,\).   There exists a weight vector \(\xi\in\mathcal{L}\mathfrak{g}\) with the weight \(\lambda\):    
 \(\,[\phi\otimes h\,,\,\xi\,]\,=\,\lambda(\phi\otimes h)\xi\,\) for any \( \phi\otimes h\in\mathcal{K}\mathfrak{h}\,\).   We define  \(\check\lambda\in Hom(\,\mathfrak{h}\,, \mathbf{R}\,)\) by the formula 
  \( \check\lambda(h)=\lambda(\phi^{+(0,0,1)}\otimes h)\).    Then \(\check\lambda\) becomes a root of the representation \(ad_{\mathfrak{h}}\) on \(\mathfrak{g}\):
   \(\,[h,x]=[\, \phi^{+(0,0,1)}\otimes h\,,\,\phi^{+(0,0,1)}\otimes x\,]\,=\,\check\lambda(h)x \,\).       Conversely let \( \xi=\psi_{1\cdots m}\ast x_{1\cdots m}\in \mathcal{L}\mathfrak{g}\).   We  suppose that each \(x_i\in \mathfrak{g}\) is a weight vector with root \(\beta_i\in \Phi\), \(i=1,\cdots,m\).    General elements of \(\mathcal{L}\mathfrak{g}\) are linear combinations of such vectors.    It follows from (\ref{adK}) that 
 \[ad_{\phi\otimes h}\xi=\,\left(\Sigma_{i=1}^m\beta_i(h)\phi\right)\,\xi\,,\quad\forall \phi\otimes h\in\mathcal{K}\mathfrak{h}.\]
Hence \(\Sigma_{i=1}^m\beta_i(h)\phi \in \Phi_{\mathcal{L}}\), and \(\xi\) is a weight vector of \(ad_{\phi\otimes h}\).   The relation extends linearly to \(
  \mathcal{L}\mathfrak{g}\).    Thus we have proved the first assertion.   From (\ref{adK}) we have 
\( \mathcal{L}\otimes\mathfrak{g}_\alpha\,\subset \,(\mathcal{L}\mathfrak{g})_{\alpha}\,\) for any \(\alpha\in\Phi\).     
Lemma \ref{heigen} shows that 
    \(\mathcal{L}\mathfrak{h}\,\subset \,(\mathcal{L}\mathfrak{g})_{0}\).   Then        
(\ref{eigen})  yields that  \(\phi_{i_1i_2\cdots i_t}\otimes e_{i_1i_2\cdots i_t}\) and  \(\phi_{i_1i_2\cdots i_t}\otimes f_{i_1i_2\cdots i_t}\) are 
 weight vectors.   Thus all Lie products of generators 
\(\left\{\,\phi\otimes e_i\,,\,\phi\otimes f_i\,,\,\phi\otimes h_i\,;\, \phi\in \mathcal{L}\,,\, i=1,\cdots,l\,\,\right\}\) 
 are weight vectors.    Since every element of \(\mathcal{L}\mathfrak{g}\) is a linear combination of products of these weight vectors we deduce from (\ref{Khdecomp}) and the fact \(\Phi\simeq \Phi_{\mathcal{L}}\) that  
 \begin{equation}\label{sum}
 \mathcal{L}\mathfrak{g}\,=\,(\mathcal{L}\mathfrak{g})_0\,\oplus\,\oplus_{\alpha\in\Phi}(\mathcal{L}\mathfrak{g})_{\alpha}\,.
 \end{equation}
    Now the simple roots \(\alpha_1,\cdots,\alpha_l\in \Phi\) are linearly independent, so the only monomials which have weight \(\alpha_j\) are the weight vectors of \(\mathcal{L}\otimes\mathfrak{g}_{\alpha_j}\).   We conclude 
   \begin{equation}
   \,(\mathcal{L}\mathfrak{g})_{\alpha_j}\,=\,\mathcal{L}\otimes_{\mathbf{C}}\mathfrak{g}_{\alpha_j}\,.
   \end{equation}
 Hence \( \,(\mathcal{L}\mathfrak{g})_{\alpha}\,=\,\mathcal{L}\otimes_{\mathbf{C}}\mathfrak{g}_{\alpha}\,\) for all \(\alpha\in \Phi\).    Therefore (\ref{sum}) becomes 
 \begin{equation}
 \mathcal{L}\mathfrak{g}\,=\,(\mathcal{L}\mathfrak{g})_{0}\,\oplus\,\oplus_{\alpha\in\Phi}(\mathcal{L}\otimes_{\mathbf{C} }\mathfrak{g}_{\alpha})\,.
 \end{equation}
Now we shall prove \((\mathcal{L}\mathfrak{g})_{0}=\mathcal{L}\mathfrak{h}\,\).   We regard \(\mathcal{L}\mathfrak{g}\) as a \(\mathcal{K}\mathfrak{h}\)-module.   Hence  \(\mathcal{L}\mathfrak{h}\) is a \(\mathcal{K}\mathfrak{h}\)-submodule.
\(\mathcal{L}\mathfrak{h}\) is contained in  \((\mathcal{L}\mathfrak{g})_0\) by Lemma \ref{heigen}.   If \(\,\mathcal{L}\mathfrak{h}\neq (\mathcal{L}\mathfrak{g})_0\,\) the 
\( \mathcal{K}\mathfrak{h}\)-module 
\( (\mathcal{L}\mathfrak{g})_0/\mathcal{L}\mathfrak{h} \) will have a 1-dimensional submodule \(M/\mathcal{L}\mathfrak{h}\) on which \(\mathcal{K}\mathfrak{h}\) acts with weight \(0\).   That is, \([\,\mathcal{K}\mathfrak{h},\,M/
\mathcal{L}\mathfrak{h}\,]=0\).   Then  \([\,\mathcal{K}\mathfrak{h},\,M\,]\subset\,
\mathcal{L}\mathfrak{h}\,\) and  \(M\) is a
 \( \mathcal{K}\mathfrak{h}\)-submodule of \(\mathcal{L}\mathfrak{h}\).   That is a contradiction.
\end{proof}

  We know that any weight \(\lambda\in \Phi\) is of the form \(\sum_{i=1}^l\,k_i\alpha_i\), \(k_i\in\mathbf{Z}\).   Moreover a non-zero weight \(\lambda\) has the form \(\lambda=\sum_{i=1}^lk_i\alpha_i,\,k_i\in \mathbf{Z}\), with all \(k_i\geq 0\) or all \(k_i\leq 0\).     Therefore 
  \begin{eqnarray}
  \mathcal{L}\mathfrak{e}&=&\sum_{\lambda\in\Phi^+}\mathcal{L}\otimes_{\mathbf{R}}\mathfrak{g}_{\lambda}\\[0.2cm]
  \mathcal{L}\mathfrak{f}&=&\sum_{\lambda\in\Phi^-}\mathcal{L}\otimes_{\mathbf{R}}\mathfrak{g}_{\lambda}
  \end{eqnarray}

From the above discussion we have the following 
\begin{theorem}\label{triangulardecomp}
The \(\mathfrak{g}\)-current algebra \(\mathcal{L}\mathfrak{g}\) has the following triangular decomposition 
 \begin{eqnarray*}
\mathcal{L}\mathfrak{g}&=&\,\mathcal{L}\mathfrak{e}\,\oplus\,\mathcal{L}\mathfrak{h}\,\oplus\,
\mathcal{L}\mathfrak{f}\,.
\\[0.2cm]
\mathcal{L}\mathfrak{e}\,&=&\,
\mathcal{L}\otimes\,\mathfrak{e}\,,\quad \mathcal{L}\mathfrak{f}\,=\, 
\mathcal{L}\otimes\,\mathfrak{f}\,,\\[0.2cm]
\mathcal{L}\mathfrak{h}\,&=&\,\mathcal{K}\mathfrak{h}\,\oplus\,\mathcal{J}\mathfrak{h}\,.
\end{eqnarray*}
\end{theorem}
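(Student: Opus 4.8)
The plan is to obtain all three assertions by regrouping the $\mathcal{K}\mathfrak{h}$-weight space decomposition already in hand. The preceding proposition supplies the direct sum $\mathcal{L}\mathfrak{g} = (\mathcal{L}\mathfrak{g})_0 \oplus \bigoplus_{\lambda\in\Phi}(\mathcal{L}\mathfrak{g})_\lambda$ together with the identifications $(\mathcal{L}\mathfrak{g})_0 = \mathcal{L}\mathfrak{h}$ and $(\mathcal{L}\mathfrak{g})_\lambda = \mathcal{L}\otimes_{\mathbf{C}}\mathfrak{g}_\lambda$ for $\lambda\in\Phi$, as well as the bijection $\Phi_{\mathcal{L}}\simeq\Phi$. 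Since weight spaces attached to distinct weights are independent, splitting the index set as $\Phi = \Phi^{+}\sqcup\Phi^{-}$ immediately yields the direct sum $\mathcal{L}\mathfrak{g} = \big(\bigoplus_{\lambda\in\Phi^{+}}\mathcal{L}\otimes\mathfrak{g}_\lambda\big)\oplus\mathcal{L}\mathfrak{h}\oplus\big(\bigoplus_{\lambda\in\Phi^{-}}\mathcal{L}\otimes\mathfrak{g}_\lambda\big)$; the real work is to recognise the two outer blocks as $\mathcal{L}\mathfrak{e}$ and $\mathcal{L}\mathfrak{f}$.

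First I would prove $\mathcal{L}\mathfrak{e} = \mathcal{L}\otimes_{\mathbf{C}}\mathfrak{e}$. The inclusion $\subseteq$ is the easy direction: $\mathcal{L}\mathfrak{e}$ is generated by the positive-weight vectors $\mathcal{L}\otimes\mathfrak{e}$, weights add under the bracket (\ref{glbracet}), and the sum of two positive roots is never $0$ and is either again a positive root or no weight at all, its weight space then vanishing by $\Phi_{\mathcal{L}}\simeq\Phi$; hence the generated subalgebra stays inside $\bigoplus_{\lambda\in\Phi^{+}}(\mathcal{L}\mathfrak{g})_\lambda = \mathcal{L}\otimes\mathfrak{e}$, using $\mathfrak{e} = \bigoplus_{\lambda\in\Phi^{+}}\mathfrak{g}_\lambda$. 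For the reverse inclusion I would show by induction on the height of $\lambda\in\Phi^{+}$ that $\mathcal{L}\otimes\mathfrak{g}_\lambda\subseteq\mathcal{L}\mathfrak{e}$. The height-one case is $\mathcal{L}\otimes\mathbf{C}e_i$, a generator. For the inductive step, write $e_\lambda$ as a nonzero multiple of $[e_i,e_{\lambda'}]$ with $\lambda'$ a positive root of smaller height, and exploit the multiplicative identity $I=\phi^{+(0,0,1)}$: since $I\chi=\chi I=\chi$ the two matrix-product terms in (\ref{glbracet}) recombine into a Lie bracket, giving $[\,I\otimes e_i,\,\chi\otimes e_{\lambda'}\,] = \chi\otimes[e_i,e_{\lambda'}]$, a nonzero multiple of $\chi\otimes e_\lambda$, for every $\chi\in\mathcal{L}$. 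The identical argument over $\Phi^{-}$ gives $\mathcal{L}\mathfrak{f} = \mathcal{L}\otimes_{\mathbf{C}}\mathfrak{f}$.

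Next I would handle the zero-weight block. By the preceding proposition $(\mathcal{L}\mathfrak{g})_0 = \mathcal{L}\mathfrak{h}$, and Lemma \ref{heigen} already supplies $\mathcal{L}\mathfrak{h} = \mathcal{K}\mathfrak{h} + \mathcal{K}^{\bot}\mathfrak{h}$ with $\mathcal{K}\mathfrak{h} = \mathcal{K}\otimes_{\mathbf{R}}\mathfrak{h}$. To see this sum is direct I would track the coefficient spinor: elements of $\mathcal{K}\mathfrak{h}$ have coefficient in $\mathcal{K}$, whereas $\mathcal{K}^{\bot}\mathfrak{h} = [\mathcal{L}\mathfrak{h},\mathcal{L}\mathfrak{h}]$ is generated by brackets whose coefficient spinors lie in $[\mathcal{L},\mathcal{L}] = \mathcal{K}^{\bot}$; since $\mathcal{K}\cap\mathcal{K}^{\bot} = 0$ the two summands meet only in $0$, giving $\mathcal{L}\mathfrak{h} = \mathcal{K}\mathfrak{h}\oplus\mathcal{K}^{\bot}\mathfrak{h}$. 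Assembling the three blocks then produces the asserted triangular decomposition.

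The only step carrying genuine content is the equality $\mathcal{L}\mathfrak{e} = \mathcal{L}\otimes\mathfrak{e}$, namely that forming the generated Lie subalgebra of $\mathcal{L}\otimes\mathfrak{e}$ produces nothing new despite the noncommutativity of the spinor product. I expect the delicate point to be controlling the matrix products $X_1X_2$ and $X_2X_1$ that appear separately in the bracket (\ref{glbracet}): individually these are not root vectors, and it is precisely the relation $[\mathcal{K},\mathcal{L}]=0$ together with the existence of the unit $I\in\mathcal{K}$ that forces them to recombine into the Lie bracket $[X_1,X_2]$, confining the result to a single root space $\mathcal{L}\otimes\mathfrak{g}_\lambda$.
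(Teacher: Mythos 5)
Your argument is correct and follows essentially the same route as the paper: both derive the theorem by regrouping the \(\mathcal{K}\mathfrak{h}\)-weight space decomposition \((\mathcal{L}\mathfrak{g})_0=\mathcal{L}\mathfrak{h}\), \((\mathcal{L}\mathfrak{g})_\lambda=\mathcal{L}\otimes\mathfrak{g}_\lambda\) according to the sign of the root, and both use Lemma \ref{heigen} together with \(\mathcal{L}=\mathcal{K}\oplus\mathcal{K}^{\bot}\) for the zero-weight block. The only remark is that your height induction for \(\mathcal{L}\otimes\mathfrak{g}_\lambda\subseteq\mathcal{L}\mathfrak{e}\) is superfluous, since \(\mathcal{L}\otimes\mathfrak{g}_\lambda\subseteq\mathcal{L}\otimes\mathfrak{e}\) already lies in the generating set of \(\mathcal{L}\mathfrak{e}\) by definition; the genuine content is the opposite inclusion \(\mathcal{L}\mathfrak{e}\subseteq\mathcal{L}\otimes\mathfrak{e}\), which you handle correctly by weight additivity and the vanishing of weight spaces outside \(\Phi_{\mathcal{L}}\simeq\Phi\).
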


It follows from (\ref{Kh}) that 
\begin{corollary}\label{nonproduct}
\begin{equation}
\mathcal{L}\mathfrak{g}\ominus\,(\,\mathcal{L}\otimes\mathfrak{g}\,)\,=\,\mathcal{J}\mathfrak{h}\,.
\end{equation}
\end{corollary}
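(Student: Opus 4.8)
The plan is to read the statement off the triangular decomposition of Theorem~\ref{triangulardecomp}, localising the whole comparison to the Cartan (weight~$0$) direction. First I would split the right-hand side: since $\otimes_{\mathbf{C}}$ distributes over the direct sum $\mathfrak{g}=\mathfrak{e}\oplus\mathfrak{h}\oplus\mathfrak{f}$, one has $\mathcal{L}\otimes_{\mathbf{C}}\mathfrak{g}=(\mathcal{L}\otimes_{\mathbf{C}}\mathfrak{e})\oplus(\mathcal{L}\otimes_{\mathbf{C}}\mathfrak{h})\oplus(\mathcal{L}\otimes_{\mathbf{C}}\mathfrak{f})$, whereas Theorem~\ref{triangulardecomp} gives $\mathcal{L}\mathfrak{g}=\mathcal{L}\mathfrak{e}\oplus\mathcal{L}\mathfrak{h}\oplus\mathcal{L}\mathfrak{f}$ with $\mathcal{L}\mathfrak{e}=\mathcal{L}\otimes_{\mathbf{C}}\mathfrak{e}$ and $\mathcal{L}\mathfrak{f}=\mathcal{L}\otimes_{\mathbf{C}}\mathfrak{f}$. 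Comparing the two decompositions, the positive-root block and the negative-root block coincide term by term. Hence every direction of $\mathcal{L}\mathfrak{g}$ not already present in $\mathcal{L}\otimes_{\mathbf{C}}\mathfrak{g}$ must sit in the weight-$0$ summand, and the corollary reduces to establishing $\mathcal{L}\mathfrak{h}\ominus(\mathcal{L}\otimes_{\mathbf{C}}\mathfrak{h})=\mathcal{K}^{\bot}\mathfrak{h}$.

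For this Cartan comparison I would invoke $\mathcal{L}\mathfrak{h}=\mathcal{K}\mathfrak{h}\oplus\mathcal{K}^{\bot}\mathfrak{h}$ (last line of Theorem~\ref{triangulardecomp}) together with $\mathcal{K}\mathfrak{h}=\mathcal{K}\otimes_{\mathbf{R}}\mathfrak{h}$ (Lemma~\ref{heigen}). The key containment is $\mathcal{K}\mathfrak{h}\subseteq\mathcal{L}\otimes_{\mathbf{C}}\mathfrak{h}$: a generator $\phi\otimes h$ with $\phi\in\mathcal{K}\subset\mathcal{L}$ and $h\in\mathfrak{h}$ is \emph{a fortiori} a $\mathbf{C}$-tensor, and the identity $\phi\otimes(\sqrt{-1}h)=(\sqrt{-1}\phi)\otimes h$ with $\sqrt{-1}\phi\in\mathcal{L}$ shows that the real tensor $\mathcal{K}\otimes_{\mathbf{R}}\mathfrak{h}$ embeds into $\mathcal{L}\otimes_{\mathbf{C}}\mathfrak{h}$. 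Thus $\mathcal{L}\otimes_{\mathbf{C}}\mathfrak{h}$ already contains the central summand $\mathcal{K}\mathfrak{h}$, so adjoining $\mathcal{K}^{\bot}\mathfrak{h}$ exhausts the weight-$0$ space: $(\mathcal{L}\otimes_{\mathbf{C}}\mathfrak{h})+\mathcal{K}^{\bot}\mathfrak{h}=\mathcal{K}\mathfrak{h}+\mathcal{K}^{\bot}\mathfrak{h}=\mathcal{L}\mathfrak{h}$. Combined with the first paragraph this yields $\mathcal{L}\mathfrak{g}=(\mathcal{L}\otimes_{\mathbf{C}}\mathfrak{g})+\mathcal{K}^{\bot}\mathfrak{h}$, and the directions genuinely outside $\mathcal{L}\otimes_{\mathbf{C}}\mathfrak{g}$ are exactly those of $\mathcal{K}^{\bot}\mathfrak{h}$, which is the asserted identity.

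\textbf{Main obstacle.} The delicate point is to make the symbol $\ominus$ precise, namely to check that $\mathcal{K}^{\bot}\mathfrak{h}$ supplies all the missing elements and nothing spurious. Note that $\mathcal{K}^{\bot}\mathfrak{h}$ meets $\mathcal{L}\otimes_{\mathbf{C}}\mathfrak{g}$ already in its generating set $\mathcal{K}^{\bot}\otimes_{\mathbf{R}}\mathfrak{h}$, so the equality above is a sum and not an internal direct sum; what must be verified is that the part of $\mathcal{K}^{\bot}\mathfrak{h}$ lying outside $\mathcal{L}\otimes_{\mathbf{C}}\mathfrak{g}$ is precisely the new directions of $\mathcal{L}\mathfrak{g}$. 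These arise from the bracket $[\psi_1\otimes h_1,\psi_2\otimes h_2]=[\psi_1,\psi_2]\otimes h_1h_2$, whose matrix factor $h_1h_2$ is a product inside the associative algebra generated by $\mathfrak{h}$ and so leaves $\mathfrak{g}$ (for instance $h^2$ contributes the identity matrix in type $A_1$). This is exactly the mechanism of the anticommutator examples $\sqrt{-1}\,I\otimes(X_1X_2+X_2X_1)$ at the end of Subsection~4.2: after projecting the symmetric matrix product onto $\mathfrak{g}$ and onto its complement, the $\mathfrak{g}$-part stays in $\mathcal{L}\otimes_{\mathbf{C}}\mathfrak{g}$ while the complementary (identity-type) part is realised in $\mathcal{K}^{\bot}\mathfrak{h}$. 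Carrying out this bookkeeping for a general simple $\mathfrak{g}\subset\mathfrak{sl}(n,\mathbf{C})$ — showing that the matrix directions produced by $\mathcal{K}^{\bot}\mathfrak{h}$ match exactly the complement of $\mathfrak{h}$ among the weight-$0$ matrix factors — is the substantive step.
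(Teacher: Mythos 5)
Your argument is essentially the paper's own (implicit) one: the corollary is stated without proof as an immediate consequence of Theorem \ref{triangulardecomp}, obtained exactly as you do by comparing $\mathcal{L}\otimes_{\mathbf{C}}\mathfrak{g}=(\mathcal{L}\otimes_{\mathbf{C}}\mathfrak{e})\oplus(\mathcal{L}\otimes_{\mathbf{C}}\mathfrak{h})\oplus(\mathcal{L}\otimes_{\mathbf{C}}\mathfrak{f})$ with the triangular decomposition, so that the discrepancy is confined to $\mathcal{L}\mathfrak{h}=\mathcal{K}\mathfrak{h}\oplus\mathcal{K}^{\bot}\mathfrak{h}$ with $\mathcal{K}\mathfrak{h}=\mathcal{K}\otimes_{\mathbf{R}}\mathfrak{h}\subset\mathcal{L}\otimes_{\mathbf{C}}\mathfrak{h}$. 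Your closing caveat --- that $\mathcal{K}^{\bot}\mathfrak{h}$ meets $\mathcal{L}\otimes_{\mathbf{C}}\mathfrak{g}$ in $\mathcal{K}^{\bot}\otimes_{\mathbf{R}}\mathfrak{h}$, so the stated equality must be read as $\mathcal{L}\mathfrak{g}=(\mathcal{L}\otimes_{\mathbf{C}}\mathfrak{g})+\mathcal{K}^{\bot}\mathfrak{h}$ with the excess generated inside $\mathcal{K}^{\bot}\mathfrak{h}$ --- is a fair observation about the paper's loose use of $\ominus$ rather than a gap in your reduction, and the further ``bookkeeping'' you describe is not required beyond what Theorem \ref{triangulardecomp} already supplies.
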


\subsubsection{ Symmmetric invariant bilinear form on \(\mathcal{L}\mathfrak{g}\)}

The  symmetric invariant bilinear form on \(\mathcal{L}\) is defined by
\begin{equation}
(\phi_1\,\vert\,\phi_2\,)\,=\,qRes\,(\phi_1\phi_2\,)\,.
\end{equation}
from Definition \ref{qres}.     
The invariant bilinear form on \(\mathcal{L}\mathfrak{g}\) is given by the following formula:
\begin{equation}\label{invariantform}
\left( \phi_1\otimes x_1\,\vert \,\phi_2\otimes x_2\right)\,=\,(\phi_1\vert \phi_2)\,(x_1\vert x_2)\,,\quad \mbox{ for } \,\phi_i\in\mathcal{L},\,x_i\in \mathfrak{g}\,,\,i=1,2\,,\end{equation}
where \((x_1\vert x_2)\) is the Killing form of \(\mathfrak{g}\).   
In fact, we have
 \begin{eqnarray*}
 &(\,[\phi_1\otimes x_1,\,\phi_2\otimes x_2\,]\,\vert\, \phi_3\otimes x_3\,)\,
 =\,(\,\phi_1 \phi_2\otimes\,x_1x_2\,\vert\, \phi_3\otimes x_3\,)\,-\,(\phi_2\phi_1\otimes x_2x_1\,\,\vert\, \phi_3\otimes x_3\,)\\[0.2cm]
 &=\,(\,\phi_1\phi_2\,\vert\,\phi_3 )(x_1x_2\vert  x_3\,)
\,-\, 
(\,\phi_2\phi_1\vert\, \phi_3)(\,x_2x_1\vert x_3\,)\\[0.2cm]
&=
\,(\,\phi_1\vert\,\phi_2\phi_3 )(x_1\vert x_2x_3\,)
\,-\, (\,\phi_1\vert\,\phi_3\phi_2 )(x_1\vert\,x_3 x_2\,)\\[0.2cm]
&=
\,(\,\phi_1\otimes x_1\vert\,\phi_2\phi_3\otimes x_2\,x_3\,-\,\phi_3\phi_2\otimes\,x_3x_2\,),
\end{eqnarray*}
where the calculation relies on the fact that the Lie algebra \(\mathfrak{g}\) is a subalgebra of \(\mathfrak{gl}(n,\mathbf{C})\), so in particular \((x_1x_2\vert x_3)=(x_1\vert x_2x_3)\).   \\
 The bilinear form \(\,(\xi\,\vert\,\eta\,)\),\(\,\xi,\,\eta\in\mathcal{L}\mathfrak{g}\),  is non-degenerate.   
As an immediate consequences we have the following
\begin{proposition}~~~
\begin{enumerate}
\item
\((\mathcal{L}\mathfrak{g})_\lambda\) and \((\mathcal{L}\mathfrak{g})_\mu\) are orthogonal with respect to the bilinear form (\ref{invariantform}) unless \(\mu+ \lambda=0\).
\item
\begin{equation}
(\,\mathcal{K}\mathfrak{g}\,\vert\,\mathcal{J}\mathfrak{g}\,)\,=0\,.
\end{equation}
\end{enumerate}
\end{proposition}
\begin{proof}
Suppose \(\lambda+\mu\neq 0\) and 
let \( \xi\in (\mathcal{L}\mathfrak{g})_{\lambda}\), \(\eta\in (\mathcal{L}\mathfrak{g})_{\mu}\).   Choose \(\kappa\in\mathcal{K}\mathfrak{h}\) with \((\lambda+\mu)(\kappa)\neq 0\).   Then 
\[ (\,[\xi\,,\kappa\,]\,\vert\,\eta\,)=(\,\xi\,\vert [\kappa\,,\,\eta\,]\,)\]
implies \(-\lambda(\kappa)\,(\,\xi\vert \,\eta\,)=\mu(\kappa)\,(\,\xi\vert\,\eta\,)\,\) that is \((\lambda+\mu)(\kappa)\,(\xi\,\vert\,\eta\,)=0\,\).   Hence \((\xi\,\vert\,\eta\,)=0\,\).   The second assertion is trivial from the definition; \(q\,tr(\phi\cdot\psi)=0\) for \(\phi\in\mathcal{K}\) and \(\psi\in\mathcal{J}\).
\end{proof}~~~
Suppose \(\lambda+\mu\neq 0\)  
For any \(\alpha\in \Phi\) there is a unique element \(h_{\alpha}\in\mathfrak{h}\) such that \(\alpha(h)=(h_{\alpha}\vert h)\) for all \(h\in\mathfrak{h}\,\).     Similar assertion holds for \(\mathcal{K}\mathfrak{h}\).    In fact, let \(\lambda\in \Phi_{\mathcal{L}}\) be the weight of \(ad_{\mathcal{K}\mathfrak{h}}\,\) corresponding to \(\,\alpha=\pi_o\circ\lambda \circ i\).    Then for any \(\kappa=\phi\otimes h\in\mathcal{K}\mathfrak{h}\,\)  it holds   
\(\lambda(\kappa)=\phi\otimes\alpha(h)=(h_{\alpha}\vert h)\phi\,\).   Hence,  \(\kappa_{\lambda}=\phi^{+(0,0,1)}\otimes h_\alpha\in \mathcal{K}\mathfrak{h}\) is the unique element  that represents \(\lambda\in \Phi_{\mathcal{L}}\):  
\begin{equation}
\,\lambda(\kappa)=(\kappa_{\lambda}\,\vert\,\kappa)\quad\mbox{
 for }\, \forall\kappa\in\mathcal{K}\mathfrak{h}\,.\label{dualelement}
 \end{equation}
We know that \(h_{\alpha}=\,[x,\,y]\in [\mathfrak{g}_{\alpha}\,,\mathfrak{g}_{-\alpha}]\,\subset\mathfrak{h}\) for some \(x\in \mathfrak{g}_{\alpha}\) and \(y\in \mathfrak{g}_{-\alpha}\) , \cite{C} .    But \(\kappa_{\lambda}\) can not have an analogous formula.   Though it holds that 
 \[[\,(\mathcal{L}\mathfrak{g})_{\lambda}\,,\,(\mathcal{L}\mathfrak{g})_{-\lambda}\,]\,=(\mathcal{L}\mathfrak{g})_0=\mathcal{L}\mathfrak{h}=\mathcal{K}\mathfrak{h}\oplus\mathcal{J}\mathfrak{h}\, ,\] 
 from (\ref{weight0vectors}).

 \begin{proposition}~~~ 
 \begin{enumerate}
 \item
 Let \(\lambda\in \Phi_{\mathcal{L}}\) be a weight of \(ad_{\mathcal{K}\mathfrak{h}}\) and \(\alpha=\pi_o\circ\lambda\circ i\in \Phi\) the corresponding root of \(\,\mathfrak{g}\).     
 Then the vector \(\kappa_{\lambda}=\phi^{+(0,0,1)}\otimes h_\alpha\in \mathcal{K}\mathfrak{h}\) gives the \(\mathcal{K}\mathfrak{h}\)-component of \( [\,(\mathcal{L}\mathfrak{g})_{\lambda}\,,\,(\mathcal{L}\mathfrak{g})_{-\lambda}\,]\,\).    
 \item
 We have the relation:
 \begin{equation}
 [\,\xi\,,\,\eta\,]\,=\,\left(\,\xi\,\vert\,\eta\,\right)\,\kappa_{\lambda}\,,
 \end{equation}
 for \(\xi\in\,(\mathcal{L}\mathfrak{g})_{\lambda}\,\) and  \(\eta \in\,(\mathcal{L}\mathfrak{g})_{-\lambda}\,\).   
  \end{enumerate}
  \end{proposition}
  \begin{proof}~~~
 In fact, let \(\lambda\in\Phi_{\mathcal{L}}\) and let \(\alpha\in\Phi\) be the corresponding element: \(\alpha=\pi_0\circ\lambda\circ i\), and let   \(\mathbf{R}e_{\alpha}\) be the 1-dimensional \(\mathfrak{h}\)-submodule contained in  \(\,\mathfrak{g}_{\alpha}\).     
 We have \([h, e_{\alpha}]=\alpha(h)e_{\alpha}\) for all \(h\in \mathfrak{h}\).      Similarly for   \(\epsilon_{\lambda}=\phi^{+(0,0,1)}\otimes e_{\alpha}\in\mathcal{L}\otimes e_{\alpha}=(\mathcal{L}\mathfrak{g})_{\lambda}\) and \(\kappa=\phi\otimes h\in \mathcal{K}\mathfrak{h}\), we have 
 \( [\kappa\,,\,\epsilon_{\lambda} ]\,=\,\alpha(h)\phi\,\otimes e_{\alpha}=\lambda(\kappa)\,\epsilon_{\lambda}\) .
 Let \(y\in\mathfrak{g}_{-\alpha}\) be such that \((e_{\alpha}\vert y)\neq 0\).   Such a \(y\in \mathfrak{g}_{-\alpha}\) certainly exists.   
  Then \([e_{\alpha},\,y]\in [\mathfrak{g}_{\alpha},\mathfrak{g}_{-\alpha}]\subset \mathfrak{h}\).   For any \(\psi\in\mathcal{L}\), we have \([\epsilon_{\lambda}, \psi\otimes y]=
  \psi\otimes [e_{\alpha},y]\in 
 \mathcal{L}\mathfrak{h}\,=\mathcal{K}\mathfrak{h}\oplus \mathcal{J}\mathfrak{h}\).    We shall verify that the \(\mathcal{K}\mathfrak{h}\) part of \([\epsilon_{\lambda},\,\psi\otimes y]\) is given by \((\epsilon_{\lambda}\vert \psi\otimes y\,)\kappa_{\lambda}\).   In fact let \(\xi=[\epsilon_{\lambda}, \psi\otimes y]-(\epsilon_{\lambda}\vert \psi\otimes y\,)\kappa_{\lambda}\).    
Then
\[ (\kappa\vert \xi)=(\kappa\vert [\epsilon_{\lambda}, \psi\otimes y]\,)-\,(\epsilon_{\lambda}\vert \psi\otimes y)\,(\kappa\vert \kappa_{\lambda})\,=([\kappa,\epsilon_{\lambda}]\,\vert\,\psi\otimes y)-\lambda(\kappa)(\epsilon_{\lambda}\vert\psi\otimes y)=0 
  \]
  for any \(\kappa=\phi\otimes h\in \mathcal{K}\mathfrak{h}\).    Thus \(\xi\in \mathcal{J}\mathfrak{h}\).   Hence \((\epsilon_{\lambda}\vert \psi\otimes y\,)\kappa_{\lambda}\) is the projection of \([\epsilon_{\lambda},\,\psi\otimes y]\) to \(\mathcal{K}\mathfrak{h}\).   The first assertion is proved.   
    Now for the proof of the second assertion we consider \(\,[\,\xi\,,\,\eta\,]\,-\,\left(\,\xi\,\vert\,\eta\,\right)\,\kappa_{\lambda}\).   For all \(\kappa\in \mathcal{K}\mathfrak{h}\) we have 
\begin{eqnarray*}
& \left([\,\xi\,,\,\eta\,]\,-\,\left(\,\xi\,\vert\,\eta\,\right) \kappa_{\lambda}\,\vert\,\kappa\,\right)
=\left(\,[\,\xi,\,\eta\,]\,\vert\,\kappa\right) -\,\left(\xi\,\vert\,\eta\right)\left( \kappa_{\lambda}\,\vert\kappa\right)\,\\[0.2cm]
&=\, \left(\,\xi\,\vert\,[\,\eta,\,\kappa\,]\right)\,-\,\lambda(\kappa)\,\left(\xi\,\vert\,\eta\right)\,=0\,.
\end{eqnarray*}
  Since the form is non-degenerate on \(\mathcal{K}\mathfrak{h}\,\) we deduce 
  \([\,\xi\,,\,\eta\,]\,-\,\left(\xi\vert\eta\right)\kappa_{\lambda}=0\,.\)
  \end{proof}

\section{ Central extension of the \(\mathfrak{g}\)-current algebra}

\subsection{ Central extension of the \(\mathfrak{g}\)-current algebra \(\mathcal{L}\mathfrak{g}\) }

  Let \((V,\,[\,\cdot\,,\,\cdot\,]_V\,)\) be a quarternion Lie algebra.    A {\it central extension} of   \((V,\,[\,\cdot\,,\,\cdot\,]_V\,)\) is a quarternion Lie algebra \((W, \,[\,\cdot\,,\,\cdot\,]_W\,)\)  such that \(W=V\oplus Z\) ( direct sum ) and  \(Z\) is contained in the center of \(W\);
\[\,Z\,\subset \{w\in W\,:\,[w,x]_W=0\,, \forall x\in W\}\,,\]
and such that  \(\,[\,\cdot\,,\,\cdot\,]_W\)  restricts to  \(\,[\,\cdot\,,\,\cdot\,]_V\).   
  
  Let \(\mathfrak{g}\) be a simple Lie algebra which we suppose to be a subalgebra of a matrix algebra, and let \(\mathcal{L}\mathfrak{g}\) be the \(\mathfrak{g}\)-current algebra.      
 There exists a non-degenerate symmetric bilinear form \((\cdot\,\vert\cdot\,)\)  on  \( \mathfrak{g}\) ( Killing form ), which is given given by 
 \((x \vert y)=\,Trace\,(\,x\,y) \,\).     
 The invariance means; \(([x,y]\vert z)=(x\vert [y,z])\) for all \(x,y,z\in\mathfrak{g}\).   
 
 In Proposition \ref{cocycle}  we introduced a 2-cocycle \(A\) on the space of current \(\mathcal{L}\) that takes values in \(\mathbf{H}\).       
We extend them to the 2-cocycle on the \(\mathfrak{g}\)-current algebra   \( \mathcal{L}\mathfrak{g}\) by 
\begin{equation}\label{tricocycle}
A(\,\phi\otimes x\,,\,\psi\otimes y\,)=\,(x \vert y)\,A(\phi,\psi)\,
\end{equation}
for \(\phi,\,\psi\in \mathcal{L}\) and \(x,\,y\in\mathfrak{g}\).  Then we have a \(\mathbf{H}\)-valued bilinear form on \(\mathcal{L}\mathfrak{g}\,\)  that satisfy cocycle conditions:
\begin{eqnarray*}
& A(\,u\,,\,v\,)=\, -A(\,v\,,\,u\,)\\[0.2cm]
& A(\,[u,v]\,,\,w\,)\,+\, A(\,[v,w]\,,\,u\,)\,+ A(\,[w,u]\,,\,v\,)\,=0\qquad\mbox{ for }  \,u,v,w\in\mathcal{L}\mathfrak{g}.
\end{eqnarray*}
In fact it is enough to check these conditions for \(u=\phi\otimes x,\,v=\psi\otimes y,\,w=\pi\otimes z\,\), with \(\phi,\psi,\pi\in \mathcal{L}\) and \(x,y,z\in\mathfrak{g}\).   The first follows from (\ref{antisym}) and the symmetry of \((\cdot\,\vert\,\cdot)\).   The second property follows from (\ref{precocycle}) and  the symmetry and invariance of \((\cdot\,\vert\,\cdot)\).  Indeed we have 
 \begin{eqnarray*}
 A([u,v]\,,w)&=&A((\phi\psi)\otimes xy\,, \pi\otimes z)-A((\psi\phi)\otimes yx,  \pi\otimes z)=
 (xy\vert z)\,A(\phi\psi\,,\pi)-(yx\vert z)A(\psi\phi\,,\pi).\\[0.2cm]
 A([v,w]\,,u)&=&A((\psi\pi)\otimes yz, \phi\otimes x)-\,A((\pi\psi)\otimes zy,  \phi\otimes x)=
 (yz\vert x)A(\psi\pi,\phi)-\,(zy\vert x)A(\pi\psi,\phi).\\[0.2cm]
 A([w,u],v)&=&A((\pi\phi)\otimes zx, \psi\otimes y)-A((\phi\pi)\otimes xz,  \psi\otimes y)=
 (zx\vert y)A(\pi\phi,\psi)-(xz\vert y)A(\phi\pi,\psi).
 \end{eqnarray*}
\((\cdot\vert\cdot)\) being symmetric invariant bilinear form we have  \((xy\vert z)=(yz\vert x)=(zx\vert y)\) etc., then 
\begin{eqnarray*}
 &&A([u,v]\,,w)+A([v,w]\,,u)+A([w,u]\,,v)=\\[0.2cm]
 &&(xy\vert z)\left\{
 A(\phi\psi,\pi)+A(\psi\pi,\phi)+A(\pi\phi,\psi)\right\}
- (yx\vert z)\left\{
 A(\psi\phi,\pi)+A(\pi\psi,\phi)+A(\phi\pi,\psi)\right\}.
\end{eqnarray*}
By (\ref{precocycle}) the last formula vanishes.

     Associated to the 2-cocycle \(A\), we have the central extension of \( \mathcal{L}\mathfrak{g}\,\).    
     
 \begin{theorem}~~ Let \(\rm c\) be a indefinite number.    Put  
 \begin{equation}
 \mathcal{L}\mathfrak{g}(\rm c)\,=\, \mathcal{L}\mathfrak{g}\oplus \mathbf{H}c\,.
\end{equation}
We endow \((\mathcal{L}\otimes \mathfrak{g})\oplus \mathbf{H}c\,\) with the following bracket: 
\begin{eqnarray}\label{bracSH}
 [\,\phi \otimes x\, , \,\psi  \otimes y\,]^{\rm c}
  &=&   [ \phi\otimes x\,,\,\psi\otimes y\,]
+A(\phi \otimes x\,, \psi \otimes y){\rm c}\, \,,\nonumber
\\[0,3cm] 
 [\,{\rm c}\,, \,\phi\otimes x\,]^{\rm c}&=&0\, ,
  \end{eqnarray}
for  \(\phi\otimes x\,,\, \psi\otimes y \in \mathcal{L}\otimes\mathfrak{g}\).   The bracket is extended to \(\mathcal{L}\mathfrak{g}({\rm c})\) and 
 \(\mathcal{L}\mathfrak{g}({\rm c})\) becomes a quartenion Lie algebra with 
the conjugation automorphism \(\sigma\) extended by \(\,\sigma {\rm c}= {\rm c}\). 
\end{theorem}  
 
 We shall further complete the central extension of the current algebra \(\mathcal{L}\mathfrak{g}(\rm c)\) by adjoining the normal derivation coming from the normal vector field \(\).    First we extend \(\mathbf{n}\)  
 to an outer derivation of the 
 Lie algebra \(\mathcal{L}\mathfrak{g}\,\) by 
\begin{equation}
   \,\mathbf{n}(\phi \otimes x\,)\,=\,(\,\mathbf{n}\phi \,)\otimes x,\qquad \,\phi\in \mathcal{L}\,,\,x\in \mathfrak{g}\,.
\end{equation}  
Then we extend \(\mathbf{n}\)  further to \(\mathcal{L}\mathfrak{g}({\rm c})\) by killing the \(\rm c\,\); \(\mathbf{n}{\rm c}=0\,\).   In fact we have the following
\begin{lemma}\label{outerderivation}
Let \(\frac{\partial}{\partial n}=\frac{1}{2|z|}\mathbf{n}\) be the normal derivative.   We have 
\begin{equation}
\,[\,\frac{\partial}{\partial n}(\phi_1\otimes x_1)\,,\,\phi_2\otimes x_2\,]^{\rm c}\,+\,
[\,\phi_1\otimes x_1\,,\,\frac{\partial}{\partial n}(\phi_2\otimes x_2)\,]^{\rm c}
=\frac{\partial}{\partial n}\left(\,[\,\phi_1\otimes x_1\,,\,\phi_2\otimes x_2\,]^{\rm c}\,\,\right)\,.
\end{equation}  
\end{lemma}
\begin{proof}~~~
From  
Propositions \ref{derivationofL} and \ref{cocycleandnormalder}
we have
\begin{eqnarray*}
&&\,[\,\frac{\partial}{\partial n}(\phi_1\otimes x_1)\,,\,\phi_2\otimes x_2\,]^{\rm c}\,+\,
[\,\phi_1\otimes x_1\,,\,\frac{\partial}{\partial n}(\phi_2\otimes x_2)\,]^{\rm c}\nonumber\\[0.2cm]
&& \,=\,(\frac{\partial}{\partial n}\phi_1 \cdot\phi_2)\otimes\,x_1x_2\,-\,(\phi_2\cdot\frac{\partial}{\partial n}\phi_1 )
 \otimes\,x_2x_1\, 
  \,+\,(\phi_1 \cdot \frac{\partial}{\partial n}\phi_2)\otimes\, x_1x_2\,\, -\,( \frac{\partial}{\partial n}\phi_2\cdot\phi_1)\otimes\, x_2x_1\,\nonumber \\[0.2cm]
&&  +\,(x_1\vert x_2)\,\left ( A(\frac{\partial}{\partial n}\phi_1,\,\phi_2)\,+\,A(\phi_1,\,\frac{\partial}{\partial n}\phi_2)\right ){\rm c}\,\nonumber \\[0.2cm]
&&\,=\,\frac{\partial}{\partial n}(\phi_1\cdot\phi_2)\otimes x_1x_2\,-\,\frac{\partial}{\partial n}(\phi_2\cdot\phi_1)\otimes x_2x_1\,
=\frac{\partial}{\partial n}\left(\,[\,\phi_1\otimes x_1\,,\,\phi_2\otimes x_2\,]^{\rm c}\,\,\right)\,.
\end{eqnarray*}  
\end{proof}
We have shown that  \(\frac{\partial}{\partial n}\) acts on the Lie algebra \( \mathcal{L}\mathfrak{g}({\rm c})\).    We remark that Lemma \ref{outerderivation} is valid for the normal vector field \(|z|^{-k}\mathbf{n}\), \(k\geq 1\), but not  for \(|z|^k\mathbf{n}\), \(k\geq 0\).    \(|z|^{-k}\mathbf{ n}\),  \(k\geq 1\),  acts on the Lie algebra \( \mathcal{L}\mathfrak{g}({\rm c})\):
\[
\,[\,|z|^{-k}\mathbf{n}(\phi_1\otimes x_1)\,,\,\phi_2\otimes x_2\,]^{\rm c}\,+\,
[\,\phi_1\otimes x_1\,,\,|z|^{-k}\mathbf{n}(\phi_2\otimes x_2)\,]^{\rm c}
=|z|^{-k}\mathbf{n}\left(\,[\,\phi_1\otimes x_1\,,\,\phi_2\otimes x_2\,]^{\rm c}\,\,\right)\,.
\]

 \begin{theorem}~~~
 Let \(\,{\rm d}\) be an indefinite element. We consider the \(\mathbf{R}\)-vector space:
\begin{equation}
\widehat{\mathfrak{g}\,}\,=\, \mathcal{L}\mathfrak{g}\oplus( \mathbf{H}\, {\rm c} )\oplus (\mathbf{C}\,{\rm d})\,.\label{quatgl}
\end{equation}
We endow  \(\,\widehat{\mathfrak{g}}\,\) with the following extended bracket:  
 \begin{eqnarray}
&& [\,\phi \otimes x\, , \,\psi  \otimes y\,]_{\widehat{\mathfrak{g}} } \,=\,
  [\,\phi \otimes x\, , \,\psi  \otimes y\,]^{\rm c}    \nonumber\\[0.2cm]
  &&\quad =  \, [\,\phi \otimes x\, , \,\psi  \otimes y\,]\,+\, (x|y)\, A(\phi , \psi )\,{\rm c}\, , 
\label{braqgl}
\\[0,2cm] 
    &&\, [\,{\rm c}\,, \,\phi\otimes x\,] _{\widehat{\mathfrak{g}} }\,=0\,,\\[0.2cm]
    &&\,  [\,{\rm d}\,,\, \phi \otimes x\,] _{\widehat{\mathfrak{g}} }=\,\frac{\partial}{\partial n}\,\phi \otimes x\,, \label{derbra}\\[0.2cm]
&&   [\,{\rm d}\,,\,{\rm c}\,]_{\widehat{\mathfrak{g}} }\,=0\,,\nonumber
  \end{eqnarray}
  for \(x,y\in \mathfrak{g}\) and \(\phi,\,\psi\,\in \,\mathcal{L}\, \).      The involution \(\sigma\) is extended to \(\widehat{\mathfrak{g}}\,\) by 
\begin{equation*}
\sigma(\,\phi\otimes x)=\sigma\phi\otimes x\,,\quad \sigma {\rm c}\,=0\,, \quad \sigma {\rm n}\,={\rm n}\,.
\end{equation*}
Then we get a quarternion Lie algebra
 \( \left(\, \widehat{\mathfrak{g}}  \, , \, [\,\cdot,\cdot\,]_{\widehat{\mathfrak{g}} } \,\right) \).
\end{theorem}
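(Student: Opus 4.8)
The plan is to recognize $\widehat{\mathfrak{g}}$ as the Lie algebra obtained from the central extension $\mathcal{L}\mathfrak{g}(a)=\mathcal{L}\mathfrak{g}\oplus(\oplus_{k}\mathbf{C}a_k)$ by adjoining the one–dimensional space $\mathbf{C}n$ whose adjoint action is the radial derivation $\mathbf{n}$, and then to invoke the standard principle that adjoining a derivation to a Lie algebra again produces a Lie algebra. Since the preceding theorem already provides $\mathcal{L}\mathfrak{g}(a)$ as a quaternion Lie algebra, and since $\mathbf{n}\phi\in\mathcal{L}$ (Proposition \ref{typepreserve}) with $\mathbf{n}a_k=0$ guarantees that $[n,\cdot]_{\widehat{\mathfrak{g}}}$ maps $\widehat{\mathfrak{g}}$ into $\mathcal{L}\mathfrak{g}(a)$, the bracket closes on $\widehat{\mathfrak{g}}$. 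It then remains to check: (i) $\mathbf{R}$-bilinearity and antisymmetry; (ii) the Jacobi identity; and (iii) that the involutions $\sigma$ and $\tau$ preserve $\widehat{\mathfrak{g}}$ and are bracket homomorphisms.

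First I would record that $\mathbf{n}$, extended to $\mathcal{L}\mathfrak{g}(a)$ by $\mathbf{n}a_k=0$, is a derivation of the full cocycle bracket $[\cdot,\cdot]^a$. On the $\mathcal{L}\mathfrak{g}$–part this is exactly the computation \eqref{outerder}, which marries the Leibniz rule \eqref{leibnitzbr} for $\mathbf{n}$ on products of spinors with the identity $c_k(\mathbf{n}\phi_1,\phi_2)+c_k(\phi_1,\mathbf{n}\phi_2)=0$ of Proposition \ref{deriv}, so that the central correction is annihilated; on the central part the derivation law reads $0=0$ because each $a_k$ is central and is killed by $\mathbf{n}$. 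Thus $\mathrm{ad}_n=\mathbf{n}$ is a derivation of $\mathcal{L}\mathfrak{g}(a)$. Antisymmetry of $[\cdot,\cdot]_{\widehat{\mathfrak{g}}}$ is then immediate: the bracket on $\mathcal{L}\mathfrak{g}(a)$ and each $c_k$ are antisymmetric, $[n,\xi]_{\widehat{\mathfrak{g}}}=\mathbf{n}\xi=-[\xi,n]_{\widehat{\mathfrak{g}}}$, and $[n,n]_{\widehat{\mathfrak{g}}}=0$.

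For the Jacobi identity I would argue by trilinearity on triples drawn from $\mathcal{L}\mathfrak{g}(a)$ together with $n$, organized by how many entries equal $n$. When no entry is $n$, Jacobi holds because $\mathcal{L}\mathfrak{g}(a)$ is already a Lie algebra. When exactly one entry is $n$, the required identity $[n,[\xi,\eta]^a]_{\widehat{\mathfrak{g}}}=[[n,\xi]_{\widehat{\mathfrak{g}}},\eta]^a+[\xi,[n,\eta]_{\widehat{\mathfrak{g}}}]^a$ is precisely the derivation property of $\mathbf{n}$ established from \eqref{outerder}. When two or three entries equal $n$, both sides vanish, using $[n,n]=0$ and $[n,a_k]=0$. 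This exhausts all cases.

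Finally, for the quaternion structure I would extend $\sigma$ and $\tau$ to the central elements and to $n$ as prescribed (fixing the central/derivation generators) and verify $\sigma[\cdot,\cdot]_{\widehat{\mathfrak{g}}}=[\sigma\cdot,\sigma\cdot]_{\widehat{\mathfrak{g}}}$, and similarly for $\tau$. On $\mathcal{L}\mathfrak{g}(a)$ this compatibility is already known; the genuinely new relations are those involving $n$, and here the crucial observation is that the radial derivation $\mathbf{n}=\tfrac12(\nu+\bar\nu)$ is a \emph{real} operator acting componentwise, hence commutes with both involutions, $\sigma(\mathbf{n}\phi)=\mathbf{n}(\sigma\phi)$ and $\tau(\mathbf{n}\phi)=\mathbf{n}(\tau\phi)$. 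Together with the $\sigma$- and $\tau$-invariance of the cocycles $c_k$ (Proposition \ref{2cocycle}(4)), this forces the compatibility on every bracket containing $n$. The main obstacle is the Jacobi verification, but it is essentially dictated by \eqref{outerder}; the real care is needed only in fixing the involution conventions on the $a_k$ and $n$ so that $\sigma$ and $\tau$ remain involutions commuting with the full bracket.
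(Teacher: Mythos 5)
Your proposal is correct and follows essentially the same route as the paper: the paper likewise treats $\widehat{\mathfrak{g}}$ as $\mathcal{L}\mathfrak{g}(a)$ with the derivation ${\rm n}$ adjoined, reduces the verification to the Jacobi identity with exactly one entry equal to ${\rm n}$, and settles that case by the derivation identity (\ref{outerder}), which in turn rests on the Leibniz rule for $\mathbf{n}$ and Proposition \ref{deriv}. Your write-up is somewhat more explicit about the remaining cases (zero, two, or three entries equal to ${\rm n}$) and about the compatibility of $\sigma$ and $\tau$ with the extended bracket, but these are exactly the points the paper leaves implicit, so the two arguments coincide in substance.
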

\begin{proof}~~~ We write simply \([\,\,,\,\,]\) instead of  \([\,\,,\,\,]_{\widehat{\mathfrak{g}} }\,\).   It is enough to prove the following Jacobi identity:
 \begin{equation*}
 [\,[\,{\rm d}\,, \,\phi_1   \otimes x_1 \,]\,,\, \phi_2 \otimes x_2\,]
+[\,[\phi_1  \otimes x_1 , \phi_2 \otimes x_2\,]\,,\,{\rm d}\,]
\,+\,[\,[\phi_2 \otimes x_2 , \,{\rm d}\,] ,\, \phi _1   \otimes x_1\,]=0.
\end{equation*}    
From the defining equation (\ref{derbra}) and Lemma  \ref{outerderivation} 
 the sum of the 1st and the 3rd terms is equal to  
\[[\,[{\rm d},\,\phi_1\otimes x_1]\,,\phi_2\otimes x_2\,]\,+\,
[\phi_1\otimes x_1\,, [{\rm d}\,,\,\phi_2\otimes x_2]\,]\,=
\,\frac{\partial}{\partial n}\,\left(\,[\,\phi_1\otimes x_1\,,\,\phi_2\otimes x_2\,]\,\,\right)\,,
\]
which is equal to  \(\,-\,[\,[\phi_1  \otimes x_1 , \phi_2  \otimes x_2\,]\,,\,{\rm d}\,]\).
 \end{proof}
  
 \par\medskip

 \begin{proposition}
  The centralizer of \(\,{\rm d}\in \,\widehat{\mathfrak{g}}\,\) is given by
\begin{equation*}
(\,\mathcal{L}[0]\, \mathfrak{g}\,)\,\oplus \,\mathbf{H}{\rm c}\,\oplus\mathbf{C}{\rm d}\,.
 \end{equation*}
 \end{proposition}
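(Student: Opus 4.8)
The plan is to realize the centralizer as the kernel of $ad(n)$ and to exploit the fact that $ad(n)$ acts diagonalizably on $\widehat{\mathfrak{g}}$ through the homogeneous grading of $\mathcal{L}$. First I would write a general element of $\widehat{\mathfrak{g}}$ as $\xi=\eta+\sum_{k=0}^{2}c_{k}a_{k}+d\,n$ with $\eta\in\mathcal{L}\mathfrak{g}$ and $c_{k},d\in\mathbf{C}$. By the defining relations (\ref{derbra}) one has $[\,n,a_{k}\,]=0$ and $[\,n,n\,]=0$, so $[\,n,\xi\,]=[\,n,\eta\,]$. Hence the three central elements $a_{0},a_{1},a_{2}$ and $n$ itself automatically lie in the centralizer, and the problem reduces to determining which $\eta\in\mathcal{L}\mathfrak{g}$ satisfy $[\,n,\eta\,]=0$.

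Next I would identify $ad(n)$ restricted to $\mathcal{L}\mathfrak{g}$ with the radial derivation $\mathbf{n}$. On the generators this is the definition $[\,n,\phi\otimes x\,]=\mathbf{n}\phi\otimes x$ of (\ref{derbra}); since $ad(n)$ is a derivation of $\widehat{\mathfrak{g}}$ by the Jacobi identity, and the extension of $\mathbf{n}$ to $\mathcal{L}\mathfrak{g}$ is a derivation by Proposition \ref{typepreserve} and the computation (\ref{outerder}), and the two agree on the generating set $\mathcal{L}\otimes_{\mathbf{C}}\mathfrak{g}$, they agree on all of $\mathcal{L}\mathfrak{g}$ by induction on the bracket length of the monomials $\phi_{12\cdots t}\ast x_{12\cdots t}$.

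I would then exploit the homogeneous decomposition (\ref{homog}): since $\mathcal{L}=\bigoplus_{m}\mathcal{L}[m]$ and $\mathbf{n}$ acts on $\mathcal{L}[m]$ as the scalar $m/2$ by (\ref{normalder}), the bracket (\ref{glbracet}) being additive in the homogeneous degree endows $\mathcal{L}\mathfrak{g}$ with a $\mathbf{Z}$-grading $\mathcal{L}\mathfrak{g}=\bigoplus_{m}(\mathcal{L}\mathfrak{g})[m]$ on which $\mathbf{n}$ is the scalar $m/2$. Thus $ad(n)$ is diagonalizable and $[\,n,\eta\,]=0$ holds precisely when $\eta$ lies in the degree-zero summand $(\mathcal{L}\mathfrak{g})[0]$. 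It then remains to identify this summand: feeding the root-space decomposition of Theorem \ref{triangulardecomp}, namely $\mathcal{L}\mathfrak{g}=\mathcal{L}\mathfrak{h}\oplus\bigoplus_{\alpha\in\Phi}\mathcal{L}\otimes_{\mathbf{C}}\mathfrak{g}_{\alpha}$, into the grading and taking degree-zero parts gives $(\mathcal{L}\mathfrak{g})[0]=(\mathcal{L}\mathfrak{h})[0]\oplus\bigoplus_{\alpha}\mathcal{L}[0]\otimes_{\mathbf{C}}\mathfrak{g}_{\alpha}$, and identifying $(\mathcal{L}\mathfrak{h})[0]$ with $\mathcal{L}[0]\otimes_{\mathbf{C}}\mathfrak{h}$ yields $(\mathcal{L}\mathfrak{g})[0]=\mathcal{L}[0]\otimes_{\mathbf{C}}\mathfrak{g}$. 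Combined with the first paragraph this produces the asserted centralizer $\bigl(\mathcal{L}[0]\otimes_{\mathbf{C}}\mathfrak{g}\bigr)\oplus\bigl(\oplus_{k}\mathbf{C}a_{k}\bigr)\oplus\mathbf{C}n$.

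The main obstacle will be exactly that last identification, namely controlling the degree-zero part of the Cartan-type summand $\mathcal{L}\mathfrak{h}=\mathcal{K}\mathfrak{h}\oplus\mathcal{K}^{\bot}\mathfrak{h}$. One must show that the degree-zero component of $\mathcal{K}^{\bot}\mathfrak{h}=[\,\mathcal{L}\mathfrak{h},\mathcal{L}\mathfrak{h}\,]$ does not enlarge the answer beyond $\mathcal{L}[0]\otimes_{\mathbf{C}}\mathfrak{h}$, using the relations $[\mathcal{K},\mathcal{L}]=0$ and $[\mathcal{L},\mathcal{L}]=\mathcal{K}^{\bot}$ of (\ref{Kcommute}) together with the homogeneous grading of $\mathcal{L}$. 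The diagonalizability established above is routine, so this structural analysis of $(\mathcal{L}\mathfrak{h})[0]$, and in particular of which homogeneous degrees can cancel inside a bracket $[\psi_{1}\otimes h_{1},\psi_{2}\otimes h_{2}]=[\psi_{1},\psi_{2}]\otimes h_{1}h_{2}$, is where the real care is needed.
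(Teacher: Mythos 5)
Your reduction of the problem to computing $\ker\bigl(ad(n)\bigr)$ on $\mathcal{L}\mathfrak{g}$, and your identification of $ad(n)|_{\mathcal{L}\mathfrak{g}}$ with the radial derivation $\mathbf{n}$ acting as the scalar $m/2$ on the homogeneous pieces, are correct and essentially forced; note, however, that the paper states this proposition without any proof at all, so there is no argument of the author's to compare against. The substance of the proposition is therefore entirely in the step you yourself flag as ``where the real care is needed'' and then leave open: the identification of the degree-zero graded piece $(\mathcal{L}\mathfrak{g})[0]$ with $\mathcal{L}[0]\otimes_{\mathbf{C}}\mathfrak{g}$. This is a genuine gap, and in fact the identification as you state it appears to fail. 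By Lemma \ref{heigen}(4) and Corollary \ref{nonproduct}, $\mathcal{L}\mathfrak{g}$ exceeds $\mathcal{L}\otimes_{\mathbf{C}}\mathfrak{g}$ exactly by $\mathcal{K}^{\bot}\mathfrak{h}$, which contains elements $[\psi_{1}\otimes h_{1},\psi_{2}\otimes h_{2}]=[\psi_{1},\psi_{2}]\otimes h_{1}h_{2}$ with $h_{1}h_{2}\in\mathfrak{gl}(n,\mathbf{C})$ generally \emph{not} in $\mathfrak{g}$. Taking $\psi_{1}\in\mathcal{L}[m]$ and $\psi_{2}\in\mathcal{L}[-m]$ with $m\neq 0$ and $[\psi_{1},\psi_{2}]\neq 0$ produces a degree-zero element of $\mathcal{L}\mathfrak{g}$ that centralizes $n$ but does not lie in $\mathcal{L}[0]\otimes_{\mathbf{C}}\mathfrak{g}$, nor even in $\mathcal{L}\otimes_{\mathbf{C}}\mathfrak{g}$.

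The paper itself hints at this subtlety: immediately after the proposition it defines $\mathcal{L}[0]\mathfrak{g}$ as the \emph{subalgebra generated by} $\mathcal{L}[0]\otimes_{\mathbf{C}}\mathfrak{g}$, which is presumably the intended right-hand side. But even with that reading your argument is incomplete: to show that every degree-zero element of $\mathcal{L}\mathfrak{g}$ lies in the subalgebra generated by $\mathcal{L}[0]\otimes_{\mathbf{C}}\mathfrak{g}$, you must show (at minimum) that a commutator $[\psi_{1},\psi_{2}]$ with $\psi_{1}\in\mathcal{L}[m]$, $\psi_{2}\in\mathcal{L}[-m]$ can be rewritten as a sum of commutators $[\chi_{j},\chi_{j}']$ with $\chi_{j},\chi_{j}'\in\mathcal{L}[0]$, i.e.\ that $[\mathcal{L}[m],\mathcal{L}[-m]]\subset[\mathcal{L}[0],\mathcal{L}[0]]$ inside $\mathcal{K}^{\bot}$. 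Neither you nor the paper supplies this, and it is a nontrivial statement about the multiplicative structure of $\mathcal{L}$, not a formal consequence of the grading. Until that is settled, the correct unconditional conclusion of your argument is only that the centralizer equals $(\mathcal{L}\mathfrak{g})[0]\oplus(\oplus_{k}\mathbf{C}a_{k})\oplus\mathbf{C}n$, with $(\mathcal{L}\mathfrak{g})[0]$ the degree-zero part of the grading induced by $\mathbf{n}$.
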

 Here \(\,\mathcal{L}[0]\) is the subspace in \(\mathcal{L}\) generated by \(\phi_1\cdots\phi_n\) with \(\phi_i\) being \( \phi_i=\phi^{\pm(m_i,l_i,k_i)} \) such that 
 \[\sum_{i;\,\phi_i=\phi^{+(m_i,l_i,k_i)} }\,m_i-\sum_{i;\,\phi_i=\phi^{-(m_i,l_i,k_i)}}\,(m_i+3)=0\,,\]
and  \(\,\mathcal{L}[0] \mathfrak{g}\,\) is the subalgebra of \(\widehat{\mathfrak{g}}\,\) generated by \(\,\mathcal{L}[0]\,\otimes_{\mathbf{C}} \mathfrak{g}\,\).   The proposition follows from the definition (\ref{derbra}).

\begin{definition}\label{affinecurrent}
We call the quarternion Lie algebra \(\,\widehat{\mathfrak{g}}\,\)  {\it the affine current algebra over \(\mathfrak{g}\) }:
\begin{equation}
\widehat{\mathfrak{g}\,}\,=\, \mathcal{L}\mathfrak{g}\oplus\,\mathbf{H}\,{\rm c}\oplus \,\mathbf{C}\,{\rm d}\,.
\end{equation}
\end{definition}

\subsection{ Root space decomposition of the current algebra \(\,\widehat{\mathfrak{g}}\,\) }
 
Let \(\widehat{\mathfrak{g}\,}\,=\, \mathcal{L}\mathfrak{g}\oplus\,\mathbf{H}{\rm c} \oplus \mathbf{C}\,{\rm d}\,\)
 be the affine current algebra over \(\mathfrak{g}\), Definition \ref{affinecurrent}.   
We introduce the subalgebra 
\begin{equation}
\widehat{\mathfrak{h}}\,
  =\,\mathfrak{h} \oplus \mathbf{H}{\rm c}\,\oplus \,\mathbf{C} {\rm d}\,,
\end{equation}
where we applied the identification \(\mathfrak{h}\ni h\stackrel{\simeq}{\longrightarrow} \phi^{+(0,0,1)}\otimes h\in \mathcal{L}\mathfrak{g}\).     \(\widehat{\mathfrak{h}}\,\) is a commutative subalgebra of \(\widehat{\mathfrak{g}}\).     The adjoint action of  \(\widehat{\mathfrak{h}}\,\) over \(\widehat{\mathfrak{g}}\,\) is written as follows.   
From the discussion in previous sections, in particular by virtue of Theorem \ref{triangulardecomp}, 
Corollary \ref{nonproduct}, (\ref{weightvectors}) and  (\ref{quatgl}),  we see that any element \(\xi\,\in \widehat{\mathfrak{g}}\) is written in the form:
\begin{eqnarray}\label{anyLg}
\xi&=&\, x\,+\,p\,{\rm c}\,+\,q {\rm d}\,,\quad x\in\mathcal{L}\mathfrak{g}\,,\quad p \in \mathbf{H}\,,\, q\in \mathbf{C}\,\\[0.2cm]
x &=&\, y+\,\sum_{\alpha\in\Phi}\,\varphi_{\alpha}\otimes x_{\alpha}\,, \quad \varphi_{\alpha} \in
\mathcal{L}\,,\quad 
 x_{\alpha}\in \mathfrak{g}_{\alpha}\,,\nonumber
 \\[0.2cm]
  y&=& \kappa+z \in \mathcal{L}\mathfrak{h}\,,\quad \kappa\in\mathcal{K}\mathfrak{h}\,,\quad z\in \mathcal{J}\mathfrak{h}\,\nonumber
  \end{eqnarray}
   On the other hand any element of \(\,\widehat{\mathfrak{h}}\,\)  is written in the form 
   \[\,\hat h= \phi^{+(0,0,1)}\otimes h+\, s{\rm c} +\,t \,{\rm d}\,,\quad h\in \mathfrak{h}\,,\,s\in\mathbf{H}\,, t\in\mathbf{C}. \]
From Lemma \ref{heigen} follows \([\phi\otimes h\,, y\,]=0\) for any \(\phi\in \mathcal{K}\), \(h\in \mathfrak{h}\) and \(y\in \mathcal{L}\mathfrak{h}\),  in particular \([\phi^{+(0,0,1)}\otimes h\,, y\,]=0\).     
So we see that the adjoint action of \(\,\hat h=h+\, s{\rm c} +t {\rm d} \in\widehat{\mathfrak{h}}\,\) on \(\,\xi=y+\,\sum_{\alpha}\,\varphi_{\alpha}\otimes x_{\alpha}+\,p{\rm c} +\,q {\rm d}\in\widehat{\mathfrak{g}}\,\) becomes 
\begin{equation}\label{hathad}
ad(\hat h)(\xi)\,=\,\sum_{\alpha}\,\alpha(h)\varphi_{\alpha}\otimes x_{\alpha}
\,+\,  t\, \sum_{\alpha}\,(\,\frac{\partial}{\partial n}\,\varphi_{\alpha})\otimes x_{\alpha}\,+\,t \,[{\rm d}\,,\,y]\,.
\end{equation}

Let \(\widehat{\mathfrak{h}}^{\ast}\) be the dual space of \(\widehat{\mathfrak{h}}\):
\[\widehat{\mathfrak{h}}^{\ast}\,=\,Hom_{\mathbf{C}}(\widehat{\mathfrak{h}}\,,\mathbf{C})\,.\]
   An element  $\alpha$ of the dual space  \(\mathfrak{h}^*\) of \(\mathfrak{h}\) is regarded as a  element
of $\,\widehat{\mathfrak{h}}^{\,\ast}$ by putting
\[
\left\langle \,\alpha,\,{\rm c} \, \right\rangle= 
\left\langle \,\alpha , {\rm d} \,\right\rangle = 0\,.
\]
So  $\Phi \subset \mathfrak{h}^*$ is seen to be a subset of $\,\widehat{\mathfrak{h}}^{\,*}$.    
We define  $\delta\,,\,\Lambda\, \in \widehat{\mathfrak{h}}^{\,*}$,   by
\begin{align}
\left\langle\delta , \alpha _i ^{\vee} \,\right\rangle &= \left\langle\,\Lambda , \alpha _i ^{\vee} \,\right\rangle = 0,  \quad
 1 \leqq i \leqq  l\,, \nonumber\\[0.2cm] 
 \left\langle\,\delta , {\rm c} \,\right\rangle &= 0\,,  \qquad \left\langle\,\delta , {\rm d}\,\right\rangle = 1\,,\\[0.2cm]
 \left\langle\,\Lambda , \,{\rm c} \,\right\rangle &= 1\,,  \qquad \left\langle\,\Lambda , {\rm d}\,\right\rangle = 0\,.\nonumber
\end{align}  
Then \(\alpha_1,\,\cdots\,,\alpha_l,\,\delta,\,\Lambda,\,\) give the basis of \(\widehat{\mathfrak{h}}^{\ast}\).

We shall investigate the decomposition of \(\,\widehat{\mathfrak{g}}\,\) into a direct sum of the simultaneous eigenspaces of \(ad\,(\hat h)\,\), \(\hat h\in \widehat{\mathfrak{h}}\,\).
 For a 1-dimensional representation \(\lambda\in \widehat{\mathfrak{h}}^{\ast}\) we put 
\begin{equation}
\widehat{\mathfrak{g}}_{\lambda}\,=\,\left\{\xi\in \widehat{\mathfrak{g}}\,;\quad \, [\,\hat h,\,\xi\,]_{\widehat{\mathfrak{g}}}\,=\,\langle \lambda, \hat h\rangle\,\xi\quad\mbox{ for }\, \forall\hat h\in \,\widehat{\mathfrak{h}}\,\right\}.
\end{equation}
\(\lambda\)  is called a {\it root } of  the representation \(\left(\,\widehat{\mathfrak{g}}\,,\, ad(\widehat{\mathfrak{h}}\,)\right)\) if \(\lambda\neq 0\) and \(\, \widehat{\mathfrak{g}}_{\lambda}\neq 0\).     \(\,\widehat{\mathfrak{g}}_{\lambda}\) is called the {\it root space} of  \(\lambda\,\).     Let \( \widehat{\Phi}\) be the set of roots: 
 \begin{equation*}
 \widehat{\Phi}=\left\{\lambda=
 \alpha+\,m\,\Lambda\,+\,k_0\delta\in \widehat{\mathfrak{h}}^{\ast}\,;\, \alpha=\sum_{i=1}^l\,k_i\alpha_i\in\Phi, \, k_i,\,m\in \mathbf{Z},\,0\leq i\leq l \,\right\}.
 \end{equation*}
The set  \(\widehat{\Pi}=\{\,\alpha _1,\cdots,\alpha_l , \,\Lambda ,\,\delta \,\} \)  forms a fundamental basis of  $\,\widehat{\Phi}\,$.    
 Thus we have the root space decomposition of \(\widehat{\mathfrak{g}}\) with respect to  \(\widehat{\mathfrak{h}}\) :
 \begin{equation}
 \widehat{\mathfrak{g}}\,=\,\widehat{\mathfrak{g}}_0\,\oplus\,\left(\oplus_{\lambda\in 
 \widehat{\Phi}}\,\widehat{\mathfrak{g}}_{\lambda}\,\right)\,
\,.
 \end{equation}
There are following types of roots:  
\[
(i)\,\lambda=\alpha+k\delta, \, 0 \neq \alpha\in\Phi\,,\quad (ii)\,
 \lambda=k\delta,\quad k\neq 0,\quad (iii)
  \,\lambda=0\delta\,  \quad \mbox{and }\, (iv)\,\lambda=0\,.
 \]
  
\begin{proposition}~~~
We have the following relations: 
\begin{enumerate}
\item
\begin{equation}
\left[\, \widehat{\mathfrak{g}}_{\frac{m}{2}\delta+\alpha}\,,\, \widehat{\mathfrak{g}}_{\frac{n}{2}\delta+\beta}\,\right ]_{ \widehat{\mathfrak{g}}}\,\subset \,
 \widehat{\mathfrak{g}}_{\frac{m+n}{2}\delta+\alpha+\beta}\,\,,
 \end{equation}
 for \(\alpha,\,\beta \in \Phi\) and for  \(m,n\in\mathbf{Z}\).
\item
\begin{equation}
\left[\, \widehat{\mathfrak{g}}_{\frac{m}{2}\delta}\,,\, \widehat{\mathfrak{g}}_{\frac{n}{2}\delta}\,\right ]_{\widehat{\mathfrak{g}}}\,\subset \,
 \widehat{\mathfrak{g}}_{\frac{m+n}{2}\delta}\,, 
 \end{equation}
  for  \(m,n\in\mathbf{Z}\).
\end{enumerate}
\end{proposition}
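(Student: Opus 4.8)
The plan is to reduce both assertions to the single, purely formal fact that in a Lie algebra graded by the adjoint action of a commutative subalgebra the bracket of two homogeneous components lands in the component indexed by the sum of the two labels. All the genuine structural work — that \(\widehat{\mathfrak{g}}\) is a Lie algebra and that \(\widehat{\mathfrak{h}}\) acts diagonally with the scalar eigenvalues recorded in (\ref{hathad}) — has already been carried out, so only the Jacobi identity and bilinearity of the bracket remain to be used. Throughout I write \([\,\cdot\,,\,\cdot\,]\) for \([\,\cdot\,,\,\cdot\,]_{\widehat{\mathfrak{g}}}\), following the convention of the preceding proof.

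First I would establish the general lemma \([\widehat{\mathfrak{g}}_\lambda,\widehat{\mathfrak{g}}_\mu]\subset\widehat{\mathfrak{g}}_{\lambda+\mu}\) for arbitrary \(\lambda,\mu\in\widehat{\mathfrak{h}}^{\ast}\). Fix \(\xi\in\widehat{\mathfrak{g}}_\lambda\) and \(\eta\in\widehat{\mathfrak{g}}_\mu\), so that \([\hat h,\xi]=\langle\lambda,\hat h\rangle\,\xi\) and \([\hat h,\eta]=\langle\mu,\hat h\rangle\,\eta\) for every \(\hat h\in\widehat{\mathfrak{h}}\). The Jacobi identity then gives, for every such \(\hat h\),
\begin{align*}
[\hat h,[\xi,\eta]]
&=[[\hat h,\xi],\eta]+[\xi,[\hat h,\eta]]\\
&=\bigl(\langle\lambda,\hat h\rangle+\langle\mu,\hat h\rangle\bigr)\,[\xi,\eta]
=\langle\lambda+\mu,\hat h\rangle\,[\xi,\eta],
\end{align*}
the last equality being the definition of addition in \(\widehat{\mathfrak{h}}^{\ast}\). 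Since \(\hat h\) was arbitrary, \([\xi,\eta]\in\widehat{\mathfrak{g}}_{\lambda+\mu}\), and the inclusion of the bracket of the full weight spaces follows by bilinearity.

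The two displayed relations are now obtained by specialization. For assertion (1) I take \(\lambda=\frac{m}{2}\delta+\alpha\) and \(\mu=\frac{n}{2}\delta+\beta\), so that \(\lambda+\mu=\frac{m+n}{2}\delta+(\alpha+\beta)\); for assertion (2) I take \(\lambda=\frac{m}{2}\delta\) and \(\mu=\frac{n}{2}\delta\), so that \(\lambda+\mu=\frac{m+n}{2}\delta\). In each case the \(\delta\)-components add as \(\frac{m}{2}+\frac{n}{2}=\frac{m+n}{2}\), which is exactly the claimed label.

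Since the computation is formal, I do not expect a serious obstacle; the one point demanding care is that the eigenvalues \(\langle\lambda,\hat h\rangle\) occurring here are honest complex scalars (values of elements of \(\widehat{\mathfrak{h}}^{\ast}\)), unlike the \(\mathcal{L}\)-valued weights of the \(ad_{\mathcal{K}\mathfrak{h}}\)-representation of Section 4. One must therefore use the scalar root-space definition of \(\widehat{\mathfrak{g}}_\lambda\), and note that the central elements \(a_k\) and the derivation \({\rm n}\) all lie in the zero weight space — they are annihilated by every \(ad(\hat h)\), \(\hat h\in\widehat{\mathfrak{h}}\), because \([\,{\rm n}\,,\,a_k\,]=0\), \([\,{\rm n}\,,\,\phi^{+(0,0,1)}\otimes h\,]=\mathbf{n}\phi^{+(0,0,1)}\otimes h=0\), and \(\widehat{\mathfrak{h}}\) is commutative — so they never disturb the additivity of weights.
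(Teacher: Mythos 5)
Your argument is correct and is precisely the ``standard argument using the properties of Lie bracket'' that the paper invokes in a single sentence without writing out: the Jacobi identity shows the bracket of two $ad(\widehat{\mathfrak{h}})$-eigenvectors is an eigenvector for the sum of the eigenvalues, and the two displayed inclusions are instances of this. Your closing remark --- that the scalar weight-space definition must be used here (not the $\mathcal{L}$-valued weights of Section 4) and that $a_k$ and ${\rm n}$ sit in weight zero --- is exactly the right point to flag, and your write-up supplies the details the paper omits.
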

The Proposition is proved by a standard argument using the properties of Lie bracket.

 We now describe each root space  \(\,\widehat{\mathfrak{g}}_{\lambda}\,\).   
We may assume that the weight vector \(\xi\in \widehat{\mathfrak{g}}\) of each weight \(\lambda\) takes the form \(\xi=y+\sum_{\alpha\in\Phi}\varphi_\alpha\otimes x_{\alpha}\) because others do not contribute to give weight, see (\ref{hathad}).   Let \(x\in \mathfrak{g}_{\alpha}\) for \(\alpha\in \Phi\), \(\alpha\neq 0\), and let  \(\varphi\in \mathcal{L}[m]\)  for \(m\in \mathbf{Z}\), that is,  \(\varphi\) is \(m\)-homogeneous, (\ref{homog}). 
    From (\ref{hathad}) we have  
\begin{eqnarray*}
[\,\phi\otimes h , \,\varphi \otimes x\,] _{ \widehat{\mathfrak{g}}}&=&
( \phi\,\varphi) \otimes [\,h,\,x\,] \,
= \left\langle \alpha , h\right\rangle \varphi\otimes x,
\\[0.2cm]
[\,{\rm d}, \,\varphi \otimes x\,]_{ \widehat{\mathfrak{g}}}&=& (\frac{m}{2}\varphi)\otimes x ),
\end{eqnarray*}
for any \(\phi\otimes h\in \mathcal{K}\mathfrak{h}\).
That is, 
\begin{equation*}
[\,\hat h\,, \varphi \otimes x]_{ \widehat{\mathfrak{g}}} = \left\langle \frac{m}{2}\delta + \alpha \,, \,\hat h\,\right\rangle (\varphi\otimes x)\,,
\end{equation*}
for every \(\hat{h} \in \widehat{\mathfrak{h}}\).   
It implies the relation;  
\[ \mathcal{L}[ m]\otimes \mathfrak{g}_{\alpha}\,\subset \widehat{\mathfrak{g} }_{\frac{m}{2}\delta+ \alpha} \,.\]
    Now let \(y\in \mathcal{L}\mathfrak{h}\).    
 It is written by a linear combination of terms of the form  \(y^{\prime}=\phi_{i_1 i_2\cdots i_t}\otimes h_{i_1i_2\,\cdots i_t}\) with 
 \(h_j\in \mathfrak{h}\) and \(\phi_j\in \mathcal{L}[m_j\,]\), \(j=i_1,\cdots, i_t\), so that 
 \[\frac{\partial}{\partial n}y^{\prime}= (\frac12\,\sum_{k=1}^t\,m_k\,\,) \phi_{i_1 i_2\cdots i_t}\otimes h_{i_1i_2\,\cdots i_t}\,,\]
 and we find that 
 \(y^{\prime}\in \widehat{\mathfrak{g} }_{\frac m2\delta}\) with \(m=\sum_{k=1}^t\,m_k\in\mathbf{Z}\).
Hence 
\[\mathcal{L}\mathfrak{h}\,\subset\, \,\widehat{\mathfrak{g}}_{0\delta}\oplus \oplus_{m\neq 0} \,\widehat{\mathfrak{g}}_{\frac{m}{2}\delta}\,,\]
with \(\,\widehat{\mathfrak{g}}_{0\delta}=
\mathcal{L}[ 0]\otimes\mathfrak{h}\,\), and \( \,\widehat{\mathfrak{g}}_{\frac{m}{2}\delta}=\mathcal{L}[m]\otimes\mathfrak{h}\,\).

\begin{theorem}   
\begin{enumerate}
\item
\begin{eqnarray}
\widehat{\Pi} &=& \left\{ \frac{m}{2} \,\delta + \alpha\,;\quad \alpha \in \Pi\,,\,m\in\mathbf{Z}\,\right\}\nonumber \\[0.2cm]
&& \bigcup \left\{ \frac{m}{2} \,\delta\, ;\quad  m\in \mathbf{Z} \, \right\}  \,.
\end{eqnarray}
is a base of  \(\,\widehat{\Phi}\).   
\item
For \(\alpha\in \Phi\), \(\alpha\neq 0\) and \(m\in \mathbf{Z}\), we have 
 \begin{equation}
\widehat{ \mathfrak{g}}_{\frac{m}{2}\delta+ \alpha}\,=\mathcal{L}[m] \otimes_{\mathbf{C}}\mathfrak{g}_{ \alpha}\,.
 \end{equation}
 \item
 \begin{eqnarray}  
  \,\widehat{\mathfrak{g}}_{0\delta}&=&
\mathcal{L}[0]\otimes_{\mathbf{C}} \mathfrak{h}\,\supset\widehat{\mathfrak{h}},
\\[0.2cm]
 \widehat{ \mathfrak{g}}_{\frac{m}{2}\delta}&=&  \,
 \mathcal{L}[m]\otimes_{\mathbf{C}} \mathfrak{h}\,,\quad\mbox{for  \(0\neq  m\in\mathbf{Z} \) . }\,
 \end{eqnarray}
  \item
 \(\widehat{ \mathfrak{g}}\) has the following decomposition:
\begin{equation}
\widehat{ \mathfrak{g}}\,=\, 
\widehat{\mathfrak{g}}_{0\delta}\oplus\left( 
\oplus_{0\neq m\in\mathbf{Z} }\, \widehat{ \mathfrak{g}}_{\frac{m}{2}\delta}\,\right)\oplus\,\,\left(\oplus_{\alpha\in \Phi,\,  m\in\mathbf{Z} }\, 
\widehat{ \mathfrak{g}}_{\frac{m}{2}\delta+\alpha}\,\right)\,.
\end{equation}
\item
\begin{equation}
\widehat{\mathfrak{g}}_{0\delta}\oplus\left( 
\oplus_{0\neq m\in\mathbf{Z} }\, \widehat{ \mathfrak{g}}_{\frac{m}{2}\delta}\,\right)
=\mathcal{L}\mathfrak{h}=\mathcal{K}\mathfrak{h}\oplus \mathcal{J}\mathfrak{h}\,.
\end{equation}
\end{enumerate}
\end{theorem}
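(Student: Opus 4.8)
The plan is to diagonalize \(\widehat{\mathfrak{g}}\) under \(\widehat{\mathfrak{h}}\) by superimposing two commuting gradings that are already available: the \(\mathfrak{h}\)-weight grading of the previous section and the homogeneous grading of the radial derivation \(\mathbf{n}\). First I would record the action of a general \(\hat h=\phi^{+(0,0,1)}\otimes h+\sum_k s_ka_k+t\,\mathbf{n}\in\widehat{\mathfrak{h}}\) from formula (\ref{hathad}). Since \(\phi^{+(0,0,1)}=I\) is the multiplicative unit of \(\mathcal{L}\), the \(\mathfrak{h}\)-part acts on \(\varphi\otimes x\) (with \(x\in\mathfrak{g}_{\alpha}\)) simply by \(\alpha(h)\); since \(\mathbf{n}\varphi=\tfrac{m}{2}\varphi\) for \(\varphi\in\mathcal{L}[m]\) by (\ref{normalder}) and (\ref{homog}); and since each \(a_k\) is central. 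Hence
\[
ad(\hat h)(\varphi\otimes x)=\bigl(\alpha(h)+\tfrac{m}{2}t\bigr)\varphi\otimes x=\bigl\langle \tfrac{m}{2}\delta+\alpha,\hat h\bigr\rangle\,\varphi\otimes x,
\]
which immediately gives the inclusion \(\mathcal{L}[m]\otimes_{\mathbf{C}}\mathfrak{g}_{\alpha}\subset\widehat{\mathfrak{g}}_{\frac{m}{2}\delta+\alpha}\).

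Next I would dispose of the \(\Lambda_j\)-directions by a centrality argument: for any weight vector \(\xi\) of weight \(\lambda=\alpha+\sum_j n_j\Lambda_j+k_0\delta\), the relation \(0=[a_k,\xi]=\langle\lambda,a_k\rangle\xi=n_k\xi\) forces \(n_k=0\), so every genuine root lies in \(\mathrm{span}_{\mathbf{Z}}\{\alpha_i,\delta\}\). Then I combine the two decompositions. By Theorem \ref{triangulardecomp} and (\ref{weightvectors}) one has \(\mathcal{L}\mathfrak{g}=\mathcal{L}\mathfrak{h}\oplus\bigoplus_{\alpha\in\Phi}\mathcal{L}\otimes_{\mathbf{C}}\mathfrak{g}_{\alpha}\), and by (\ref{homog}) each coefficient factor splits as \(\mathcal{L}=\bigoplus_{m}\mathcal{L}[m]\). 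These are simultaneous eigenspace decompositions for the commuting operators \(ad(\phi^{+(0,0,1)}\otimes h)\) and \(ad(\mathbf{n})\), so for \(\alpha\neq0\) we get \(\mathcal{L}\otimes\mathfrak{g}_{\alpha}=\bigoplus_{m}\mathcal{L}[m]\otimes\mathfrak{g}_{\alpha}\). Since distinct pairs \((m,\alpha)\) yield distinct functionals \(\tfrac{m}{2}\delta+\alpha\) on \(\widehat{\mathfrak{h}}\) (the \(\alpha_i\) and \(\delta\) being linearly independent), the inclusion of the first paragraph becomes the equality \(\widehat{\mathfrak{g}}_{\frac{m}{2}\delta+\alpha}=\mathcal{L}[m]\otimes\mathfrak{g}_{\alpha}\), which is part 2.

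For the \(\alpha=0\) strata I would grade \(\mathcal{L}\mathfrak{h}\) by \(\mathbf{n}\): writing a typical element as an iterated bracket \(\psi_{1\cdots t}\ast h_{1\cdots t}\) with \(\psi_{j}\in\mathcal{L}[m_j]\) and applying the Leibniz rule (\ref{leibnitzbr}), one finds \(\mathbf{n}\) acts by \(\tfrac12\sum_j m_j\), so \(\mathcal{L}\mathfrak{h}=\bigoplus_m\bigl(\mathcal{L}\mathfrak{h}\cap\widehat{\mathfrak{g}}_{\frac{m}{2}\delta}\bigr)\). Adjoining the central \(a_k\) and the derivation \(\mathbf{n}\), all of weight \(0\delta\) because \([\hat h,a_k]=[\hat h,\mathbf{n}]=0\), to the \(m=0\) piece yields \(\widehat{\mathfrak{g}}_{0\delta}\supset\widehat{\mathfrak{h}}\), while the \(m\neq0\) pieces give \(\mathcal{L}[m]\otimes\mathfrak{h}\); this is part 3. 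Assembling parts 2 and 3 over all \((m,\alpha)\), and using that the two gradings jointly exhaust \(\widehat{\mathfrak{g}}\), delivers the full decomposition of part 4; part 1 then follows since every root \(\tfrac{m}{2}\delta+\alpha\) is a \(\mathbf{Z}\)-combination of the linearly independent elements of \(\widehat{\Pi}\). The main obstacle will be the \(\alpha=0\) case: one must verify that the \(\mathbf{n}\)-grading is compatible with the \emph{generated}-subalgebra structure of \(\mathcal{L}\mathfrak{h}=\mathcal{K}\mathfrak{h}\oplus\mathcal{K}^{\bot}\mathfrak{h}\), so that \(\mathcal{L}\mathfrak{h}\cap\widehat{\mathfrak{g}}_{\frac{m}{2}\delta}\) is genuinely \(\mathcal{L}[m]\otimes\mathfrak{h}\) and not something larger arising from the \(\mathcal{K}^{\bot}\mathfrak{h}\) bracket terms, together with confirming that no weight vector mixing distinct homogeneous degrees or distinct roots can occur.
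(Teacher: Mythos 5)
Your proposal is correct and follows essentially the same route as the paper: establish the inclusion \(\mathcal{L}[m]\otimes_{\mathbf{C}}\mathfrak{g}_{\alpha}\subset\widehat{\mathfrak{g}}_{\frac{m}{2}\delta+\alpha}\) from the explicit adjoint action of \(\widehat{\mathfrak{h}}\), obtain the reverse inclusion by playing this off against the already-known decompositions \(\mathcal{L}\mathfrak{g}=\mathcal{L}\mathfrak{h}\oplus\bigoplus_{\alpha}\mathcal{L}\otimes\mathfrak{g}_{\alpha}\) and \(\mathcal{L}=\bigoplus_{m}\mathcal{L}[m]\) (the paper does this by an explicit coefficient comparison for \(\xi=\psi\otimes x+\sum p_k a_k+q\,\mathbf{n}\) rather than by your eigenspace-refinement phrasing, and your centrality argument killing the \(\Lambda_j\)-components is a clean addition the paper leaves implicit), and handle the \(\alpha=0\) strata through the \(\mathbf{n}\)-grading of \(\mathcal{L}\mathfrak{h}\). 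The obstacle you flag at the end --- whether the degree-\(m\) part of the generated subalgebra \(\mathcal{L}\mathfrak{h}=\mathcal{K}\mathfrak{h}\oplus\mathcal{K}^{\bot}\mathfrak{h}\) really reduces to \(\mathcal{L}[m]\otimes_{\mathbf{C}}\mathfrak{h}\) given that bracket terms such as \([\psi_1,\psi_2]\otimes h_1h_2\) need not lie in \(\mathcal{L}\otimes\mathfrak{h}\) --- is genuine, but the paper's own proof passes over it in exactly the same way, so you are not missing anything the paper supplies.
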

\begin{proof} ~~~ 
First we prove the second assertion.    We have already proved  
\( \mathcal{L}[m]\otimes \mathfrak{g}_{\alpha}\,\subset \widehat{\mathfrak{g}}_{\frac{m}{2}\delta+ \alpha}\).    Conversely,  for \(m\in\mathbf{Z}\)  and \(\xi\in \widehat{\mathfrak{g}}_{\frac{m}{2}\delta+ \alpha}\), we shall show that \(\xi\)  has the form  \(\,\phi\otimes x\,\) with \(\phi\in  \mathcal{L}[m]\) and \(x\in \mathfrak{g}_ \alpha\,\).   
 Let \(\xi=\psi \otimes x\,+\sum\, p_k a_k+q {\rm n}\).   Then
\begin{eqnarray*}
&&[\hat h,\xi]_{ \widehat{\mathfrak{g}}} =[\,\phi^{+(0,0,1)}\otimes h+ \sum s_ka_k+t{\rm n}\,, \,\psi \otimes x\,+\sum\, p_k a_k+q {\rm n}\,]_{ \widehat{\mathfrak{g}}} 
 =\,\psi\otimes [\,h\,,\,x\,]\\[0.2cm]
 &&\qquad + \,t (\,\sum_{n\in \mathbf{Z}} \,\frac{n}{2} \psi_n  \,\otimes x\,)
\end{eqnarray*}
for any \(\hat h=\phi^{+(0,0,1)}\otimes h+\sum s_ka_k+tn\in \widehat{\mathfrak{h}}\,\), where \(\psi=\sum_n\psi_n\) is the homogeneous decomposition of \(\psi\).
From the assumption we have 
\begin{eqnarray*}
 [\,\hat h,\xi\,]_{ \widehat{\mathfrak{g}}} \,&=&\,\langle\, \frac{m}{2}\delta+ \alpha\,,\,\hat h\,\rangle\, \xi\,\\[0.2cm]
&=& < \alpha,\,h>\psi\otimes x\, +(\frac{m}{2}t+< \alpha,\,h>)(\sum p_k a_k+q{\rm n})\,\\[0.2cm]
&& \quad+\,
\frac{m}{2}t\,
  (\sum_{k} \,\psi_{k} )\otimes x.
\end{eqnarray*}
Comparing the above two equations we have \(p_k=q=0\), and $\psi_k = 0$ for all \(k\) 
except for $k= m$.      Therefore    $ \psi \in\mathcal{L}[m]$.   We also have  $[\hat h , \xi]_{ \widehat{\mathfrak{g}}}  =\psi\otimes [h,x]= \langle \alpha ,\,h\rangle \,\psi \otimes x$ for any \(\hat h=\phi^{+(0,0,1)}\otimes h+\sum s_ka_k+td \in\widehat{\mathfrak{h}}\).     Hence  \(x \) has weight \(\alpha\) and 
  \(\xi= \psi_m\otimes x \in \widehat{\mathfrak{g}}_{\frac{m}{2}\delta+ \alpha}\,\).   We have proved
  \begin{equation*}
  \widehat{\mathfrak{g}}_{\frac{m}{2}\delta+\alpha}=\mathcal{L}[m] \otimes_{\mathbf{C}} \mathfrak{g}_{ \alpha}\,.
 \end{equation*}
 Now we shall show 
 \[\mathcal{L}\mathfrak{h}\,\supset\, \,\widehat{\mathfrak{g}}_{0\delta}\oplus \oplus_{m\neq 0} \,\widehat{\mathfrak{g}}_{\frac{m}{2}\delta}\,.\]
where \(\,\widehat{\mathfrak{g}}_{0\delta}=
\mathcal{L}[ 0]\otimes_{\mathbf{C}}\mathfrak{h}\,\), and \( \,\widehat{\mathfrak{g}}_{\frac{m}{2}\delta}=\mathcal{L}[m]\otimes_{\mathbf{R}}\mathfrak{h}\,\).   The converse implication has been proved before, so both sides coincide.     Let \(\xi=\in \widehat{\mathfrak{g}}_{0\delta}\oplus \oplus_{m\neq 0} \,\widehat{\mathfrak{g}}_{\frac{m}{2}\delta}\,\) which we may assume to be the form \(\xi=y+\sum\, p_k a_k+q {\rm n}\).    It satisfies 
\[
 [\,\hat h,\xi\,]_{ \widehat{\mathfrak{g}}} \,=\,\langle\, \frac{m}{2}\delta\,,\,\hat h\,\rangle\,\xi\,,\quad \forall\widehat h\in\widehat{\mathfrak{h}}\,,\]
  for \(m=0\) or \(m\neq 0\).  From (\ref{hathad}) we find \(\xi=y\in \mathcal{L}[m]\mathfrak{h}
 \).      The above discussion yields the first and the fourth assertions.
  \end{proof}

\begin{corollary}
\[\oplus_{ \Phi\ni\alpha\neq 0}\, 
\widehat{ \mathfrak{g}}_{\frac{m}{2}\delta+\alpha}\,=\,\mathcal{L}[m] \otimes_{\mathbf{C}} \mathfrak{g}.\]
\end{corollary}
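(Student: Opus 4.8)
The plan is to read the corollary off the theorem in a single stroke, applying the weight-space identifications uniformly to all weights that share the $\delta$-level $\frac{m}{2}$ and then invoking the root-space decomposition of $\mathfrak{g}$. Fix $m\in\mathbf{Z}$. For each nonzero root $\alpha$, part~(2) of the theorem gives $\widehat{\mathfrak{g}}_{\frac{m}{2}\delta+\alpha}=\mathcal{L}[m]\otimes_{\mathbf{C}}\mathfrak{g}_{\alpha}$, while for the zero weight part~(3) gives $\widehat{\mathfrak{g}}_{\frac{m}{2}\delta}=\mathcal{L}[m]\otimes_{\mathbf{C}}\mathfrak{h}=\mathcal{L}[m]\otimes_{\mathbf{C}}\mathfrak{g}_{0}$. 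Hence every weight space at this level has the uniform form $\mathcal{L}[m]\otimes_{\mathbf{C}}\mathfrak{g}_{\alpha}$ as $\alpha$ runs over $\Phi\cup\{0\}$.

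Next I would use that $\mathcal{L}[m]\otimes_{\mathbf{C}}(-)$ is additive and commutes with the finite direct sum of the independent root spaces $\mathfrak{g}_{\alpha}\subset\mathfrak{g}$, so that the scalar factor pulls out of the sum:
\[\bigoplus_{\alpha}\widehat{\mathfrak{g}}_{\frac{m}{2}\delta+\alpha}=\mathcal{L}[m]\otimes_{\mathbf{C}}\Big(\bigoplus_{\alpha}\mathfrak{g}_{\alpha}\Big).\]
The inner direct sum is precisely the root-space decomposition $\mathfrak{g}=\mathfrak{g}_{0}\oplus\bigoplus_{\alpha\in\Phi}\mathfrak{g}_{\alpha}$ of the simple Lie algebra, with $\mathfrak{g}_{0}=\mathfrak{h}$; therefore $\bigoplus_{\alpha}\mathfrak{g}_{\alpha}=\mathfrak{g}$ and the right-hand side equals $\mathcal{L}[m]\otimes_{\mathbf{C}}\mathfrak{g}$, which is the asserted identity.

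The only load-bearing point --- and the step I expect to require care --- is the range of the index $\alpha$. The strictly nonzero roots by themselves contribute only $\mathcal{L}[m]\otimes_{\mathbf{C}}(\mathfrak{e}\oplus\mathfrak{f})$, a proper summand of $\mathcal{L}[m]\otimes_{\mathbf{C}}\mathfrak{g}$; it is exactly the imaginary-root weight space $\widehat{\mathfrak{g}}_{\frac{m}{2}\delta}=\mathcal{L}[m]\otimes_{\mathbf{C}}\mathfrak{h}$ furnished by part~(3) that supplies the missing Cartan factor $\mathfrak{g}_{0}$ and lets the inner sum reassemble to all of $\mathfrak{g}$. Thus the direct sum in the statement must be read with $\alpha$ ranging over $\Phi\cup\{0\}$; granting this, the corollary is a purely formal consequence of the theorem together with the additivity of $\mathcal{L}[m]\otimes_{\mathbf{C}}(-)$, and no further computation is required.
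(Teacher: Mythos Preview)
Your derivation is exactly what the paper intends: the corollary is stated without proof immediately after the theorem, and the only possible argument is the one you give --- sum the identifications $\widehat{\mathfrak{g}}_{\frac{m}{2}\delta+\alpha}=\mathcal{L}[m]\otimes_{\mathbf{C}}\mathfrak{g}_\alpha$ from parts (2) and (3) over all $\alpha$ and use the root-space decomposition of $\mathfrak{g}$. Your flag on the index range is also well taken: as literally printed, the sum over $\alpha\neq 0$ yields only $\mathcal{L}[m]\otimes_{\mathbf{C}}(\mathfrak{e}\oplus\mathfrak{f})$, and one must include the $\alpha=0$ term $\widehat{\mathfrak{g}}_{\frac{m}{2}\delta}=\mathcal{L}[m]\otimes_{\mathbf{C}}\mathfrak{h}$ from part (3) to recover the full $\mathcal{L}[m]\otimes_{\mathbf{C}}\mathfrak{g}$; this is a slip in the statement rather than a gap in your reasoning.
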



\subsection{ Standard invariant bilinear form on \(\widehat{\mathfrak{g}}\)}

 Let  \((\cdot\,\vert\cdot\,)\) be the standard invariant form on  \(\widehat{\mathfrak{g}}\)  and let \(\theta\) be the highest root of the root system \(\Phi\).   
Normalize the form \((\cdot\,\vert\cdot\,)\) on \(\mathfrak{g}\) by the conditionn
\((\theta\vert\theta)=2\) and extend it to the whole \(\widehat{\mathfrak{g}}\) by 
\begin{eqnarray*}
(\phi_1\otimes x_1\vert\,\phi_2\otimes x_2) &=&\,(\phi_1\vert \phi_2)\,(x_1\vert x_2)\,,\quad x_i\in \mathfrak{g},\, \phi_i\in\mathcal{L}\,,\quad  i=1,2\,\\[0.2cm]
(\,\,\mathbf{H}{\rm c} +\mathbf{C} {\rm d}\,\vert \, \mathcal{L}\mathfrak{g}\, )&=&0\,,\quad 
({\rm c} \vert {\rm c} )=({\rm d}\vert {\rm d})=0\,,\quad ({\rm c} \vert {\rm d})=1\,,
\end{eqnarray*}
where we have defined  
\[ (\phi\,\vert\psi\,)=\,q Res(\,\phi\cdot \psi)\,.\]
It extends the invariant bilinear form (\ref{invariantform} ) of  \( \mathcal{L}\mathfrak{g}\).   
Here we shall check only the following invariance property.   Since the rests are easy to prove.

\begin{lemma}
\begin{equation}\label{d-action}
 (\,[\,{\rm d}\,, \phi\otimes x\,]\,\vert\, \psi\otimes y\,)\,=\,(\,{\rm d}\,\vert\,[\phi\otimes x\,,\,\psi\otimes y\,]\,)\,
 \end{equation}
 for \(\phi,\,\psi\,\in \mathcal{L}\).
 \end{lemma}
\begin{proof}
The left hand side of the above equation is 
\begin{eqnarray*}
 (\,[{\rm d}\,,\,\phi\otimes x]\,\vert \,\psi\otimes y\,)
 &=&
(\,\frac{\partial}{\partial n}\phi\,\otimes x\,\vert\,\psi\otimes y\,) \,=
(\,\frac{\partial}{\partial n}\phi\,\vert\, \psi )\,(x\vert\, y)\\[0.2cm]
&=&  qRes(\frac{\partial}{\partial n} \phi\cdot\psi\,)\,(x\vert\, y)=\frac{1}{2}\left( qRes(\frac{\partial}{\partial n} \phi\cdot\psi\,)-qRes(\frac{\partial}{\partial n} \psi\cdot\phi)\right)(x\vert y)\,.
\end{eqnarray*}
Here we used the fact: \(q\,Res\,\frac{\partial}{\partial n}(\psi\cdot\phi)=0\).
The right hand side of (\ref{d-action}) is equal to
\begin{equation*}
\,(\,{\rm d}\,\vert\,[\phi\otimes x\,,\,\psi\otimes y\,]^{\rm c}\,)\,
=\,({\rm d}\,\vert \,[\phi\otimes x, \psi\otimes y]\,+\,(x\vert y)\,A(\phi,\psi){\rm c}\,)=\,A(\phi,\psi)\, (x\vert y)\,.
\end{equation*}



Proposition\ref{resDandn} implies 
 \[qRes(\,\frac{\partial}{\partial n}\phi\,\cdot\, \psi\,)=\,qRes(\Do\phi\,\cdot\, \psi\,)\,\, \mbox{ and }\,qRes(\,\frac{\partial}{\partial n}\psi\,\cdot\, \phi\,)=\,qRes(\Do\psi\,\vert\, \phi\,)\,.\]
 Hence 
\[ qRes(\frac{\partial}{\partial n} \phi\cdot\psi\,)-\,qRes(\frac{\partial}{\partial n} \psi\cdot\phi)=qRes(\Do\phi\,\cdot \psi)\,-\,qRes(\Do\psi\,\vert\phi)
\,=A(\phi,\psi).\]
\end{proof}

 \subsection{ Chevalley generators of \(\,\widehat{\mathfrak{g}}\)}
 
We consider the root spaces \(\mathfrak{g}_{\theta}\) and \(\mathfrak{g}_{-\theta}\) where \(\theta\) is the highest root of \(\mathfrak{g}\).   We have \(\dim \mathfrak{g}_{\theta}=\dim \mathfrak{g}_{-\theta}=1\), and the bilinear form \((\cdot\vert\cdot)\) restricted on \(\mathfrak{g}_{\theta}\times \mathfrak{g}_{-\theta}\), that is, the form restricted on 
\((\mathcal{L}\mathfrak{g})_{\theta}\times (\mathcal{L}\mathfrak{g})_{-\theta}\), is non-degenerate.   
Let \(\omega^0\) be the Chevalley involution of \(\mathfrak{g}\).    

By the natural embedding of \(\mathfrak{g}\) in \(\widehat{\mathfrak{g}}\) we have the vectors 
\begin{eqnarray}
h_i&=&\phi^{+(0,0,1)}\otimes h_i\,\in \widehat{\mathfrak{h}},\,\nonumber\\[0.2cm]
e_i&=&\phi^{+(0,0,1)}\otimes e_i\,\in \widehat{\mathfrak{g}}_{0\delta+\alpha_i},\quad f_i=\phi^{+(0,0,1)}\otimes f_i\,\in \widehat{\mathfrak{g}}_{0\delta-\alpha_i},\qquad i=1,\cdots,l\,.
\nonumber\end{eqnarray}
Then 
\begin{eqnarray}
\left[e_i\,,f_j\,\right]_{ \widehat{\mathfrak{g}}} &=&\,\delta_{ij}\,h_i\,,\nonumber\\[0.2cm]
\left[h_i\,,e_j\,\right ]_{ \widehat{\mathfrak{g}}}&=&\,a_{ij}\,e_j,\quad 
\left[h_i\,,f_j\,\right]_{ \widehat{\mathfrak{g}}} =\,- a_{ij}\,f_j\,,\quad 1\leq i\,,\,j\leq l .
\end{eqnarray}
We have obtained a part of generators of \(\widehat{\mathfrak{g}}\) that come naturally from \(\mathfrak{g}\).   We want to augment these generators to the Chevalley generators of  \(\widehat{\mathfrak{g}}\).    
We take the following generators of  the algebra \(\mathcal{L}\):
\begin{eqnarray}
 I\,&=\,\phi^{+(0,0,1)}\,=\,\left(\begin{array}{c} 1\\ 0\end{array}
 \right)\,, \quad 
 J\,&= \,\phi^{ +(0,0,0)}\,=\,\left(\begin{array}{c}  0\\ -1\end{array}\right)\,,\\[0.2cm]
  \kappa\,&=\,\phi^{+(1,0,1)}\,=\,\left(\begin{array}{c} z_2\\-\overline{z}_1\end{array}
 \right)\,, \quad 
 \lambda\,&= \,\phi^{ -(0,0,0)}\,=\,\frac{1}{\vert z\vert^4}\,\left(\begin{array}{c}  z_2\\ \overline{z}_1\end{array}\right)\lvert_{|z|=1}\,,
 \end{eqnarray}

  We put
\begin{eqnarray*}
\kappa_{\ast}&=&\,-\,\frac{1}{\sqrt{2}}\phi^{+(1,1,2)}-\,\frac{1}{2}\phi^{+(1,0,1)}\,+\,\frac{1}{2}\,\phi^{ -(0,0,0)}\,\\[0.2cm]
&=& \,
\left(\begin{array}{c}\overline{z}_2\\0\end{array}\right)-\frac{1}{2}\left(\begin{array}{c}z_2\\ -\overline{z}_1\end{array}\right)
+\frac{1}{2|z|^4}\left(\begin{array}{c}z_2\\ \overline{z}_1\end{array}\right)\lvert_{|z|=1}\,=\left(\begin{array}{c}\overline{z}_2\\ \overline{z}_1\end{array}\right)\,.\\[0.2cm]
\lambda_{\ast}&=&\,\,\frac{1}{\sqrt{2}}\phi^{+(1,0,2)}\,+\,\frac{1}{2}\phi^{+(1,1,1)}\,+\,\frac{1}{2}\,\phi^{ -(0,0,1)}\,\\[0.2cm]
&=&\,\left(\begin{array}{c}\overline{z}_1\\0\end{array}\right)+\frac{1}{2}\left(\begin{array}{c}z_1\\ \overline{z}_2\end{array}\right)
+\frac{1}{2|z|^4}\left(\begin{array}{c}-z_1\\ \overline{z}_2\end{array}\right)\lvert_{|z|=1}\,=\left(\begin{array}{c}\overline{z}_1\\ \overline{z}_2\end{array}\right)\,.
\end{eqnarray*}
It holds that 
\[ \kappa\,\in \mathcal{L}[1]\,,\quad \lambda\,\in \mathcal{L}[-3]\,,\quad 
\kappa_{\ast}\,,\,\lambda_{\ast}\,\in \mathcal{L}[1]\oplus\mathcal{L}[-3]\,
\]

\begin{lemma}~~
\begin{enumerate}
\item
\begin{eqnarray}
\kappa\,\cdot \kappa_{\ast}\,&=\,\kappa_{\ast}\,\cdot \kappa \,=\left( \begin{array}{c} 1\\ 0 \end{array}\right) \,,\\[0.2cm]
\,\lambda_{\ast}\,\cdot\lambda \,&=\,\left( \begin{array}{c} 0\\ 1 \end{array}\right) \,.
\end{eqnarray}
\item
\begin{equation}
\,A(\kappa,\kappa_{\ast})\,=\,\left(\begin{array}{c}1\\ 0\end{array}\right)\,,\qquad 
A(\lambda,\lambda_{\ast})=\,\,\left(\begin{array}{c}0\\ 1\end{array}\right)\,\,.
\end{equation}
\end{enumerate}
\end{lemma}
\begin{proof}~~~
We have the following equations
\begin{eqnarray*}
\Do\,\kappa\,&=&\,\frac{1}{2}\kappa\,,\qquad
\Do\,\lambda\,
=\,-\frac{3}{2}\,\lambda\,,\\[0.2cm]
 \Do\,\kappa_{\ast}\,&=&\,
\left(\begin{array}{c}-\frac12\overline{z}_2-z_2\\ -\frac12\overline{z}_1\end{array}\right)\,
=\,\frac{1}{2}\kappa_{\ast}\,-\lambda\,,\qquad
 \Do\,\lambda_{\ast}\,=\,\frac{1}{2}
\left(\begin{array}{c}\overline z_1\\ \overline{z}_2\end{array}\right)\,+\,\left(\begin{array}{c} z_1\\ - \overline{z}_2\end{array}\right)
=\,\frac{1}{2}\lambda_{\ast}\,-\,\phi^{-(0,0,1)}.
\end{eqnarray*}
By virtue of these equations we have 
\[\,\Do\kappa\cdot\kappa_{\ast}\,-\,\Do\kappa_{\ast}\cdot\kappa\,=\,\lambda\cdot\kappa\,=\,\left(\begin{array}{c} z_2^2+\vert z_1\vert^2\\ \overline z_1(z_2-\overline z_2)\end{array}\right)\,.\]
The quarternion residue of the last term is \(\left(\begin{array}{c} 1 \\ 0 \end{array}\right)\), so \(A(\kappa\,,\,\kappa_{\ast}\,)=\left(\begin{array}{c} 1 \\ 0 \end{array}\right)\).

Similarly we have
\[
\Do\lambda\cdot\lambda_{\ast}\,-\,\Do\lambda_{\ast}\cdot\lambda\,=\,
\,\left(\begin{array}{c} - z_1(\overline z_2+ z_2)  \\  -\overline z_2^2+\vert z_2\vert^2-\frac12    \end{array}\right)\,. \]
The quarternion residue of the last term is \(\left(\begin{array}{c} 0 \\ 1 \end{array}\right)\), so \(A(\lambda\,,\,\lambda_{\ast}\,)=\left(\begin{array}{c} 0 \\ 1 \end{array}\right)\).

\end{proof}

Let \(\theta\) be the highest root of \(\mathfrak{g}\) and suppose that \(e_\theta\in\mathfrak{g}_{-\theta}\) and \(f_\theta\in\mathfrak{g}_{\theta}\) satisfy the relations \([e_\theta\,,\,f_{\theta}]\,=\,h_\theta\) and \((e_\theta\vert f_\theta)=1\).    We introduce the following vectors of \(\,\widehat{\mathfrak{g}}\,\);
\begin{align}
 f_{J}&=J\otimes f_{\theta}\,\in \widehat{\mathfrak{g}}_{0\delta+\theta}\,,\quad &
  e_{J}&=(-J)\otimes e_{\theta} \,\in \widehat{\mathfrak{g}}_{0\delta-\theta}\,,\\[0.2cm]
   f_{\kappa}&=\kappa\otimes f_{\theta}\,\in \widehat{\mathfrak{g}}_{\frac12\delta+\theta}\,,\quad &
  e_{\kappa}&=\kappa_{\ast} \otimes e_{\theta} \,\in \widehat{\mathfrak{g}}_{-\frac32\delta-\theta}\oplus \widehat{\mathfrak{g}}_{\frac12\delta-\theta}\,,\\[0.2cm]
  f_{\lambda}&=\lambda \otimes f_{\theta}\,\in \widehat{\mathfrak{g}}_{-\frac32\delta+\theta}\,,\quad &
  e_{\lambda}&=\lambda_{\ast}\otimes e_{\theta} \,\in \widehat{\mathfrak{g}}_{-\frac{3}{2}\delta-\theta}\oplus \widehat{\mathfrak{g}}_{\frac12\delta-\theta}\,.
\end{align}
Then we have the generators of \(\mathcal{L}\mathfrak{g}\oplus\,\mathbf{H}{\rm c}\)  that are given by the following triples:
\begin{eqnarray}
&& \left(\,\widehat{e}_i,\widehat{f}_i,h_i \right) \quad  i=1,2,\cdots,l,\nonumber\\[0.2cm]
&&\left( \widehat{e}_{\lambda}, \widehat{f}_{\lambda},h_{\theta}\right),\quad 
\left( \widehat{e}_{\kappa}, \widehat{f}_{\kappa},h_{\theta}\,\right),\quad
 \,\left( \widehat{e}_{J}, \widehat{f}_{J},h_{\theta}\right)\, \,.
\end{eqnarray}
These triples satisfy the following relations.
\begin{proposition}~~~
\begin{enumerate}
\item
\begin{equation}
\left[\,e_{\pi}\,,\,f_i\,\right]_{\widehat{\mathfrak{g}}}=\,
\left[\,f_{\pi}\,,\,e_i\,\right]_{\widehat{\mathfrak{g}}} =0\,,\quad\mbox{for } \,1\leq i\leq l ,\,\mbox{ and }\, \pi=J,\,\kappa,\,\lambda\,.\label{i}
\end{equation}
\item
\begin{equation}
\left[\,e_{J}\,,\,f_J\,\right]_{\widehat{\mathfrak{g}}}=\,\widehat  h_\theta\,,
\end{equation}
\item
\begin{equation}\quad
\left[\,e_{\lambda}\,,\,f_{\lambda}\,\right]_{\widehat{\mathfrak{g}}}=\sqrt{-1}\,\widehat  h_\theta\,-{\rm c} ,\quad
\left[\,e_{\kappa}\,,\,f_{\kappa}\,\right]_{\widehat{\mathfrak{g}}}=\sqrt{-1}\,\widehat  h_\theta\,-{\rm c}\,.
\end{equation}
\end{enumerate}
\end{proposition}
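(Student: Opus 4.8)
My plan is to prove all three assertions by direct computation from the bracket (\ref{braqgl})--(\ref{derbra}) of \(\widehat{\mathfrak{g}}\). Since each generator is a pure tensor \(\phi\otimes x\) with \(\phi\in\mathcal{L}\) and \(x\) a root vector, every bracket \([\phi\otimes x,\psi\otimes y]_{\widehat{\mathfrak{g}}}\) splits as a \emph{current part} \((\phi\cdot\psi)\otimes xy-(\psi\cdot\phi)\otimes yx\) plus a \emph{central part} \((x\vert y)\sum_{k=0}^2 c_k(\phi,\psi)a_k\). The whole proof then reduces to three inputs: the spinor product (\ref{spinormultip}), the bracket relations of \(\mathfrak{g}\) together with the maximality of the root \(\theta\), and the cocycle values \(c_0(\kappa,\kappa_*)=c_0(\lambda,\lambda_*)=-1\), \(c_1=c_2=0\) from the preceding Lemma. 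I would take the three items in increasing order of difficulty.

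For (1) I first record the products of the constant generators: \(I\) is a two-sided unit, \((-J)\cdot I=I\cdot(-J)=-J\), and more generally \(\phi\cdot I=I\cdot\phi=\phi\). Hence for \(\pi=J,\kappa,\lambda\) the two spinor products coincide and the current part collapses to a single term of the form \((\text{spinor})\otimes[\,e_\theta,\,\cdot\,]\) or \((\text{spinor})\otimes[\,f_\theta,\,\cdot\,]\), the remaining \(\mathfrak{g}\)-bracket being that of \(e_\theta\) (resp.\ \(f_\theta\)) against the relevant simple-root vector. This \(\mathfrak{g}\)-bracket vanishes precisely because \(\theta\) is the highest root, so that \(\theta+\alpha_i\) (resp.\ \(-\theta-\alpha_i\)) is never a root; this is the only place the maximality of \(\theta\) enters. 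The central part vanishes as well: for \(\pi=J\) the spinors \(I,J\) are constant and are killed by every \(\Theta_k\), while for \(\pi=\kappa,\lambda\) the factor \(I\) together with (\ref{tracetheta}) forces \(c_k(\,\cdot\,,I)=0\), and in any case \((e_\theta\vert f_i)=(f_\theta\vert e_i)=0\) by orthogonality of root spaces.

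For (2) the computation is clean: \((-J)\cdot J=J\cdot(-J)=I\), so the current part is \(I\otimes(e_\theta f_\theta-f_\theta e_\theta)=I\otimes h_\theta=\widehat{h}_\theta\), and the central part vanishes because \(J\) is constant. For (3) I would decompose the matrix products as \(e_\theta f_\theta=\tfrac12 h_\theta+\tfrac12\{e_\theta,f_\theta\}\) and \(f_\theta e_\theta=-\tfrac12 h_\theta+\tfrac12\{e_\theta,f_\theta\}\), so that the current part of \([e_\lambda,f_\lambda]\) becomes \(\tfrac12(\lambda_*\cdot\lambda+\lambda\cdot\lambda_*)\otimes h_\theta+\tfrac12(\lambda_*\cdot\lambda-\lambda\cdot\lambda_*)\otimes\{e_\theta,f_\theta\}\), and similarly for \(\kappa\). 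On \(S^3\) one computes directly from (\ref{spinormultip}) that \(\lambda\cdot\lambda_*=\kappa\cdot\kappa_*=\sqrt{-1}\,I\), which already produces the coefficient \(\sqrt{-1}\) of \(\widehat{h}_\theta\); the central term \(-a_0\) is then read off from \(c_0(\lambda,\lambda_*)=c_0(\kappa,\kappa_*)=-1\) in the Lemma, using \((e_\theta\vert f_\theta)=1\).

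The main obstacle is exactly the symmetric remainder in (3): the products \(\lambda_*\cdot\lambda\) and \(\kappa_*\cdot\kappa\) are \emph{not} constant (one finds \(\lambda_*\cdot\lambda=\sqrt{-1}\binom{|z_2|^2-|z_1|^2}{-2\,z_2\overline z_1}\) on \(S^3\)), so both the anticommutator contribution \(\tfrac12(\lambda_*\cdot\lambda-\lambda\cdot\lambda_*)\otimes\{e_\theta,f_\theta\}\) and the non-constant part of \(\tfrac12(\lambda_*\cdot\lambda+\lambda\cdot\lambda_*)\otimes h_\theta\) must be shown to disappear, leaving only \(\sqrt{-1}\,\widehat{h}_\theta\). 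I expect this to be the delicate step: it should follow by rewriting the non-constant spinor coefficients on \(|z|=1\) and invoking the decomposition \(\mathcal{L}=\mathcal{K}\oplus\mathcal{K}^{\bot}\) together with the weight-space identity \((\mathcal{L}\mathfrak{g})_{0}=\mathcal{L}\mathfrak{h}\), so that the surviving terms are forced into \(\widehat{\mathfrak{g}}_{0\delta}\) and pinned down to \(\sqrt{-1}\,\widehat{h}_\theta\) by comparing weights. Making the signs and the normalization of the central term consistent with the Lemma is the final bookkeeping point.
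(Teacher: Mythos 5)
The paper states this proposition without proof, so the only question is whether your computation actually closes; it does for item (2) (your identities $(-J)\cdot J=J\cdot(-J)=I$ and $\lambda\cdot\lambda_{*}=\kappa\cdot\kappa_{*}=\sqrt{-1}\,I$ on $|z|=1$ are correct), but two of your steps fail as written. In item (1) you invoke the maximality of $\theta$ to rule out ``$\theta+\alpha_i$'', but that is not the weight that occurs: $[e_{\pi},f_i]_{\widehat{\mathfrak{g}}}$ pairs $e_{\theta}$ against $f_i\in\mathfrak{g}_{-\alpha_i}$, so its current part is proportional to $[e_{\theta},f_i]\in\mathfrak{g}_{\theta-\alpha_i}$, and $\theta-\alpha_i$ is very often a root --- already for $\mathfrak{g}=\mathfrak{sl}_2$ one gets $[e_J,f_1]_{\widehat{\mathfrak{g}}}=(-J)\otimes[e_{\theta},f_1]=(-J)\otimes h_1\neq 0$. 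The highest-root argument kills $[e_{\theta},e_i]$ and $[f_{\theta},f_i]$, i.e.\ it would apply if the new $e$-generators were built from $f_{\theta}$ (as in the usual affine construction); with the definitions as printed it does not give the claimed vanishing, so your proof of item (1) does not go through.

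The step you yourself flag as delicate in item (3) is likewise a genuine gap, not bookkeeping. Your own decomposition shows the current part of $[e_{\lambda},f_{\lambda}]_{\widehat{\mathfrak{g}}}$ equals $\sqrt{-1}\,I\otimes h_{\theta}+\bigl(\lambda_{*}\cdot\lambda-\sqrt{-1}\,I\bigr)\otimes e_{\theta}f_{\theta}$, and on $|z|=1$ one computes $\lambda_{*}\cdot\lambda-\sqrt{-1}\,I=-2\sqrt{-1}\left(\begin{smallmatrix}|z_1|^2\\ \overline z_1 z_2\end{smallmatrix}\right)$ (similarly for $\kappa$). This is a fixed nonzero spinor tensored with $e_{\theta}f_{\theta}\neq 0$; no appeal to the weight-space decomposition or to $\mathcal{L}=\mathcal{K}\oplus\mathcal{K}^{\bot}$ can make a nonzero vector of $\widehat{\mathfrak{g}}$ vanish, so the remainder cannot be ``forced into $\widehat{\mathfrak{g}}_{0\delta}$ and pinned down to $\sqrt{-1}\,\widehat h_{\theta}$'' by comparing weights. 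There is also a sign slip in the central term: the cocycle appearing in $[\lambda_{*}\otimes e_{\theta},\lambda\otimes f_{\theta}]_{\widehat{\mathfrak{g}}}$ is $c_0(\lambda_{*},\lambda)=-c_0(\lambda,\lambda_{*})=+1$, so reading off $-a_0$ from $c_0(\lambda,\lambda_{*})=-1$ uses the arguments of the antisymmetric cocycle in the wrong order. In short, items (1) and (3) are not established by your argument, and the obstructions are the concrete nonvanishing terms exhibited above rather than anything a weight argument can absorb.
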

Adding the element \(n\) to these generators of \(\mathcal{L}\mathfrak{g}\oplus\,\mathbf{H}{\rm c}\)  we have obtained the Chevalley generators of 
\(\widehat{\mathfrak{g}}\).

\end{document}